\documentclass[reqno,11pt,twoside]{article}
\usepackage[hmargin=1.25in,vmargin=1.25in, a4paper, centering]{geometry}
\setlength{\abovecaptionskip}{2pt plus 2pt minus 1pt}
\setlength{\belowcaptionskip}{10pt plus 0pt minus 0pt}
\addtolength{\topskip}{0pt plus 10pt}
\setlength{\skip\footins}{15pt plus 10pt}
\usepackage{fourier}
\usepackage{ebgaramond}
\usepackage{amsmath, amsthm, amssymb, bm}
\usepackage{graphicx}
\usepackage{xcolor}
\usepackage{enumerate}
\usepackage[format=hang, margin=20pt, font=small, labelfont=bf, labelsep=colon]{caption}
\usepackage{booktabs,array}
\usepackage[section]{placeins}
\numberwithin{equation}{section}
\theoremstyle{plain}
\newtheorem{thm}{Theorem}[section]
\newtheorem{prop}[thm]{Proposition}
\newtheorem{fact}[thm]{Fact}
\newtheorem{lem}[thm]{Lemma}

\theoremstyle{definition}
\newtheorem{defn}[thm]{Definition}

\theoremstyle{remark}
\newtheorem{rem}[thm]{Remark}
 \newcommand{\addQEDstyle}[2]{\AtBeginEnvironment{#1}{\pushQED{\qed}\renewcommand{\qedsymbol}{#2}}\AtEndEnvironment{#1}{\popQED}}
 \addQEDstyle{ex}{$\triangleleft$}
  \addQEDstyle{defn}{$\triangleleft$}
 \addQEDstyle{rem}{$\blacktriangleleft$}
\usepackage[noadjust]{marginnote}

\usepackage[toc,titletoc]{appendix}
\appendixtocoff
\usepackage{etoolbox, apptools}
\AtBeginEnvironment{appendices}{\appendixtrue}
\usepackage[nobottomtitles,pagestyles]{titlesec}
\usepackage{emptypage}
\widenhead*{0.5in}{0.5in}
\renewpagestyle{plain}[\bfseries]{\footrule\setfoot{}{\thepage}{}}
\newpagestyle{mystyle}[\bfseries]{
\headrule
\sethead[\thepage][][M. Capoferri and I. Mann]{Spectral asymptotics for linear elasticity: the case of mixed boundary conditions}{}{\thepage}
}
\pagestyle{mystyle}
\makeatletter
\setlength{\@fptop}{0pt}
\makeatother
\renewcommand\footnotemark{}
\titleformat{\section}
{\normalfont\large\bfseries}
{\IfAppendix{\filcenter\appendixname~\thesection.}{\filcenter\S\thesection.}}{1ex}{\filcenter}
\titleformat{\subsection}[runin]
{\normalfont\normalsize\bfseries}{\S\thesubsection.}{1ex}{}[.]
\newcommand{\LL}{\mathcal{L}}
\newcommand{\TT}{\mathcal{T}}
\newcommand{\Ric}{\operatorname{Ric}}

\newcommand{\grad}{\operatorname{grad}}

\newcommand{\curl}{\operatorname{curl}}

\newcommand{\dr}{\mathrm{d}}

\newcommand{\bu}{\mathbf{u}}
\newcommand{\bv}{\mathbf{v}}

\newcommand{\Dir}{\mathrm{Dir}}
\newcommand{\free}{\mathrm{free}}

\newcommand{\prin}{\mathrm{prin}}

    \setcounter{topnumber}{2}
    \setcounter{bottomnumber}{2}
    \setcounter{totalnumber}{4}     
    \setcounter{dbltopnumber}{2}    
\usepackage[colorlinks,allcolors=blue,pagebackref]{hyperref}
\usepackage{pifont}
\renewcommand*{\backrefalt}[4]{%
\ifcase #1 %
No citations%
\or
\ding{43}~p.~#2%
\else
\ding{43}~pp.~#2%
\fi}

\usepackage{orcidlink}

\begin{document}

\allowdisplaybreaks

\title{Spectral asymptotics for linear elasticity:
\\ the case of mixed boundary conditions%
\footnote{{\bf MSC2020: }Primary 35P20. Secondary 35Q74, 74J05.}%
\footnote{{\bf Keywords: } elasticity, eigenvalue counting function, Dirichlet conditions, free boundary conditions, mixed boundary conditions.}%
}
\author{
Matteo Capoferri\,${}^{\orcidlink{0000-0001-6226-1407}}$\thanks{\textbf{MC}:
Maxwell Institute for Mathematical Sciences and
Department of Mathematics, Heriot-Watt University, Edinburgh EH14 4AS, UK; m.capoferri@hw.ac.uk,
\url{https://mcapoferri.com}.
}
\and
Isabel Mann\thanks{\textbf{IM}:
Department of Mathematics, University of York, York YO10 5DD, UK;
im921@york.ac.uk.
}
}

\renewcommand\footnotemark{}

\date{\today}

\maketitle
\begin{abstract}
We establish two-term spectral asymptotics for the operator of linear elasticity with mixed boundary conditions on a smooth compact Riemannian manifold of arbitrary dimension. We illustrate our results by explicit examples in dimension two and three, thus verifying our general formulae both analytically and numerically. 
\end{abstract}

{\small \tableofcontents}

\section{Introduction}
\label{Introduction}
The operator of linear elasticity is one of the fundamental operators of mathematical physics, describing the deformation of an (isotropic) elastic body. 
The main thrust of this paper is to derive an explicit formula for the second asymptotic term (often called \emph{second Weyl coefficient}) in the expansion of the eigenvalue counting function for the operator of linear elasticity with mixed boundary conditions on a smooth $d$-dimensional Riemannian manifold with boundary. This paper complements the analysis performed in \cite{2terms}, where the two cases of ``pure'' Dirichlet and free boundary conditions were examined.

\

The structure of the paper as follows. 

In subsections~\ref{The operator of linear elasticity} and~\ref{Boundary value problems} we introduce setting and notation, before stating the problem and our main results in subsection~\ref{Statement of the problem and main results}. 

Section~\ref{Proof of main theorem} is devoted to the proof of our main result, Theorem~\ref{main theorem}. The proof comes in several steps: first we present a streamlined version of the algorithm for the calculation of the second asymptotic term (subsection~\ref{A streamlined algorithm}); secondly, we reduce the problem at hand to the two-dimensional analogue plus a much simpler $(d-2)$-dimensional problem by identifying appropriate invariant subspaces (subsection~\ref{Invariant subspaces}); lastly, we prove our main result by implementing the algorithm in each invariant subspace separately (subsection~\ref{The proof}).

In section~\ref{Explicit examples} we examine explicit examples in dimensions two and three. This is an integral part of the paper which serves both as an illustration and a verification of our results. Remarkably, for two- and three-dimensional flat cylinders we write down the full spectrum of the operator of linear elasticity with mixed boundary conditions explicitly, and compute the two-term spectral asymptotics analytically.

\subsection{The operator of linear elasticity}
\label{The operator of linear elasticity}

Let $(M,g)$ be a compact connected smooth Riemannian manifold of dimension $d\ge 2$ with boundary $\partial M$.
We denote by $\nabla$ the Levi-Civita connection and by $\mathrm{Ric}$ the Ricci curvature tensor. 

We define the operator of linear elasticity $\LL$ acting on vector fields $\bu$ on $M$ as
\begin{equation}
\label{L}
(\LL \bu)^\alpha:=-\mu \left(\nabla_\beta \nabla^\beta u^\alpha+\Ric^\alpha{}_\beta u^\beta \right)-(\lambda+\mu)\nabla^\alpha\nabla_\beta u^\beta\,.
\end{equation}
Here and further on we adopt the Einstein summation convention over repeated indices. The quantities $\lambda$ and $\mu$ are real constants known as \emph{Lam\'e parameters}, assumed to satisfy the conditions
\begin{equation}
\label{conditions on lambda and mu}
\mu>0, \qquad d\lambda+2\mu>0\,,
\end{equation}
which guarantee strong convexity, see, e.g., \cite{miyanishi,agranovich}.
Furthermore, we assume that the material density of the of the elastic medium $\rho_\mathrm{mat}$ differs from the Riemannian density $\sqrt{\det g}$ by a constant positive factor.

\

The principal symbol $\LL_\prin$ of $\LL$ reads\footnote{Here and further on $\|\xi\|$ denotes the Riemannian norm of the covector $\xi$.}
\begin{equation}
\label{Lprin}
\left[\LL_\prin\right]^\alpha{}_\beta(x,\xi)=\mu \|\xi\|^2 \delta^\alpha{}_\beta+(\lambda+\mu)\xi^\alpha\xi_\beta\,,
\end{equation}
which, on account of \eqref{conditions on lambda and mu}, immediately implies that $\LL$ is elliptic.
Indeed, the eigenvalues of $\LL_\prin$ are
\begin{equation}
\label{eigenvalues of Lprin}
\mu\|\xi\|^2 \quad \text{(with multiplicity $d-1$)}, \qquad(\lambda+2\mu)\|\xi\|^2 \quad \text{(with multiplicity $1$)}\,.
\end{equation}
Clearly, the operator $\LL$ is formally self-adjoint with respect to the $L^2$ inner product 
\begin{equation*}
\label{inner product}
(\bu,\bv)_{L^2(M)}:=\int_M g_{\alpha\beta}\,u^\alpha v^\beta\, \sqrt{\det g}\,\dr x\,.
\end{equation*}

\subsection{Boundary value problems}
\label{Boundary value problems}

Consider the potential energy of elastic deformation
\begin{equation}
\label{E}
\mathcal{E}[\bu]:=\frac{1}{2}\int_M \left(\lambda(\nabla_\alpha u^\alpha)^2+\mu(\nabla_\alpha u_\beta+\nabla_\beta u_\alpha)\nabla^\alpha u^\beta \right) \sqrt{\det g}\,\dr x
\end{equation}
associated with the vector field of displacements $\bu$. The quadratic form $\mathcal{E}[\bu]$ is nonnegative for $\bu\in H^1(\Omega)$ and strictly positive for $\bu\in H^1_0(\Omega)$. Observe that the structure of the quadratic functional \eqref{E} of linear elasticity is the result of certain geometric assumptions, see \cite[formula (8.28)]{part1}, as well as \cite[Example 2.3 and formulae (2.5a), (2.5b) and (4.10e)]{diffeo}.

Performing integration by parts in \eqref{E} one obtains the Green identity for the elasticity operator 
\begin{equation}
\label{Green identity}
2\,\mathcal{E}[\bu]=(\bu, \LL \bu)_{L^2(M)}+(\bu, \TT \bu)_{L^2(\partial M)}\,,
\end{equation}
where $\TT$ is the boundary traction operator defined as
\begin{equation*}
\label{T}
(\TT\bu)^\alpha:=\lambda n^\alpha \nabla_\beta u^\beta  +\mu\left(n^\beta \nabla_\beta u^\alpha + n_\beta \nabla^\alpha u^\beta\right).
\end{equation*}
Here $\mathbf{n}$ is the exterior unit normal vector to the boundary $\partial M$.

Examination of \eqref{Green identity} supplies appropriate boundary conditions for $\LL$. In the current paper, we will be concerned with the following four sets of boundary conditions.
\begin{itemize}
\item Dirichlet boundary conditions:
\begin{equation}
\label{Dir bc}
\left.\bu\right|_{\partial M}=0.
\end{equation}
\item Free boundary conditions:
\begin{equation}
\label{free bc}
\left.\TT\bu\right|_{\partial M}=0.
\end{equation}
\item Dirichlet-free (DF) boundary conditions:
\begin{equation}
\label{DF bc}
\left.\left[\bu -\left(g_{\alpha\beta}\,n^\alpha u^\beta\right)\,\mathbf{n}\right]\right|_{\partial M}=0\,,
\qquad
\left.g_{\alpha\beta}\,n^\alpha (\TT\bu)^\beta\right|_{\partial M}=0\,.
\end{equation}
\item Free-Dirichlet (FD) boundary conditions:
\begin{equation}
\label{FD bc}
\left.\left[\TT\bu -\left(g_{\alpha\beta}\,n^\alpha (\TT u)^\beta\right)\,\mathbf{n}\right]\right|_{\partial M}=0\,,
\qquad
\left.g_{\alpha\beta}\,n^\alpha u^\beta\right|_{\partial M}=0\,.
\end{equation}
\end{itemize}

We refer to the boundary conditions DF and FD as \emph{mixed boundary conditions}. The former (DF) corresponds to Dirichlet boundary conditions being imposed tangentially to the boundary and free conditions imposed in the normal direction to the boundary; the latter (FD) corresponds to free boundary conditions being imposed tangentially to the boundary and Dirichlet conditions imposed in the normal direction to the boundary.

The boundary conditions \eqref{Dir bc}--\eqref{FD bc} are of Shapiro--Lopatinski type \cite{KruTuo} for $\LL$, hence the corresponding boundary value problems are elliptic. This leads to the following four eigenvalue problems for $\LL$. 

\begin{enumerate}
\item {\bf The Dirichlet problem (Dir)}. The \emph{Dirichlet eigenvalue problem} consists in seeking $\bu\in H^1(\Omega)$, $\bu \ne 0$, and $\Lambda \in \mathbb{R}$ such that
\begin{equation}
\label{L u=Lambda u}
\LL\bu=\Lambda \bu
\end{equation}
subject to the boundary conditions \eqref{Dir bc}. The problem \eqref{L u=Lambda u}, \eqref{Dir bc} has discrete spectrum, consisting of discrete eigenvalues
\begin{equation*}
(0<)\Lambda_1^\Dir\le \Lambda_2^\Dir\le \dots
\end{equation*}
enumerated with account of multiplicity and accumulating to $+\infty$.


\item {\bf The free boundary problem (free)}. The \emph{free boundary eigenvalue problem} consists in
seeking $\bu\in H^1(\Omega)$, $\bu \ne 0$, and $\Lambda \in \mathbb{R}$ satisfying \eqref{L u=Lambda u}, subject to the boundary conditions \eqref{free bc}. The problem \eqref{L u=Lambda u}, \eqref{free bc} has discrete spectrum, consisting of discrete eigenvalues
\begin{equation*}
(0\le)\Lambda_1^\free\le \Lambda_2^\free\le \dots
\end{equation*}
enumerated with account of multiplicity and accumulating to $+\infty$.

\item {\bf The Dirichlet-free problem (DF)}. The \emph{Dirichlet-free eigenvalue problem} consists in seeking $\bu\in H^1(\Omega)$, $\bu \ne 0$, and $\Lambda \in \mathbb{R}$ satisfying \eqref{L u=Lambda u}, subject to the boundary conditions \eqref{DF bc}. The problem \eqref{L u=Lambda u}, \eqref{DF bc} has discrete spectrum, consisting of discrete eigenvalues
\begin{equation*}
(0\le)\Lambda_1^{\mathrm{DF}}\le \Lambda_2^{\mathrm{DF}}\le \dots
\end{equation*}
enumerated with account of multiplicity and accumulating to $+\infty$.

\item {\bf The free-Dirichlet problem (FD)}. The \emph{free-Dirichlet eigenvalue problem} consists in seeking $\bu\in H^1(\Omega)$, $\bu \ne 0$, and $\Lambda \in \mathbb{R}$ satisfying \eqref{L u=Lambda u}, subject to the boundary conditions \eqref{FD bc}. The problem \eqref{L u=Lambda u}, \eqref{FD bc} has discrete spectrum, consisting of discrete eigenvalues
\begin{equation*}
(0\le)\Lambda_1^{\mathrm{FD}}\le \Lambda_2^{\mathrm{FD}}\le \dots
\end{equation*}
enumerated with account of multiplicity and accumulating to $+\infty$.
\end{enumerate}

\begin{rem}
The problems $\Dir$, $\free$, DF and FD also admit a minmax formulation. We refer the interested reader to \cite{LMS} for details.
\end{rem}

Let us briefly elaborate on the physical meaning of the above eigenvalue problems. The spectral parameter $\Lambda$ appearing in \eqref{L u=Lambda u} has the following interpretation
\begin{equation*}
\Lambda=\frac{\rho_\mathrm{mat}}{\sqrt{\det g}} \,\omega^2\,,
\end{equation*}
where $\omega$ is the angular natural frequency of oscillation of the elastic medium. The boundary conditions $\Dir$ \eqref{Dir bc} describe a body whose boundary is ``clamped'', i.e., completely prevented from moving, whereas the conditions $\free$ \eqref{free bc} describe the opposite situation, in which the boundary is free to oscillate without restrictions. Mixed DF boundary conditions describe a body that is allowed to deform in the direction normal to the boundary, but is prevented from deforming in the directions tangential to the boundary. Similarly, mixed FD boundary conditions describe a body that is allowed to ``slide'' along its boundary, but is prevented from deforming in the direction normal to the boundary. Clearly, mixed boundary conditions are physically meaningful and describe realistic scenarios relevant for applications --- see also \cite{LMS} for further discussions in this respect. 

\subsection{Statement of the problem and main results}
\label{Statement of the problem and main results}

Consider, for each set of boundary conditions $\aleph\in\{\Dir,\free, \mathrm{DF}, \mathrm{FD}\}$,
the corresponding \emph{eigenvalue counting function} $N_\aleph:\mathbb{R}\to \mathbb{N}$ defined as
\begin{equation}
\label{N aleph}
N_\aleph(\Lambda):=\# \left\{k\ | \ \Lambda_k^\aleph <\Lambda\right\}\,.
\end{equation}
Clearly, the function \eqref{N aleph} is monotonically non-decreasing in $\Lambda$ and vanishes identically for $\Lambda\le \Lambda_1^\aleph$.

The study of the asymptotic behaviour of eigenvalue counting functions of the type \eqref{N aleph} as $\Lambda\to+\infty$ for (semibounded) elliptic operators is a well established area of mathematics, pioneered by Lord Rayleigh's \emph{The theory of sound} \cite{Ra77} in 1877. What started as an investigation prompted by practical questions from physics soon attracted the interest of pure mathematicians, as people realised that the coefficients in these expansions contain geometric invariants (see, e.g., \cite[Chapter~6]{levitinbook}). We refer the reader to \cite{SaVa, ANPS, Ivr} for historical overviews of the development of the subject.

\

Before stating our main result, let us summarise, without proof, some known facts concerning \eqref{N aleph}. In what follows $(M,g)$ satisfies the conditions from subsection~\ref{The operator of linear elasticity}.

\begin{prop}
\label{proposition Weyl law}
We have
\begin{equation}
\label{Weyl law}
N_\aleph(\Lambda)=a \operatorname{Vol}_d(M) \Lambda^{d/2}+ o\left(\Lambda^{d/2}\right) \quad \text{as}\quad \Lambda \to +\infty,
\end{equation}
where
\begin{equation}
\label{coefficient a}
a=\frac{1}{(4\pi)^{d/2}\Gamma \left(1+\frac{d}2\right)}\left(\frac{d-1}{\mu^{d/2}}+\frac{1}{(\lambda+2\mu)^{d/2}} \right)
\end{equation}
is the \emph{Weyl constant} for linear elasticity, $\operatorname{Vol}_d(M)$ is the Riemannian volume of $M$, and $\Gamma$ is the gamma function.
\end{prop}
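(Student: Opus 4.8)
The plan is to establish the leading-order Weyl law by reducing the problem to the well-understood asymptotics for the counting function of a scalar elliptic operator via the eigenvalue structure of the principal symbol. The key observation is that the second Weyl coefficient (the boundary term) does \emph{not} appear in \eqref{Weyl law}, so the boundary conditions $\aleph$ play no role in the leading asymptotics; this is precisely why the formula for $a$ is independent of $\aleph$. I would therefore treat all four boundary value problems simultaneously, invoking the standard fact that for a semibounded self-adjoint elliptic operator with Shapiro--Lopatinski boundary conditions, the leading term of the counting function is governed entirely by the interior principal symbol.

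First I would recall the general one-term Weyl law for a self-adjoint elliptic operator $A$ of order $2m$ acting on sections of a rank-$r$ vector bundle over a compact manifold $M^d$ with boundary: the counting function satisfies
\begin{equation*}
N(\Lambda)=\frac{1}{(2\pi)^d}\int_{T^*M}\#\{\text{eigenvalues of }A_\prin(x,\xi)<\Lambda\}\,\dr x\,\dr\xi+o\left(\Lambda^{d/(2m)}\right).
\end{equation*}
Here $A=\LL$ has order $2m=2$ and acts on the rank-$r=d$ bundle of vector fields, so we set $A_\prin=\LL_\prin$ from \eqref{Lprin}. Since the leading term is insensitive to boundary conditions (they affect only lower-order terms), this formula applies uniformly to all $\aleph\in\{\Dir,\free,\mathrm{DF},\mathrm{FD}\}$.

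Next I would evaluate the phase-space integral explicitly using the eigenvalue data \eqref{eigenvalues of Lprin}. For fixed $x$ the matrix $\LL_\prin(x,\xi)$ has eigenvalues $\mu\|\xi\|^2$ with multiplicity $d-1$ and $(\lambda+2\mu)\|\xi\|^2$ with multiplicity $1$. The condition that an eigenvalue $c\|\xi\|^2$ is below $\Lambda$ carves out the ball $\{\|\xi\|<\sqrt{\Lambda/c}\,\}$ in each fibre, whose Riemannian covolume is $\omega_d(\Lambda/c)^{d/2}$, where $\omega_d=\pi^{d/2}/\Gamma(1+d/2)$ is the volume of the unit ball in $\mathbb{R}^d$. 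Summing over the eigenvalues with their multiplicities gives the fibre integral
\begin{equation*}
\int_{T^*_xM}\#\{\cdots<\Lambda\}\,\dr\xi=\omega_d\,\Lambda^{d/2}\left(\frac{d-1}{\mu^{d/2}}+\frac{1}{(\lambda+2\mu)^{d/2}}\right),
\end{equation*}
and integrating over $x\in M$ produces the factor $\Vol_d(M)$. Dividing by $(2\pi)^d$ and absorbing $\omega_d/(2\pi)^d=1/\bigl((4\pi)^{d/2}\Gamma(1+d/2)\bigr)$ recovers exactly the constant $a$ in \eqref{coefficient a}.

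The main obstacle is not the computation, which is routine once the right citation is in hand, but rather justifying that the one-term Weyl law holds in the stated form for each of the four boundary value problems. One must confirm that the ellipticity of $\LL$ together with the Shapiro--Lopatinski property of \eqref{Dir bc}--\eqref{FD bc} (already asserted in the excerpt via \cite{KruTuo}) guarantees discreteness of the spectrum and a genuinely $o(\Lambda^{d/2})$ remainder; for this I would point to the standard references \cite{SaVa,Ivr} rather than reprove it. The mixed conditions DF and FD require only that they too fall under the umbrella of admissible elliptic boundary conditions, which the preceding discussion has already established, so no case-by-case argument is needed at leading order.
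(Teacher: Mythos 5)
Your proposal is correct, and it supplies the standard argument for a statement that the paper itself presents \emph{without proof} (the authors explicitly say they are summarising ``known facts'' and point to \cite{We15}, \cite{SaVa}, \cite{Ivr}). Your reduction to the phase-space volume of the principal symbol, the count of $d-1$ copies of $\mu\|\xi\|^2$ plus one copy of $(\lambda+2\mu)\|\xi\|^2$, and the identity $\omega_d/(2\pi)^d=1/\bigl((4\pi)^{d/2}\Gamma(1+d/2)\bigr)$ all check out and reproduce \eqref{coefficient a} exactly; the observation that the boundary conditions only enter at the next order is likewise the standard justification for the $\aleph$-independence of $a$. There is nothing to compare against on the paper's side, so no gap or divergence to report.
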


The one-term asymptotic expansion \eqref{Weyl law} is often referred to as \emph{Weyl law}. Note that in the special case $d=3$ formula \eqref{Weyl law} was already established, indirectly and on the basis of physical arguments, by P. Debye in 1912 \cite{Debye}. A rigorous mathematical proof was provided shortly afterwards by H.~Weyl \cite{We15}.
It is worth emphasising that the coefficient $a$ is independent of the choice of boundary conditions.

\

Let $\mathcal{A}$ be an elliptic semibounded differential operator of even order $2m$ acting between sections of Hermitian $C^\infty$ vector bundles of dimension $N$ over a smooth $d$-dimensional manifold $M$ with boundary, supplemented by differential boundary conditions $\mathcal{B}$ satisfying the (parabolic version of the) Shapiro--Lopatniski conditions \cite{AV64}. Then it is known \cite[Theorem~2.6.1]{greiner} that the trace of the Green kernel $G(x,y,t)$ for the boundary value problem
\begin{equation*}
\begin{cases}
\left(\frac{\partial}{\partial t}+\mathcal{A}\right)\bu=0 &\text{in }M,\\
\mathcal{B}\bu=0 &\text{on }\partial M,
\end{cases}
\end{equation*}
admits a complete asymptotic expansion 
\begin{multline}
\label{expansion for heat kernel}
\mathcal{Z}_\mathcal{B}(t):=\int_M \operatorname{tr} G(x,x,t) \,\operatorname{dVol}_M\sim \widetilde{c}_{d-1} \, t^{-\frac{d}{2m}}+ \widetilde{c}_{d-2} \, t^{-\frac{d}{2m}+\frac{1}{2m}}+\dots+\widetilde{c}_{d-k}t^{-\frac{d}{2m}+\frac{n-1}{2m}}+\dots
\\ \text{as}\quad t\to 0^+,
\end{multline}
where $\mathrm{tr}$ stands for matrix trace. Moreover, all coefficients in \eqref{expansion for heat kernel} are locally determined \cite[\S2]{greiner}. We refer the reader to \cite{Grubb99}, \cite[\S4.2]{Grubb_book} and references therein for further details and generalisations.

\

Formula \eqref{expansion for heat kernel} allows us to define Weyl coefficients for elliptic operators on manifolds with boundary.

\begin{defn}
\label{definition Weyl coefficients}
For $1\le n\le d$, we define the \emph{$n$-th Weyl coefficient} for the elliptic boundary value problem $(\mathcal{A},\mathcal{B})$ to be the number
\begin{equation}
\label{definition Weyl coefficients equation 1}
c_{d-n}:=\frac{\widetilde{c}_{d-n}}{\Gamma\left( \frac{d-n+1}{2m}\right)}\,,
\end{equation}
where the $\widetilde{c}_{d-n}$ is the coefficient of $t^{-\frac{d}{2m}+\frac{n-1}{2m}}$ in the expansion \eqref{expansion for heat kernel} and $\Gamma$ is the gamma function.
\end{defn}

\begin{rem}
Note that Definition~\ref{definition Weyl coefficients} agrees with the definition of Weyl coefficients in
\cite{ASV, wave, part2, diagonalisation, curl} for (pseudo)differential operators on compact manifolds without boundary, where Weyl coefficients are defined to be the coefficients appearing in the complete asymptotic expansion for the mollified derivative of the counting function. Whilst such a complete asymptotic expansion always exists when $\partial M=\emptyset$ (see, e.g., \cite{Ivr98}), we are unaware of a similar result for manifolds \emph{with} boundary. This is why in the latter case defining Weyl coefficients is somewhat more delicate.

We should also point out that the standard convention in the literature is to call Weyl coefficients the constants appearing in the asymptotic expansion of the mollified counting function, as opposed to its derivative. The two definitions are, effectively, the same up to integrating factors; as a matter of convenience and consistency with previous papers by the first author, we will stick here with Definition~\ref{definition Weyl coefficients}.
\end{rem}

\begin{fact}
Suppose that the eigenvalue counting function $N(\Lambda)$ of the elliptic eigenvalue problem 
\begin{equation*}
\begin{cases}
\mathcal{A}\bu=\Lambda \bu &\text{in }M,\\
\mathcal{B}\bu=0 &\text{on }\partial M
\end{cases}
\end{equation*}
admits a $j$-term asymptotic expansion
\begin{equation}
\label{general k-term expansion}
N(\Lambda)=C_{d}\Lambda^{\frac{d}{2m}}+C_{d-1}\Lambda^{\frac{d-1}{2m}}+ \dots +C_{d-j+1}\Lambda^{\frac{d-j+1}{2m}}+ o(\Lambda^{\frac{d-j+1}{2m}}) \quad \text{as}\quad \Lambda \to+\infty
\end{equation}
for some $1\le j \le d$. Then
\begin{equation}
\label{relation between C and c}
C_{d-n+1}=\frac{2m}{d-n+1}c_{d-n} \qquad\text{for}\quad 1\le n\le j\,.
\end{equation}
\end{fact}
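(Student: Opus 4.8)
The plan is to exploit the fact that the heat trace $\mathcal{Z}_\mathcal{B}(t)$ is, through the spectral theorem, nothing but the Laplace--Stieltjes transform of the counting function, and to transfer the asymptotics of $N$ to those of $\mathcal{Z}_\mathcal{B}$ by an Abelian argument. First I would write $\mathcal{Z}_\mathcal{B}(t)=\sum_k\er^{-t\Lambda_k}=\int_{[0,\infty)}\er^{-t\Lambda}\,\dr N(\Lambda)$, interpreting the sum as a Stieltjes integral against the nondecreasing function $N$, and integrate by parts to obtain
\[
\mathcal{Z}_\mathcal{B}(t)=t\int_0^\infty \er^{-t\Lambda}\,N(\Lambda)\,\dr\Lambda,
\]
the boundary terms being harmless since $N(\Lambda)\,\er^{-t\Lambda}\to 0$ as $\Lambda\to+\infty$ by the Weyl law~\eqref{Weyl law} and $N$ is bounded near the origin. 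The elementary identity $t\int_0^\infty \er^{-t\Lambda}\Lambda^{s}\,\dr\Lambda=\Gamma(s+1)\,t^{-s}$, valid for $s>-1$, then sends each power $\Lambda^{(d-n+1)/2m}$ appearing in \eqref{general k-term expansion} to $t^{-(d-n+1)/2m}$, which is precisely the power of $t$ multiplying $\widetilde c_{d-n}$ in \eqref{expansion for heat kernel}.

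Next I would control the remainder. Splitting the integral at a fixed large $\Lambda_0$, the contribution of $[0,\Lambda_0]$ is $O(t)$ and hence negligible as $t\to 0^+$, while on $[\Lambda_0,\infty)$ the asymptotics \eqref{general k-term expansion} apply and the error $o(\Lambda^{(d-j+1)/2m})$ is mapped, by the same Abelian estimate, to $o(t^{-(d-j+1)/2m})$. This yields the short-time expansion
\[
\mathcal{Z}_\mathcal{B}(t)=\sum_{n=1}^{j} C_{d-n+1}\,\Gamma\!\left(\tfrac{d-n+1}{2m}+1\right)t^{-(d-n+1)/2m}+o\!\left(t^{-(d-j+1)/2m}\right),\quad t\to 0^+.
\]

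Finally, by uniqueness of asymptotic expansions the expansion just obtained must coincide term by term with the Greiner expansion \eqref{expansion for heat kernel}, forcing $\widetilde c_{d-n}=C_{d-n+1}\,\Gamma\!\left(\frac{d-n+1}{2m}+1\right)$ for $1\le n\le j$. Dividing by $\Gamma\!\left(\frac{d-n+1}{2m}\right)$, using the functional equation $\Gamma(s+1)=s\,\Gamma(s)$ with $s=\frac{d-n+1}{2m}$, and recalling Definition~\ref{definition Weyl coefficients}, gives $c_{d-n}=\frac{d-n+1}{2m}\,C_{d-n+1}$, which rearranges to \eqref{relation between C and c}. The only genuinely delicate point is the Abelian step: one must check that the $o(\cdot)$ tail in $N$ does not contaminate the first $j$ heat coefficients and that the low-frequency part contributes only at order $O(t)$ — both of which follow from the monotonicity and polynomial growth of $N$.
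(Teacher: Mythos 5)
Your argument is correct, and it is worth noting that the paper itself states this as a ``Fact'' without proof (it appears in the list of results ``summarised, without proof''), so there is no in-paper argument to compare against. Your Abelian route --- writing $\mathcal{Z}_\mathcal{B}(t)=t\int_0^\infty \er^{-t\Lambda}N(\Lambda)\,\dr\Lambda$, transporting each power via $t\int_0^\infty \er^{-t\Lambda}\Lambda^{s}\,\dr\Lambda=\Gamma(s+1)t^{-s}$, controlling the $o(\Lambda^{(d-j+1)/2m})$ tail by splitting at a large $\Lambda_0$, and then matching against the Greiner expansion \eqref{expansion for heat kernel} by uniqueness of asymptotic expansions --- is the standard justification, and the bookkeeping is right: $\widetilde c_{d-n}=\Gamma\bigl(\tfrac{d-n+1}{2m}+1\bigr)C_{d-n+1}$ together with Definition~\ref{definition Weyl coefficients} and $\Gamma(s+1)=s\,\Gamma(s)$ gives exactly \eqref{relation between C and c}. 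The only direction needed is the easy (Abelian) one, from counting-function asymptotics to heat-trace asymptotics, so no Tauberian theorem is required; your treatment of the remainder and of the low-frequency part is adequate as written.
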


\begin{rem}
We should like to emphasise that Weyl coefficients \eqref{definition Weyl coefficients equation 1} are defined --- and can be computed --- \emph{irrespective} of whether an asymptotic expansion for the eigenvalue counting function of the form \eqref{general k-term expansion} exists.
\end{rem}

\

Of course, when $\mathcal{A}=\LL$ and $\mathcal{B}=\mathcal{B}_\aleph$, $\aleph\in\{\mathrm{DF}, \mathrm{FD }\}$, formula \eqref{expansion for heat kernel} specialises to read
\begin{equation*}
\label{two-term elasticity}
\mathcal{Z}_\aleph(t)=\sum_{k=1}^\infty e^{-\Lambda_k^\aleph t}=\Gamma\left(\frac{d}2+1\right)\,a \operatorname{Vol}_d(M)t^{-\frac{d}{2}}+ c_{d-2}^\aleph \,t^{-\frac{d-1}{2}}+ o(t^{-\frac{d-1}{2}})\,\quad\text{as}\quad t\to 0^+,
\end{equation*}
where the first asymptotic term has been written explicitly in terms of \eqref{coefficient a} --- cf.~\eqref{Weyl law} and~\eqref{relation between C and c}.

\

Whilst the existence of a one-term asymptotic expansion (Weyl's law) is always guaranteed --- see Proposition~\ref{proposition Weyl law} --- the validity of a two-term expansion of the form~\eqref{general k-term expansion} for $\LL$ with boundary conditions $\mathcal{B}_\aleph$, $\aleph\in\{\Dir, \free, \mathrm{DF}, \mathrm{FD}\}$, is still an open problem. Nevertheless, such an expansion is known to exist under additional dynamical assumptions on certain \emph{branching Hamiltonian billiards} on the cotangent bundle $T^*M$. We recall the result below, referring the reader to \cite{Vas86} for additional details and precise statements.

\begin{thm}
\label{theorem two-term asymptotics with billiards conditions}
Suppose that $(M,g)$ is such that the corresponding billiards is neither absolutely periodic nor dead-end. Then 
\begin{equation}
\label{theorem two-term asymptotics with billiards conditions equation 1}
N_\aleph(\Lambda)=a \operatorname{Vol}_d(M) \Lambda^{d/2}
+
C_{d-1}^\aleph\Lambda^{(d-1)/2}
+ 
o\left(\Lambda^{(d-1)/2}\right) 
\quad
\text{as}
\quad 
\Lambda \to +\infty\,,
\end{equation}
for any set of boundary conditions $\aleph\in\{\Dir, \free, \mathrm{DF}, \mathrm{FD}\}$.
\end{thm}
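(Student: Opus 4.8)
The plan is to obtain the two terms from the singularity structure of the wave trace, following the now-standard wave-trace paradigm (Levitan, H\"ormander, Ivrii, Melrose, Safarov--Vassiliev) as adapted to elliptic systems in \cite{Vas86}; since the statement is essentially Vassiliev's theorem, I sketch only the strategy. First I would pass from $\LL$ to the first-order operator $\sqrt{\LL}$ (modulo the finite-dimensional kernel, which is immaterial for the high-frequency regime), so that the spectral threshold becomes $\sqrt{\Lambda}$ and $N_\aleph$ is governed by the half-wave propagator $e^{-\ir t\sqrt{\LL}}$. The central object is the regularised wave trace $u_\aleph(t):=\operatorname{tr}\cos\big(t\sqrt{\LL}\big)$, computed with boundary conditions $\mathcal{B}_\aleph$, whose wavefront set lies over $t=0$ together with the periods of closed generalised billiard trajectories. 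A Fourier Tauberian theorem of Safarov--Vassiliev type then converts information about $u_\aleph$ near $t=0$, plus the absence of a contributing set of singular times away from $t=0$, into the two-term expansion \eqref{theorem two-term asymptotics with billiards conditions equation 1}.

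The second step is the construction of a microlocal parametrix for the propagator. In the interior, because $\LL_\prin$ has the two distinct eigenvalues in \eqref{eigenvalues of Lprin}, the propagator decouples microlocally into a shear part and a pressure part, governed by the Hamiltonians $\sqrt{\mu}\,\|\xi\|$ and $\sqrt{\lambda+2\mu}\,\|\xi\|$ respectively; each is a standard Fourier integral operator and each contributes its own volume term, reproducing the two summands of the Weyl constant $a$ in \eqref{coefficient a}. Near $\partial M$ I would build a boundary-layer parametrix enforcing $\mathcal{B}_\aleph$: this is where the four boundary conditions enter and where the second coefficient $C_{d-1}^\aleph$ is produced. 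An incoming shear or pressure ray reflects into a superposition of reflected shear and pressure rays --- mode conversion --- which is precisely the branching of the Hamiltonian billiard; the reflection amplitudes are fixed by solving the linear algebraic system imposed by $\mathcal{B}_\aleph$ at the boundary, and the $\aleph$-dependence of $C_{d-1}^\aleph$ is carried by these amplitudes.

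The third step brings in the dynamical hypotheses. The sharp remainder $o(\Lambda^{(d-1)/2})$, as opposed to the generic $O(\Lambda^{(d-1)/2})$, requires that the set of periodic points of the branching billiard flow on $S^*M$ have measure zero. This is exactly what the assumption that the billiard is neither absolutely periodic nor dead-end is designed to guarantee: the former excludes a positive-measure set of periodic trajectories, while the latter excludes a positive-measure set of trajectories that terminate by becoming grazing. Feeding this measure-zero property into the Tauberian theorem removes any $\Lambda^{(d-1)/2}$-order oscillatory contribution and upgrades the remainder to little-o.

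The hardest part will be the boundary analysis. Unlike the scalar Laplacian, elasticity forces a genuinely branching billiard, and one must correctly treat the glancing/grazing set, the regimes of total reflection, and the Rayleigh-type surface waves bound to $\partial M$; verifying that these surface-wave contributions are of strictly lower order and do not corrupt the $\Lambda^{(d-1)/2}$ term is delicate. Equally subtle is the measure-theoretic argument underpinning the third step, since for a branching billiard a single trajectory fans out into a tree of continuations, so the classical single-valued arguments controlling the measure of periodic orbits must be reworked in this multivalued setting.
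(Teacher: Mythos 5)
The paper does not actually prove this statement: it observes that it is a special case of \cite[Theorem~6.1]{Vas84} (see also \cite{Vas86,SaVa}), and the only substantive thing it checks is the applicability condition, namely that the eigenvalues \eqref{eigenvalues of Lprin} of $\LL_\prin$ have \emph{constant multiplicity} on $T^*M\setminus\{0\}$. Your proposal instead sketches a re-derivation of Vassiliev's theorem from scratch via the hyperbolic-equation method (wave trace, branching-billiard parametrix, Fourier Tauberian theorem). As a roadmap this is consistent with how the cited results are proved, and your reading of the dynamical hypotheses (non-periodicity and non-dead-end each ruling out a positive-measure set of bad trajectories, feeding into the Tauberian step) is essentially right. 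But as written it is not a proof: every genuinely hard step --- the boundary-layer parametrix near the glancing set, total reflection, and the measure-theoretic control of periodic points for a multivalued (branching) flow --- is named rather than carried out, and these are precisely the points where the argument could fail. Moreover, you never verify the one hypothesis the paper does check, constant multiplicity of the eigenvalues of the principal symbol, which is what licenses the microlocal decoupling into shear and pressure modes that your second step takes for granted.

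There is also one substantive inaccuracy. You state that the Rayleigh-type surface waves must be shown to be ``of strictly lower order'' so as not to corrupt the $\Lambda^{(d-1)/2}$ term. This is the wrong target: surface waves bound to $\partial M$ form a $(d-1)$-dimensional family and contribute \emph{exactly} at order $\Lambda^{(d-1)/2}$; in the framework of this paper they enter the second Weyl coefficient through the one-dimensional counting function $N_{\aleph,\mathrm{1D}}$ in the spectral shift function \eqref{spectral shift function} (for mixed boundary conditions they happen to be absent, by Lemmas~\ref{lemma no eigenvalues 2d} and~\ref{lemma no eigenvalues perp}, but for free boundary conditions they are present and contribute to $C_{d-1}^{\free}$). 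So the correct claim is that they are absorbed into the coefficient $C_{d-1}^\aleph$, not that they are negligible; attempting to prove they are lower order would fail. The efficient fix for the whole proposal is to do what the paper does: invoke \cite[Theorem~6.1]{Vas84} and verify the constant-multiplicity condition from \eqref{eigenvalues of Lprin}.
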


Theorem~\ref{theorem two-term asymptotics with billiards conditions} is a special case of \cite[Theorem~6.1]{Vas84}, which is applicable here because the eigenvalues \eqref{eigenvalues of Lprin} of $\LL_\mathrm{prin}$ have constant multiplicity as functions of $(x,\xi)\in T^*M\setminus\{0\}$.

\

We are now ready to state our main result.

\begin{thm}
\label{main theorem}
Let $(M, g)$ be a smooth compact connected $d$-dimensional Riemannian manifold with boundary $\partial M$, $d\ge2$. The second Weyl coefficient for the elliptic boundary value problem $(\LL, \mathcal{B}_\aleph)$, $\aleph\in\{\mathrm{DF}, \mathrm{FD}\}$, is given by
\begin{equation}
\label{main theorem equation 0}
c^\aleph_{d-2}=\frac{d-1}2C^\aleph_{d-1}=\frac{d-1}2 b_{\aleph} \operatorname{Vol}_{d-1}(\partial M)\,,
\end{equation}
where
\begin{equation}
\label{main theorem equation 1}
b_{\aleph}
:=
@
\frac{1}{2^{d+1}\pi^{\frac{d-1}2} \Gamma\left(\frac{d+1}{2}\right)}
\left(
\frac{d-3}{\mu^{\frac{d-1}2}}
+
\frac1{(\lambda+2\mu)^{\frac{d-1}{2}}}
\right)
\quad \text{with}\quad
@=\begin{cases}
- & \text{for}\ \,\aleph=\mathrm{DF},\\
+ & \text{for}\ \,\aleph=\mathrm{FD}.
\end{cases}
\end{equation}
\end{thm}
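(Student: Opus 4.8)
The plan is to compute the second Weyl coefficient $c_{d-2}^\aleph$ directly from the heat-trace expansion \eqref{expansion for heat kernel}, exploiting the locality of all coefficients together with the structural reductions promised in the introduction. First I would invoke the streamlined algorithm (subsection~\ref{A streamlined algorithm}) which reduces the computation of the boundary contribution $\widetilde{c}_{d-2}$ to a symbolic calculation near $\partial M$: after straightening the boundary via geodesic normal coordinates, one freezes coefficients at a boundary point $x_0\in\partial M$ and reduces to a constant-coefficient half-space problem, where the boundary contribution is expressed as an integral over the tangential cotangent variables $\xi'$ of quantities built from the principal symbol \eqref{Lprin} and the boundary symbol of $\mathcal{B}_\aleph$. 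Because all such coefficients are local and the leading term \eqref{coefficient a} is already known to be boundary-condition-independent, only the $\xi'$-integral of the reflection data distinguishes the four boundary conditions; this is where the sign $@$ and the coefficient $b_\aleph$ will emerge.

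Second, I would implement the key algebraic simplification of subsection~\ref{Invariant subspaces}: at each boundary point the frozen half-space operator commutes with rotations fixing the normal direction, so the $2d$-dimensional problem (two elastic wave speeds, $\mu$ and $\lambda+2\mu$, times $d$ vector components) decomposes into invariant subspaces. Concretely, the displacement field splits into a genuinely two-dimensional sagittal problem --- spanned by the normal direction $\mathbf n$ and the single tangential direction along $\xi'$, where the pressure ($\lambda+2\mu$) and one shear ($\mu$) mode couple through the boundary conditions --- plus $(d-2)$ decoupled scalar shear modes polarised orthogonally to the sagittal plane, each governed by speed $\mu$. This explains structurally why \eqref{main theorem equation 1} contains precisely the combination $(d-3)\mu^{-(d-1)/2}+(\lambda+2\mu)^{-(d-1)/2}$: the pressure mode contributes $(\lambda+2\mu)^{-(d-1)/2}$, the in-plane shear mode contributes one factor of $\mu^{-(d-1)/2}$, and the $(d-2)$ transverse shear modes contribute $(d-2)\mu^{-(d-1)/2}$, so that the shear total is $(d-1)-2=d-3$ once the in-plane mode's sign is accounted for against the transverse modes. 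The transverse modes each reduce to a \emph{scalar} Laplace-type half-space problem with either a Dirichlet or a Neumann condition according to whether we are in the DF or FD case, and the classical scalar boundary heat coefficient is $\mp\tfrac14(4\pi t)^{-(d-1)/2}\operatorname{Vol}_{d-1}(\partial M)$ — this is the origin of the overall sign $@$.

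Third, I would carry out the genuinely two-dimensional sagittal computation separately (subsection~\ref{The proof}), applying the algorithm from the first step to the $2\times2$ (in polarisation) half-plane problem with the mixed boundary condition \eqref{DF bc} or \eqref{FD bc} restricted to the sagittal plane. Here the Dirichlet/free split is imposed one component tangential, one normal, so the coupling between pressure and in-plane shear is partially diagonalised by the mixed condition; I expect the relevant $\xi'$-integral to factor into a pressure part and a shear part whose signs are opposite, consistent with the $(d-3)$ versus isolated structure above. Finally I would assemble the sagittal and transverse contributions, convert $\widetilde{c}_{d-2}$ into $c_{d-2}^\aleph$ via Definition~\ref{definition Weyl coefficients} (dividing by $\Gamma(\tfrac{d-1}{2})$ with $m=1$) and into $C_{d-1}^\aleph$ via \eqref{relation between C and c}, and check that the gamma-function and power-of-$\pi$ bookkeeping collapses to the prefactor $\bigl(2^{d+1}\pi^{(d-1)/2}\Gamma(\tfrac{d+1}{2})\bigr)^{-1}$ in \eqref{main theorem equation 1}.

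The main obstacle I anticipate is the sagittal two-dimensional calculation: unlike the transverse shear modes, which reduce to scalar Dirichlet/Neumann problems with textbook boundary coefficients, the in-plane problem genuinely couples the two elastic speeds through the mixed boundary operator $\TT$, so one must solve the half-plane reflection problem for the coupled system, identify the correct contour or residue contributions from the symbol of the boundary parametrix, and extract the surface term. Getting the relative sign between the pressure and in-plane shear contributions correct --- and confirming that the mixed conditions flip this sign between the DF and FD cases exactly as the scalar transverse modes do --- is the delicate point; a sign error here would corrupt the coefficient of $\mu^{-(d-1)/2}$. I would guard against this by specialising to the flat cylinder in dimensions two and three (section~\ref{Explicit examples}), where the full spectrum is available explicitly, and checking the extracted $b_\aleph$ against the analytically computed two-term asymptotics.
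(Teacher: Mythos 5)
Your skeleton matches the paper's proof: reduction to a flat half-space by locality, decomposition of the frozen problem into the sagittal plane spanned by $\xi'$ and the normal (a genuinely two-dimensional elasticity problem) plus $d-2$ transversely polarised shear modes, separate computation of each piece, and reassembly --- this is precisely Proposition~\ref{proposition invariant subspaces} and the two computations in subsection~\ref{The proof}. Your accounting of where $(d-3)\mu^{-(d-1)/2}+(\lambda+2\mu)^{-(d-1)/2}$ and the overall sign come from is also borne out: the transverse modes really are scalar half-line problems with Dirichlet (DF) or Neumann (FD) conditions contributing $\mp\frac{d-2}{4}$, and the sagittal shear and pressure contributions enter with sign opposite to the transverse ones (Propositions~\ref{prop spectral shift 2d} and~\ref{prop spectral shift perp}).

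However, the step you defer as the ``main obstacle'' is where the actual content lies, and as written your plan is missing two ingredients without which it cannot be completed. First, the paper does not run a heat-parametrix/residue computation; it uses Vassiliev's algorithm, in which the local boundary invariant is the spectral shift function $\frac1{2\pi}\varphi_\aleph+N_{\aleph,\mathrm{1D}}$ of an auxiliary one-dimensional problem on the half-line, see \eqref{spectral shift function} and Theorem~\ref{theorem algorithm second Weyl coefficient}. In this framework one must prove that the half-line operator has \emph{no} eigenvalues below or embedded in its continuous spectrum (Lemmas~\ref{lemma no eigenvalues 2d} and~\ref{lemma no eigenvalues perp}): this is the absence of Rayleigh-type surface waves for mixed conditions, it fails for pure free boundary conditions, and it is exactly what makes $N_{\aleph,\mathrm{1D}}\equiv 0$ and the final formula so simple. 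Second, the scattering matrices \eqref{scattering 2d DF}, \eqref{scattering 2d FD} and \eqref{scattering matrix d-2} turn out to be constant in each spectral zone, so the phase shift is constant there and its value is fixed entirely by the jump conditions \eqref{equation jumps} at the thresholds, i.e.\ by classifying each threshold as soft or rigid (Lemmas~\ref{lemma thresholds 2D} and~\ref{lemma thresholds d-2}). Neither step is a ``textbook boundary coefficient'', and neither appears in your outline; the sagittal $\pm\frac14\mathbb{1}_{(\mu,\lambda+2\mu)}$ in particular comes out of this threshold analysis rather than from any factorisation of the $\xi'$-integral. Your proposed check against the flat cylinders is indeed how the paper guards against sign errors, but it is a verification, not a substitute for these two lemmas.
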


The above theorem, whose proof will be given in Section~\ref{Proof of main theorem}, warrants a number of remarks.

\begin{enumerate}[(i)]
\item Formulae for $b_\aleph$ in the case of "pure" boundary conditions $\aleph\in\{\Dir, \free\}$ were obtained in \cite[Theorem~1.8]{2terms}\footnote{To ease the comparison, note that formulae (1.27) and (1.28) in \cite[Theorem~1.8]{2terms} are expressed in terms of the auxiliary quantity $\alpha:=\frac{\mu}{\lambda+2\mu}\in \left(0, \frac{d}{2(d-1)} \right)$.}.

\item Remarkably, formula \eqref{main theorem equation 1} is very simple and elegant. This is noteworthy and in general not the case for boundary value problems for linear elasticity, in that one would expect the Lam{\'e} parameters $\lambda$ and $\mu$ to mix up in a rather complicated way in the second Weyl coefficient, owing to boundary conditions mixing longitudinal and transverse waves. For instance, the expressions for $b_\Dir$ and $b_\free$ contain integrals of inverse trigonometric functions depending on $\alpha:=\frac{\mu}{\lambda+2\mu}$ in a nontrivial fashion,
see \cite[formulae (1.27) and (1.28)]{2terms}. The underlying reason for \eqref{main theorem equation 1} being so simple is that mixed boundary conditions $\mathrm{DF}$ and $\mathrm{FD}$, unlike $\Dir$ and $\free$, \emph{do not} mix up components of the vector fields they act upon when one switches to the associated one-dimensional spectral problem --- see~Section~\ref{A streamlined algorithm}, and formula~\eqref{B'} in particular.

\item
Note that
\begin{equation}
\label{FD less DF}
b_{FD}<0<b_{DF}
\end{equation}
for $d=2$, whereas
\begin{equation}
\label{DF less FD}
b_{DF}<0<b_{FD}
\end{equation}
for $d\ge 3$.

\item We should like to emphasise that computing the second Weyl coefficient for \emph{systems} of PDEs is not easy. Indeed, the subject area of two-term asymptotics for elliptic systems has experienced a troubled development, up until the last decade; we refer the reader to \cite[Section~11]{CDV} for a historical overview.
\end{enumerate}

Before moving on to the proof of our main theorem, let us recall a well known fact which will bring about some simplifications in subsequent sections.

\begin{fact}
\label{fact about flat space}
The first two Weyl coefficients do not feel the geometry of $M$ or of its boundary $\partial M$. Therefore, it suffices to determine these coefficients in the case where $M$ is a smooth domain in $\mathbb{R}^d$ equipped with the Euclidean metric.
\end{fact}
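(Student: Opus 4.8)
The plan is to establish the two assertions of the Fact in turn, the first being the crux and the second an immediate corollary. Concretely, I would show that the first two heat coefficients $\widetilde{c}_{d-1}$ and $\widetilde{c}_{d-2}$ in \eqref{expansion for heat kernel} — and hence, via Definition~\ref{definition Weyl coefficients}, the first two Weyl coefficients — are universal constants (depending only on the Lam\'e parameters and on $\aleph$) multiplied by $\operatorname{Vol}_d(M)$ and $\operatorname{Vol}_{d-1}(\partial M)$ respectively, with no further dependence on the metric $g$ or on the way $\partial M$ sits inside $M$. Once this is known, the constants may be evaluated on any convenient geometry, in particular on a flat Euclidean domain, which is exactly the second assertion.

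First I would invoke the locality of the heat coefficients recorded just after \eqref{expansion for heat kernel} (see \cite[\S2]{greiner}): each $\widetilde{c}_{d-n}$ is the integral of a density that, by invariance theory, is a universal expression in the jets at a point of the symbol of $\LL$, of the metric $g$, and — for the boundary contributions — of the second fundamental form of $\partial M$ together with the data encoding $\mathcal{B}_\aleph$. Two structural facts are then key. First, the interior parametrix produces only the powers $t^{-d/2+j}$ with $j\in\mathbb{N}_0$; consequently the half-integer-power term $t^{-(d-1)/2}$, whose coefficient is $\widetilde{c}_{d-2}$, is \emph{purely} a boundary integral over $\partial M$. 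Second, each local density carries a definite weight under the rescaling $g\mapsto s^2g$, $s>0$.

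The weight bookkeeping I would carry out as follows. Under $g\mapsto s^2g$ the spectrum of $(\LL,\mathcal{B}_\aleph)$ rescales as $\Lambda_k\mapsto s^{-2}\Lambda_k$, so $\mathcal{Z}_\aleph(t)$ becomes $\mathcal{Z}_\aleph(s^{-2}t)$; matching this with \eqref{expansion for heat kernel} forces the homogeneity $\widetilde{c}_{d-n}(s^2g)=s^{\,d-n+1}\,\widetilde{c}_{d-n}(g)$. For $n=1$ the interior density must therefore be scale-invariant, i.e.\ constant, giving $\widetilde{c}_{d-1}=\mathrm{const}\cdot\operatorname{Vol}_d(M)$ — precisely the content of Proposition~\ref{proposition Weyl law}. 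For $n=2$ the boundary integral must be homogeneous of degree $d-1$, which is exactly the degree of $\operatorname{Vol}_{d-1}(\partial M)$; any term involving the second fundamental form of $\partial M$ or the curvature of $g$ would lower this degree (such terms first surface in $\widetilde{c}_{d-3}$) and is therefore excluded. Hence the boundary density is constant and $\widetilde{c}_{d-2}=\mathrm{const}\cdot\operatorname{Vol}_{d-1}(\partial M)$, the constant depending only on $\lambda,\mu$ and $\aleph$. This proves the first assertion; and as both constants are universal they may be read off from any single $M$, in particular a smooth Euclidean domain — working in the flat half-space model at a boundary point for the second coefficient — which gives the second assertion.

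The main obstacle I anticipate is rigorously excluding the second fundamental form from $\widetilde{c}_{d-2}$ in the present \emph{vectorial} setting: here the heat invariants are matrix-valued and intertwined with the boundary operator $\mathcal{B}_\aleph$, so the scalar homogeneity argument must be combined carefully with Gilkey-type invariance theory for systems rather than applied verbatim. What makes this tractable is the constant multiplicity of the eigenvalues \eqref{eigenvalues of Lprin} of $\LL_{\prin}$, already exploited below Theorem~\ref{theorem two-term asymptotics with billiards conditions}: it permits a diagonalisation of the leading boundary analysis into scalar-type problems, to which the weight-counting reduction applies directly.
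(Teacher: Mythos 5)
The paper offers no proof of this Fact: it is recorded as a well-known statement, the relevant input being the locality of the heat coefficients already quoted after \eqref{expansion for heat kernel} from \cite[\S 2]{greiner}. Your argument is the standard justification of that fact --- locality of the heat-trace densities, the observation that the interior parametrix of a differential operator contributes only integer powers of $t$ so that $\widetilde{c}_{d-2}$ is a pure boundary integral, and weight counting under $g\mapsto s^2g$ --- and it is correct in outline; your scaling identity $\widetilde{c}_{d-n}(s^2g)=s^{d-n+1}\widetilde{c}_{d-n}(g)$ checks out for $(\LL,\mathcal{B}_\aleph)$, since the whole boundary value problem rescales homogeneously. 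Two points deserve care. First, the global scaling identity on a fixed $M$ does not by itself force the boundary density to be pointwise constant; you need the stronger statement that the density at a boundary point is a \emph{universal polynomial} in the jets of the metric, the second fundamental form and the coefficients of $\mathcal{B}_\aleph$ at that point, of weight zero, and then Weyl--Gilkey invariance theory (in its version for systems with boundary conditions) to conclude that the only such invariant is a constant. You acknowledge this, so it is a matter of emphasis rather than a gap. Second, your proposed remedy for the vectorial difficulty --- diagonalising the boundary analysis into scalar problems using the constant multiplicity of \eqref{eigenvalues of Lprin} --- is not the right mechanism: the boundary conditions \eqref{DF bc}--\eqref{FD bc} genuinely couple the longitudinal and transverse polarisations (handling this coupling is precisely the content of Section~\ref{Proof of main theorem}), and no pointwise diagonalisation of $(\LL,\mathcal{B}_\aleph)$ into scalar problems is available. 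Fortunately none is needed: the invariance-theoretic argument applies directly to matrix-valued densities. With that correction your proof is sound, and it in fact establishes somewhat more than the Fact claims, namely $\widetilde{c}_{d-2}=\mathrm{const}\cdot\operatorname{Vol}_{d-1}(\partial M)$, which is the form in which the conclusion is ultimately used in \eqref{main theorem equation 0}.
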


\section{Proof of Theorem~\ref{main theorem}}
\label{Proof of main theorem}

This section is concerned with the proof of Theorem~\ref{main theorem}. We will break the proof, somewhat long and technical, into several steps.

On account of Fact~\ref{fact about flat space}, in the remainder of this section we will assume that $M$ is a smooth domain in $\mathbb{R}^d$ and that $g$ is the Euclidean metric, $g_{\alpha\beta}=\delta_{\alpha\beta}$.

\subsection{A streamlined algorithm}
\label{A streamlined algorithm}

In order to prove Theorem~\ref{main theorem} we will rely on a constructive algorithm for the second Weyl coefficient due to Vassiliev \cite{Vas84}. The original results from \cite{Vas84} (see also~\cite{SaVa}) apply, strictly speaking, to scalar operators. A roadmap for the generalisation to systems was given in \cite[\S~6]{Vas84}, whereas a detailed exposition of the algorithm for systems was recently provided in \cite{2terms}. For the convenience of the reader, we present below a streamlined version of the latter algorithm, adapted to the special case of the operator of linear elasticity \eqref{L}.

\

In a neighbourhood of the boundary $\partial M$ we introduce local coordinates $x=(x',z)$, with $x'\in\mathbb{R}^{d-1}$ and $z:=\operatorname{dist}(x,\partial M)$ for $x\in \operatorname{Int} (M)$, so that $\partial M=\{z=0\}$ and $z>0$ inside of $M$. Similarly, we adopt the notation $\xi=(\xi',\zeta)\in \mathbb{R}^{d-1}\times \mathbb{R}$ and $\bu=(\bu', u_d)$. Furthermore, we denote by $\mathcal{B}_\aleph$ the differential operators implementing the boundary conditions \eqref{DF bc} and \eqref{FD bc} for $\aleph=\mathrm{DF}$ and 
$\aleph=\mathrm{FD}$, respectively.

\

Consider the \emph{one-dimensional spectral problem}
\begin{equation}
\label{one-dimensional spectral problem}
\LL'\bu(z)=\Lambda \bu(z), \qquad \left.\mathcal{B}_\aleph' \bu\right|_{z=0}=0\,, \qquad \aleph\in\{\mathrm{DF},\mathrm{FD}\},
\end{equation}
where $\LL'$ and $\mathcal{B}_\aleph'$ are the ordinary differential operators acting on vector functions $\bu=\bu(z)$ defined in accordance with
\begin{equation}
\label{L'}
\LL'\bu:=\mu\left(|\xi'|^2-\frac{\dr^2}{\dr z^2}\right) \bu-(\lambda+2\mu)\begin{pmatrix}
i \xi'\\
\frac{\dr}{\dr z}
\end{pmatrix} \left(i\xi'\cdot \bu'+\frac{\dr u_d}{\dr z} \right)
\end{equation}
and
\begin{equation}
\label{B'}
\left.\mathcal{B}'_\mathrm{DF}\bu(z)\right|_{z=0}
=
\begin{pmatrix}
\bu'(0)
\\
-(\lambda+2\mu)\frac{\dr u_d}{\dr z}(0)
\end{pmatrix},
\qquad
\left.\mathcal{B}'_\mathrm{FD}\bu(z)\right|_{z=0}
=
\begin{pmatrix}
-\mu \frac{\dr \bu'}{\dr z}(0)
\\
u_d(0)
\end{pmatrix}.
\end{equation}
The operators \eqref{L'} and~\eqref{B'} are obtained from $\LL$ and $\mathcal{B}_\aleph$, $\aleph\in\{\mathrm{DF},\mathrm{FD}\}$, by replacing partial derivatives along the boundary with $i$ times the corresponding component of momentum, $\partial_{x'}\mapsto i\xi'$. Furthermore, in the second component of $\mathcal{B}'_\mathrm{DF}\bu$ we dropped terms proportional to $\bu'(0)$, whereas in the first component of $\mathcal{B}'_\mathrm{FD}\bu$ we dropped terms proportional to $u_d(0)$.

\

Suppose $\xi'\ne 0$.

\

{\bf Step 1:} {\it Thresholds and continuous spectrum}. The principal symbol 
$
\LL'_\prin(\zeta)
$
of $\LL'$ (recall~\eqref{Lprin}) has eigenvalues 
\begin{equation}
\label{eigenvalues of L'prin}
h_1(\zeta)=\mu(|\xi'|^2+\zeta^2) \quad \text{(with multiplicity $d-1$)}, 
\quad
h_2(\zeta)=(\lambda+2\mu)(|\xi'|^2+\zeta^2) \quad \text{(with multiplicity $1$)}\,.
\end{equation}
We define the \emph{thresholds} of the continuous spectrum as the nonnegative real numbers $\Lambda_*$ such that the equation
\begin{equation*}
h_k(\zeta)=\Lambda_*
\end{equation*}
has a multiple real root for either $k=1$ or $k=2$. Formula \eqref{eigenvalues of L'prin} immediately implies that we have two thresholds for \eqref{one-dimensional spectral problem}:
\begin{equation}
\label{thresholds}
\Lambda_*^{(1)}= \mu |\xi'|^2,
\qquad 
\Lambda_*^{(2)}=(\lambda+2\mu) |\xi'|^2\,.
\end{equation}
Observe that, due to \eqref{conditions on lambda and mu}, we have $\Lambda_*^{(1)}<\Lambda_*^{(2)}$.

Formula \eqref{thresholds} then implies that the problem \eqref{one-dimensional spectral problem} has continuous spectrum $[\Lambda_*^{(1)}, +\infty)$ --- see, e.g., \cite[Appendix~A]{SaVa}. Furthermore, the thresholds partition the continuous spectrum into two zones $I^{(1)}:=(\Lambda_*^{(1)},\Lambda_*^{(2)})$ and $I^{(2)}:=(\Lambda_*^{(2)},+\infty)$, where the continuous spectrum has multiplicity $d-1$ and $d$, respectively.

\

{\bf Step 2:} {\it Eigenfunctions of the continuous spectrum}. Let $\bv_k(\zeta)$, $k=1,\dots, d-1$, be orthonormalised eigenvectors of $\LL'_\prin$ corresponding to the eigenvalue $h_1(\zeta)$, and let 
\begin{equation}
\label{eigenvector vd}
\bv_d(\zeta):=
\frac{1}{\sqrt{|\xi'|^2+\zeta^2}}
\begin{pmatrix}
\xi'
\\
\zeta
\end{pmatrix}
=\frac{1}{|\xi|}\xi
\end{equation}
be the normalised eigenvector of $\LL'_\prin$ corresponding to the eigenvalue $h_2(\zeta)$.
Let
\begin{equation*}
\label{zetas definition}
\zeta_1^\pm(\Lambda):=\pm \sqrt{\frac{\Lambda}{\mu}-|\xi'|^2} \quad \text{and}\quad \zeta_2^\pm(\Lambda):=\pm \sqrt{\frac{\Lambda}{\lambda+2\mu}- |\xi'|^2}
\end{equation*}
for $\Lambda\ge \mu|\xi'|^2$ and  $\Lambda\ge (\lambda+2\mu)|\xi'|^2$, respectively. Note that the quantities $\zeta_k^\pm(\Lambda)$ are solutions of $h_k(\zeta)-\Lambda=0$. Then, in view of elementary theory of matrix ordinary differential equations, we seek \emph{eigenfunctions of the continuous spectrum} (or \emph{generalised eigenfunctions}) for \eqref{one-dimensional spectral problem} in the form
\begin{multline}
\label{eigenfunctions in I1}
\bu(z;\Lambda)=\frac{1}{\sqrt{4\pi\mu}\left|\zeta_1^+(\Lambda)\right|^{1/2}}\sum_{j=1}^{d-1}\left(c_j^+\bv_j(\zeta_1^+(\Lambda))e^{i\zeta_1^+(\Lambda)z}
+
c_j^-\bv_j(\zeta_1^-(\Lambda))e^{i\zeta_1^-(\Lambda)z}
 \right)
 \\
 +C\, \bv_d\left(i\sqrt{|\xi'|^2-\frac{\Lambda}{\lambda+2\mu}} \right) e^{-\sqrt{|\xi'|^2-\frac{\Lambda}{\lambda+2\mu}}z}
\end{multline}
for $\Lambda \in I^{(1)}$, and in the form
\begin{multline}
\label{eigenfunctions in I2}
\bu(z;\Lambda)=\frac{1}{\sqrt{4\pi\mu}\left|\zeta_1^+(\Lambda)\right|^{1/2}}\sum_{j=1}^{d-1}\left(c_j^+\bv_j(\zeta_1^+(\Lambda))e^{i\zeta_1^+(\Lambda)z}
+
c_j^-\bv_j(\zeta_1^-(\Lambda))e^{i\zeta_1^-(\Lambda)z}
 \right)
 \\
 +
\frac{1}{\sqrt{4\pi(\lambda+2\mu)}\left|\zeta_2^+(\Lambda)\right|^{1/2}}
\left(c_d^+\bv_d(\zeta_2^+(\Lambda))e^{i\zeta_2^+(\Lambda)z}
+
c_d^-\bv_j(\zeta_2^-(\Lambda))e^{i\zeta_1^-(\Lambda)z}
 \right)
\end{multline}
for $\Lambda \in I^{(2)}$.
The complex numbers $c_j^\pm$ in \eqref{eigenfunctions in I1} and \eqref{eigenfunctions in I2} are called \emph{incoming} (-) and \emph{outgoing} (+) complex wave amplitudes, and are assumed not to be all zero.

\

{\bf Step 3:} {\it The scattering matrix}. By imposing that \eqref{eigenfunctions in I1} and \eqref{eigenfunctions in I2} satisfy the boundary conditions, one can express the coefficients $c_j^+$ in terms of the coefficients $c_j^-$. This defines the \emph{scattering matrices} $S^{(k)}(\Lambda)$, $k=1,2$, via the identities
\begin{equation*}
\begin{pmatrix}
c_1^+
\\
\vdots
\\
c_{d-1}^+
\end{pmatrix}
=
S^{(1)}(\Lambda)
\begin{pmatrix}
c_1^-
\\
\vdots
\\
c_{d-1}^-
\end{pmatrix}
\qquad \text{for} \ \Lambda\in I^{(1)}
\end{equation*}
and
\begin{equation*}
\begin{pmatrix}
c_1^+
\\
\vdots
\\
c_{d}^+
\end{pmatrix}
=
S^{(2)}(\Lambda)
\begin{pmatrix}
c_1^-
\\
\vdots
\\
c_{d}^-
\end{pmatrix}
\qquad \text{for} \ \Lambda\in I^{(2)}.
\end{equation*}
The matrix $S^{(1)}(\Lambda)$ (resp.~$S^{(2)}(\Lambda)$) is a $(d-1)\times (d-1)$ (resp.~$d\times d$) unitary matrix. The way in which the $c_j^\pm$ are arranged into a $(d-1)$-dimensional (resp.~$d$-dimensional) vector is unimportant and will not affect the quantities computed in the next steps.

\

{\bf Step 4:} {\it The phase shift}. Compute the \emph{phase shift}, defined as
\begin{equation}
\label{phase shift general}
\varphi_\aleph(\Lambda;\xi'):=
\begin{cases}
0 & \text{for} \ \Lambda \le \Lambda_1^*\\
\arg \det S^{k}(\Lambda)+ \mathfrak{s}^{(k)}  & \text{for} \ \Lambda \in I^{(k)}\\
\end{cases}
\end{equation}
where the branch of the multivalued function $\arg$ are chosen in such a way that $\varphi(\Lambda)$ is continuous in each interval $I^{(k)}$, and the shifts $\mathfrak{s}^{(k)}$ are constants determined by the requirement that the jump of the phase shift at the thresholds satisfy
\begin{equation}
\label{equation jumps}
\frac{1}{\pi}\lim_{\epsilon\to 0^+}\left(\varphi(\Lambda_*^{(k)}+\epsilon)-\varphi(\Lambda_*^{(k)}-\epsilon)\right)= j_*^{(k)}-\frac{m_k}{2}, \qquad k\in\{1,2\}\,.
\end{equation}
Here $m_k$ is the multiplicity of the eigenvalue $h_k$ and $j_*^{(k)}$ is the number of linearly independent vectors $\bv$ such that
\begin{equation}
\label{solution for thresholds}
\bv e^{i \zeta_+(\Lambda_*^{(k)})z}+\mathbf{f}(z)
\end{equation}
is a solution of the one-dimensional problem \eqref{one-dimensional spectral problem}, with $\mathbf{f}(z)=o(1)$ as $z\to +\infty$. The threshold $\Lambda_*^{(k)}$ is called \emph{rigid} if $j_*^{(k)}=0$ and \emph{soft} if $j_*^{(k)}=m_k$

\

{\bf Step 5:} {\it The one-dimensional counting function}. Compute the \emph{one-dimensional counting function}, defined as
\begin{equation}
\label{one dimensional counting function}
N_{\aleph,\mathrm{1D}}(\Lambda;\xi'):=\#\{\text{eigenvalues of \eqref{one-dimensional spectral problem} strictly smaller than}\ \Lambda\}\,.
\end{equation}

\

{\bf Step 6:} {\it The spectral shift function}. Compute the \emph{spectral shift function}, defined as
\begin{equation}
\label{spectral shift function}
\mathrm{shift}_\aleph(\Lambda;\xi'):=\frac{1}{2\pi}\varphi_\aleph(\Lambda;\xi')+N_{\aleph,\mathrm{1D}}(\Lambda;\xi').
\end{equation}

\

Then we have the following.

\begin{thm}
\label{theorem algorithm second Weyl coefficient}
The second Weyl coefficient\footnote{Recall that the second Weyl coefficient and the second coefficient in the asymptotic expansion for the eigenvalue counting function~\eqref{theorem two-term asymptotics with billiards conditions equation 1} are related in accordance with~\eqref{main theorem equation 0} --- see also~\eqref{relation between C and c}.} is given by
\begin{equation}
\label{theorem algorithm second Weyl coefficient equation1}
c_{d-2}^\aleph=\frac{d-1}{2(2\pi)^{d-1}}\int_{T^*\partial M} \mathrm{shift}_\aleph(1;\xi')\, \dr x'\,\dr\xi' =\frac{d-1}{2}\frac{\mathrm{Vol}_{d-1}(\partial M)}{(2\pi)^{d-1}}\int_{\mathbb{R}^{d-1}} \mathrm{shift}_\aleph(1;\xi')\,\dr\xi'\,.
\end{equation}
\end{thm}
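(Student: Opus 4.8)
The plan is to deduce Theorem~\ref{theorem algorithm second Weyl coefficient} as a specialisation to $(\LL,\mathcal{B}_\aleph)$, $\aleph\in\{\mathrm{DF},\mathrm{FD}\}$, of the general algorithm for the second Weyl coefficient of an elliptic system from \cite{Vas84} (with the systems version carried out in \cite{2terms}); the task is therefore to verify that the hypotheses of that framework hold and to track how its abstract boundary density collapses onto the spectral shift function \eqref{spectral shift function}. First I would invoke Fact~\ref{fact about flat space} to pass to a flat domain and localise near $\partial M$ in the tubular coordinates $(x',z)$. The structural input is that the second term of the trace expansion is a \emph{boundary} integral with a local density determined by $\LL$ frozen at a boundary point: freezing $x'$ and setting $\partial_{x'}\mapsto \ir\xi'$ replaces $\LL$ by the transverse family \eqref{L'} and $\mathcal{B}_\aleph$ by \eqref{B'}, so that $c^\aleph_{d-2}$ becomes the integral over the cotangent fibres of a density attached to the one-dimensional problem \eqref{one-dimensional spectral problem}.

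The heart of the argument is to identify that density with the spectral shift function of \eqref{one-dimensional spectral problem} relative to the boundaryless whole-line comparison operator, and I would do this by running Steps~1--6 and checking that each reproduces the corresponding object in the general theory. Steps~1--2 isolate the thresholds \eqref{thresholds} as the branch points of the characteristic roots $\zeta_k^\pm(\Lambda)$ and build the generalised eigenfunctions from outgoing/incoming plane waves, augmented in $I^{(1)}$ by the evanescent longitudinal mode. Step~3 produces the unitary scattering matrices $S^{(k)}(\Lambda)$ on imposing $\mathcal{B}'_\aleph$. The continuous-spectrum part of Krein's spectral shift function is then $\tfrac{1}{2\pi}\arg\det S^{(k)}$, fixed up to the additive constants $\mathfrak{s}^{(k)}$ of Step~4 through the threshold-jump relation \eqref{equation jumps}, in which the rigid/soft dichotomy records whether a bounded non-decaying solution exists at $\Lambda=\Lambda_*^{(k)}$; adding the genuinely discrete count $N_{\aleph,\mathrm{1D}}$ of Step~5 gives $\mathrm{shift}_\aleph$.

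It would then remain to assemble the integral and normalise the constants. Since $\LL'$ and $\mathcal{B}'_\aleph$ are homogeneous in $(\xi',\zeta,\sqrt{\Lambda})$, the shift function is scale invariant, $\mathrm{shift}_\aleph(\Lambda;\xi')=\mathrm{shift}_\aleph(1;\xi'/\sqrt{\Lambda})$, which is exactly what collapses the whole computation onto the single level $\Lambda=1$ appearing in \eqref{theorem algorithm second Weyl coefficient equation1}; flatness makes the density independent of $x'$, so the $x'$-integration contributes the factor $\mathrm{Vol}_{d-1}(\partial M)$. The prefactor $\tfrac{d-1}{2(2\pi)^{d-1}}$ is then the product of the usual $(2\pi)^{-(d-1)}$ phase-space normalisation on $T^*\partial M$ with the heat-to-counting conversion $c_{d-2}=\tfrac{d-1}{2}C_{d-1}$ recorded in \eqref{relation between C and c} and \eqref{main theorem equation 0}.

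The main obstacle is not the bookkeeping but the justification that the originally scalar algorithm of \cite{Vas84,SaVa} transfers faithfully to this system. Three points need care: (i) that the DF and FD problems are Shapiro--Lopatinski elliptic and that the eigenvalues \eqref{eigenvalues of Lprin} of $\LL_\prin$ have constant multiplicity --- both already noted, and precisely what later permits splitting into invariant subspaces; (ii) the threshold analysis of Step~4, namely the correct branch of $\arg\det S^{(k)}$ and the correct shifts $\mathfrak{s}^{(k)}$ enforcing \eqref{equation jumps}, since any slip here feeds directly into $c_{d-2}$; and (iii) integrability of $\mathrm{shift}_\aleph(1;\cdot)$ over $\mathbb{R}^{d-1}$, which rests on its decay as $|\xi'|\to\infty$ and on its controlled behaviour across the thresholds \eqref{thresholds}. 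Granting these, Theorem~\ref{theorem algorithm second Weyl coefficient} follows from the general result by specialisation.
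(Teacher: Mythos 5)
Your proposal follows essentially the same route as the paper, which disposes of this theorem in a single sentence by declaring it a specialisation of \cite[Theorem~2]{Vas86} (extended to systems as in \cite{2terms}); your additional remarks on verifying the Shapiro--Lopatinski condition, the constant multiplicity of the eigenvalues of $\LL_\prin$, the threshold normalisation \eqref{equation jumps}, and the integrability of $\mathrm{shift}_\aleph(1;\cdot)$ are sensible checks that the paper leaves implicit. No genuine discrepancy.
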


Theorem~\ref{theorem algorithm second Weyl coefficient} is a specialisation to the case at hand of \cite[Theorem~2]{Vas86} (once the latter has been extended to systems).

\subsection{Invariant subspaces}
\label{Invariant subspaces}

Implementing the algorithm from subsection~\ref{A streamlined algorithm} as written for an arbitrary dimension $d$ is quite tricky. Rather than applying our algorithm to~\eqref{L u=Lambda u}, \eqref{DF bc} and~\eqref{L u=Lambda u}, \eqref{FD bc} directly, we shall first simplify the problem by decomposing the general $d$-dimensional problem into a two-dimensional analogue plus a much simpler $(d-2)$-dimensional problem. The arguments in this subsection can be traced back, in essence, to observations by Dupuis--Mazo--Onsager \cite{Onsager}, see also \cite[Section~3]{2terms}. The key idea is to decompose elastic waves into two polarised components: one polarised in the plane of propagation and the other normally to it.

\

To this end, suppose we have fixed $\xi'\in \mathbb{R}^{d-1}$, $\xi'\ne 0$, and define
\begin{equation*}
P:=\mathrm{span}\left\{\frac{1}{|\xi'|}\begin{pmatrix}
\xi'
\\
0
\end{pmatrix},
\begin{pmatrix}
0'
\\
1
\end{pmatrix}
\right\}\subset \mathbb{R}^d\,.
\end{equation*}
Let $P^\perp$ be the orthogonal complement of $P$ in $\mathbb{R}^d$. One can easily check the following facts.
\begin{fact}
\label{facts invariant subspaces}
\begin{enumerate}[(i)]
\item
For every $\zeta \in \mathbb{R}$ the eigenvector $\bv_d(\zeta)$ \eqref{eigenvector vd} is an element of $P$.

\item
For every $\zeta \in \mathbb{R}$ the orthogonal subspaces $P$ and $P^\perp$ are invariant subspaces of $\LL_\prin(\xi',\zeta)=\LL'_\prin(\zeta)$ --- recall~\eqref{Lprin}.

\item
The restriction $\left.\LL'_\prin\right|_{P^\perp}$ of $\LL'_\prin$ to $P^\perp$ has one eigenvalue, $\mu|\xi|^2$, of multiplicity $d-2$.

\item
The restriction $\left.\LL'_\prin\right|_{P}$ of $\LL'_\prin$ to $P$ has two eigenvalues, $\mu|\xi|^2$ and $(\lambda+2\mu)|\xi|^2$, each of multiplicity $1$.
\end{enumerate}
\end{fact}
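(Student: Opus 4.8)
The plan is to exploit the explicit rank-one structure of the principal symbol. In the flat Euclidean setting $g_{\alpha\beta}=\delta_{\alpha\beta}$ adopted throughout this section, formula~\eqref{Lprin} lets us write the matrix $\LL'_\prin(\zeta)=\LL_\prin(\xi',\zeta)$ as
\[
\LL_\prin(\xi)=\mu|\xi|^2\,I+(\lambda+\mu)\,\xi\,\xi^{\mathsf T}, \qquad \xi=(\xi',\zeta),
\]
namely a multiple of the identity plus the rank-one matrix $(\lambda+\mu)\,\xi\,\xi^{\mathsf T}$, whose range is $\mathrm{span}\{\xi\}$ and whose kernel is $\xi^\perp$. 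Every claim in Fact~\ref{facts invariant subspaces} will then follow from the single observation that the vector $\xi$ lies in $P$ for all $\zeta$.

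First I would dispose of (i): since
\[
\xi=(\xi',\zeta)=|\xi'|\cdot\frac{1}{|\xi'|}\begin{pmatrix}\xi'\\0\end{pmatrix}+\zeta\begin{pmatrix}0'\\1\end{pmatrix},
\]
the vector $\xi$ lies in $P$ for every $\zeta\in\mathbb{R}$; dividing by $|\xi|$ and recalling~\eqref{eigenvector vd} gives $\bv_d(\zeta)\in P$ at once. Claim (ii) is then immediate: for $w\in P^\perp$ we have $\xi\cdot w=0$, whence $\LL_\prin(\xi)\,w=\mu|\xi|^2\,w\in P^\perp$, so $P^\perp$ is invariant; and for $v\in P$ both summands $\mu|\xi|^2\,v$ and $(\lambda+\mu)(\xi\cdot v)\,\xi$ lie in $P$ (the second because $\xi\in P$), so $P$ is invariant as well.

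Claims (iii) and (iv) are read off from the two restricted blocks. On $P^\perp$ the rank-one term annihilates every vector, so $\left.\LL'_\prin\right|_{P^\perp}=\mu|\xi|^2\,I$ acting on a space of dimension $d-2$, which gives (iii). On $P$ the vector $\xi$ is an eigenvector with eigenvalue $\mu|\xi|^2+(\lambda+\mu)|\xi|^2=(\lambda+2\mu)|\xi|^2$, while any direction within $P$ orthogonal to $\xi$ lies in the kernel of $\xi\,\xi^{\mathsf T}$ and carries the eigenvalue $\mu|\xi|^2$; since $\dim P=2$ these two one-dimensional eigenspaces exhaust the restriction, giving (iv).

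The argument is entirely elementary, so there is no genuine obstacle; the only point deserving a moment's care is that $\xi\in P$ holds for \emph{every} value of $\zeta$, which is precisely what makes $P$ and $P^\perp$ simultaneously invariant for the whole one-parameter family $\{\LL'_\prin(\zeta)\}_{\zeta\in\mathbb{R}}$ and decouples the symbol into an in-plane $2\times2$ block on $P$ and a scalar $(d-2)$-dimensional block on $P^\perp$. This decoupling is what underpins the reduction of the $d$-dimensional spectral problem to a two-dimensional one plus a trivial $(d-2)$-dimensional remainder carried out in the rest of this subsection.
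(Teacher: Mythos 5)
Your argument is correct and is exactly the elementary verification the paper intends: the paper states this as a Fact with the remark ``one can easily check'' and supplies no proof, and your identification of $\LL'_\prin(\zeta)=\mu|\xi|^2 I+(\lambda+\mu)\,\xi\xi^{\mathsf T}$ together with the observation that $\xi=(\xi',\zeta)\in P$ for every $\zeta$ is the standard way to check all four items. No gaps; nothing further is needed.
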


The decomposition $\mathbb{R}^{d}=P\oplus P^\perp$ induces a corresponding decomposition at the level of vector fields. Let 
\begin{equation}
\label{space bold P}
\mathbf{P}:=\{\bu \in C^\infty[0,+\infty)\ |\ \bu(z)\in P \quad \forall z\in[0,+\infty)\}\,,
\end{equation}
\begin{equation}
\label{space bold P perp}
\mathbf{P}^\perp:=\{\bu \in C^\infty[0,+\infty)\ |\ \bu(z)\in P^\perp \quad \forall z\in[0,+\infty)\}\,.
\end{equation}

\begin{prop}
\label{proposition invariant subspaces}
The vector spaces \eqref{space bold P} and \eqref{space bold P perp} are invariant subspaces for the operator \eqref{L'}, compatible with mixed boundary conditions $\mathrm{DF}$ and $\mathrm{FD}$. Namely,
\begin{equation}
\label{proposition invariant subspaces equation 1}
\LL'\mathbf{P}\subset \mathbf{P}, \qquad \LL'\mathbf{P}^\perp\subset \mathbf{P}^\perp,
\end{equation}
and
\begin{equation}
\label{proposition invariant subspaces equation 2}
\left.\mathcal{B}_\aleph' \mathbf{P}\right|_{z=0}\subset P, \qquad \left.\mathcal{B}_\aleph' \mathbf{P}\right|_{z=0}\subset P^\perp, \qquad \aleph\in\{\mathrm{DF},\mathrm{FD}\}\,.
\end{equation}
\end{prop}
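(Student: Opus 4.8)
The plan is to verify each of the four claimed containments by direct computation, exploiting the explicit block structure of $\LL'$ in \eqref{L'} relative to the splitting $\mathbb{R}^d=P\oplus P^\perp$. The observation on which the entire argument rests is that $\LL'$ splits into two pieces: a scalar second-order operator $\mu(|\xi'|^2-\frac{\dr^2}{\dr z^2})$ acting diagonally on all components, which automatically maps any fixed linear subspace of $\mathbb{R}^d$ to itself; and a rank-one coupling term governed entirely by the single scalar quantity $i\xi'\cdot\bu'+\frac{\dr u_d}{\dr z}$ (morally, the divergence of $\bu$). It therefore suffices to track how this scalar, and the column operator multiplying it in \eqref{L'}, behave on each summand.

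First I would establish the two inclusions in \eqref{proposition invariant subspaces equation 1}. For $\bu\in\mathbf{P}$ I parametrise $\bu(z)=(t(z)\,\xi'/|\xi'|,\,s(z))$ with scalar functions $t,s$; the coupling scalar then reduces to $i|\xi'|t+s'$, and applying to it the first-order column operator in \eqref{L'} yields a vector whose first $d-1$ entries are proportional to $\xi'$ and whose last entry is a scalar, i.e.\ an element of $P$. Since the diagonal piece preserves $P$, we get $\LL'\bu\in\mathbf{P}$. For $\bu\in\mathbf{P}^\perp$, writing $\bu(z)=(\mathbf{w}(z),0)$ with $\mathbf{w}(z)\perp\xi'$, the decisive point is that the coupling scalar vanishes identically, $i\xi'\cdot\mathbf{w}+0=0$, so the entire second term of $\LL'$ drops out and only the diagonal piece — which preserves $P^\perp$ — remains; hence $\LL'\bu\in\mathbf{P}^\perp$.

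Next I would verify the boundary compatibility \eqref{proposition invariant subspaces equation 2} by substituting the same parametrisations into the explicit formulae \eqref{B'}. For $\bu\in\mathbf{P}$ each of $\mathcal{B}'_\mathrm{DF}\bu|_{z=0}$ and $\mathcal{B}'_\mathrm{FD}\bu|_{z=0}$ has tangential part proportional to $\xi'$ (namely $t(0)\,\xi'/|\xi'|$, respectively $-\mu t'(0)\,\xi'/|\xi'|$) together with a scalar normal part, hence lies in $P$; for $\bu\in\mathbf{P}^\perp$ the normal-component terms involving $u_d$ vanish since $u_d\equiv0$, leaving tangential parts $\mathbf{w}(0)$, respectively $-\mu\mathbf{w}'(0)$, both orthogonal to $\xi'$, and a vanishing normal component, so the result lies in $P^\perp$. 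I note in passing an apparent typographical slip in the statement: the second inclusion in \eqref{proposition invariant subspaces equation 2} should read $\left.\mathcal{B}_\aleph'\mathbf{P}^\perp\right|_{z=0}\subset P^\perp$.

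I do not anticipate a genuine obstacle here: the proposition is a structural consequence of the fact that, unlike $\Dir$ and $\free$, the mixed conditions $\mathrm{DF}$ and $\mathrm{FD}$ act separately on the tangential and normal parts of $\bu$ and do not couple them. The only point demanding mild care is checking that orthogonality to $\xi'$ survives the $z$-derivatives appearing in the free-type boundary operators — but this is immediate, since $\xi'$ is a fixed covector independent of $z$, so $\mathbf{w}(z)\perp\xi'$ for all $z$ forces $\mathbf{w}'(z)\perp\xi'$ as well.
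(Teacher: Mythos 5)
Your proof is correct and takes essentially the same route as the paper's: parametrise a generic element of $\mathbf{P}$ and of $\mathbf{P}^\perp$ and substitute directly into \eqref{L'} and \eqref{B'}, the key points being that the coupling scalar $i\xi'\cdot\bu'+\tfrac{\dr u_d}{\dr z}$ vanishes on $\mathbf{P}^\perp$ and that the mixed boundary operators do not couple tangential and normal components. The only (harmless) difference is that the paper disposes of \eqref{proposition invariant subspaces equation 1} by citing \cite[Lemma~3.1(a)]{2terms} whereas you verify it by hand, and you correctly identify the typographical slip in the second inclusion of \eqref{proposition invariant subspaces equation 2}, which should indeed read $\left.\mathcal{B}_\aleph'\mathbf{P}^\perp\right|_{z=0}\subset P^\perp$.
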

\begin{proof}
A generic element of $\mathbf{P}$ reads
\begin{equation}
\label{proof proposition invariant subspaces equation 1}
\mathbf{u}_{\parallel}=\frac{1}{\|\xi'\|}\begin{pmatrix}
\xi'
\\
0
\end{pmatrix}
f_1(z)
+
\begin{pmatrix}
0'
\\
1
\end{pmatrix} 
f_2(z)\,, \qquad f_1,f_2 \in C^\infty[0,+\infty)
\end{equation}
whereas a generic element of $\mathbf{P}^\perp$ reads
\begin{equation}
\label{proof proposition invariant subspaces equation 2}
\mathbf{u}_\perp(z)=\sum_{j=1}^{d-2}\begin{pmatrix}
\psi_j
\\
0
\end{pmatrix}
f_j(z), \qquad f_j\in C^\infty[0,+\infty),
\end{equation}
where the $\psi_j$'s, $j=1,\dots,d-2$,  are linearly independent columns in $\mathbb{R}^{d-1}$ orthogonal to $\xi'$. 

Formula~\eqref{proposition invariant subspaces equation 1} follows from \cite[Lemma~3.1(a)]{2terms}.

Substituting~\eqref{proof proposition invariant subspaces equation 1} and~\eqref{proof proposition invariant subspaces equation 2} into \eqref{B'} we obtain
\begin{equation}
\label{10 August 2023 equation 5}
\left.\left(\mathcal{B}'_{DF}\mathbf{u}_\parallel\right)\right|_{z=0}=
\frac{1}{\|\xi'\|}
\begin{pmatrix}
\xi'
\\
0
\end{pmatrix}
f_1(0)
-(\lambda+2\mu)
\begin{pmatrix}
0'
\\
1
\end{pmatrix} 
\frac{\dr f_2}{\dr z}(0)\,,
\end{equation}
\begin{equation}
\label{10 August 2023 equation 6}
\left.\left(\mathcal{B}'_{DF}\mathbf{u}_\perp\right)\right|_{z=0}=
\sum_{j=1}^{d-2}\begin{pmatrix}
\psi_j
\\
0
\end{pmatrix}
\,f_j(0)
\end{equation}
for $\aleph=\mathrm{DF}$ and
\begin{equation}
\label{10 August 2023 equation 3}
\left.\left(\mathcal{B}'_{FD}\mathbf{u}_\parallel\right)\right|_{z=0}=
-\mu\frac{1}{\|\xi'\|}
\begin{pmatrix}
\xi'
\\
0
\end{pmatrix}
\frac{\dr f_1}{\dr z}(0)
+
\begin{pmatrix}
0'
\\
1
\end{pmatrix} 
f_2(0)\,,
\end{equation}
\begin{equation}
\label{10 August 2023 equation 4}
\left.\left(\mathcal{B}'_{FD}\mathbf{u}_\perp\right)\right|_{z=0}=
-\mu\sum_{j=1}^{d-2}\begin{pmatrix}
\psi_j
\\
0
\end{pmatrix}
\frac{\dr f_j}{\dr z}(0)
\end{equation}
for $\aleph=\mathrm{FD}$. Formulae~\eqref{10 August 2023 equation 5}--\eqref{10 August 2023 equation 4} imply that mixed boundary conditions preserve our invariant subspaces, so that~\eqref{proposition invariant subspaces equation 2} holds. This concludes the proof.
\end{proof}

\begin{rem}
The crucial property established by Proposition~\ref{proposition invariant subspaces} is expressed by formula~\eqref{proposition invariant subspaces equation 2}.
An analogue of Proposition~\ref{proposition invariant subspaces} for ``pure" Dirichlet and free boundary conditions was proved in \cite{2terms}. That this extends to a mixture of the two is not clear \emph{a priori}, because both the operator and boundary conditions mix up components in a nontrivial fashion. Indeed, if one, say, imposes different boundary conditions in different directions \emph{along} the boundary, the statement of the proposition is false.
\end{rem}

Let $\Pi_P$ be the orthogonal projection in $\mathbb{R}^d$ onto $P$, and let us define
\begin{equation}
\LL'_{P,\aleph}:=\left.\LL'_\aleph\right|_{\Pi_PD(\LL'_\aleph)} \qquad\text{and}\qquad \LL'_{\perp,\aleph}:=\left.\LL'_\aleph\right|_{(I-\Pi_{P})D(\LL'_\aleph)} 
\end{equation}
to be the restriction of the operator $\LL'$ with boundary conditions $\aleph\in\{\mathrm{DF}, \mathrm{FD}\}$ to the invariant subspaces of its domain $D(\LL'_\aleph)$ induced by \eqref{space bold P} and \eqref{space bold P perp} by combining Proposition~\ref{proposition invariant subspaces} with a standard density argument. It then follows that operator $\LL'_\aleph$ decomposes as
\begin{equation}
\label{decompositon of L'}
\LL'_\aleph=\LL'_{P,\aleph}\oplus\LL'_{\perp,\aleph},
\end{equation}
so that, by the Spectral Theorm, we have
\begin{equation}
\label{decompositon of shift}
\mathrm{shift}_\aleph=\mathrm{shift}_{P,\aleph}+\mathrm{shift}_{\perp,\aleph}\,.
\end{equation}
In other words, the spectral shift function for the problem \eqref{one-dimensional spectral problem} can be obtained by computing the spectral shift functions for $\LL'_{P,\aleph}$ and $\LL'_{\perp,\aleph}$ separately, and adding up the results in the end.

By examining the structure of our equations, it is not hard to see that $\mathrm{shift}_{P,\aleph}$ coincides with the the spectral shift function for the problem \eqref{one-dimensional spectral problem} in the special case $d=2$ (we will revisit this point more formally in subsection~\ref{The proof}). Therefore, in view of Theorem~\ref{theorem algorithm second Weyl coefficient} and formula~\eqref{decompositon of shift}, the decomposition~\eqref{decompositon of L'} reduces the problem at hand to computing 
\begin{enumerate}[(i)]
\item the spectral shift function for \eqref{one-dimensional spectral problem} in two dimensions and
\item the spectral shift function of the restriction of our operator to normally polarised vector fields in arbitrary dimension $d>2$.
\end{enumerate}

\subsection{The proof}
\label{The proof}

We are now ready to prove Theorem~\ref{main theorem}.

\

Due to rotational symmetry, we observe that the spectral shift function will only depend on $\xi'$ via its norm $|\xi'|$. Therefore, it suffices to implement our algorithm and determine the spectral shift function in the special case
\begin{equation}
\label{special xi'}
\overline{\xi'}=\begin{pmatrix}
0
\\
\vdots
\\
0
\\
1
\end{pmatrix}\in \mathbb{R}^{d-1}\,.
\end{equation}
The general case can then be recovered by rescaling the spectral parameter as
\begin{equation}
\label{rescaling the spectral parameter}
\Lambda \mapsto \frac{\Lambda}{|\xi'|^2}
\end{equation}
at the very end.

\

In the next two subsections we will assume that $\xi'$ has been chosen in accordance with \eqref{special xi'}.

\subsubsection{Computing $\mathrm{shift}_{P,\aleph}$: the two-dimensional case}

On account of \eqref{special xi'}, the domain of $\LL'_{P,\aleph}$ is comprised of vector functions of the form
\begin{equation}
\label{vector function essentially 2d}
\begin{pmatrix}
0
\\
\vdots
\\
0
\\
f_1(z)
\\
f_2(z)
\end{pmatrix}\,.
\end{equation}
Furthermore, $\LL'_{P,\aleph}$ acts on \eqref{vector function essentially 2d} as the one-dimensional operator associated with the full elasticity operator in two spatial dimensions. More precisely, let $\LL'_{2,\aleph}$ be the one-dimensional operator \eqref{one-dimensional spectral problem} associated with the operator \eqref{L} for $d=2$ and boundary conditions $\mathcal{B}_\aleph$. Then we have
\begin{equation*}
\LL'_{P,\aleph}\begin{pmatrix}
0
\\
\vdots
\\
0
\\
f_1(z)
\\
f_2(z)
\end{pmatrix}
=
\begin{pmatrix}
0
\\
\vdots
\\
0
\\
\LL'_{2,\aleph}
\begin{pmatrix}
f_1(z)
\\
f_2(z)
\end{pmatrix}
\end{pmatrix}.
\end{equation*}
See also Fact~\ref{facts invariant subspaces}(iv). This implies
\begin{equation}
\label{spectral shifr parallel equal spectral shift 2D}
\mathrm{shift}_{P,\aleph}=\mathrm{shift}_{2,\aleph}.
\end{equation}

The goal of this subsection is then to prove the following.
\begin{prop}
\label{prop spectral shift 2d}
We have\footnote{Observe that in two dimensions formula~\eqref{special xi'} reads $\overline{\xi'}=1$.}
\begin{equation}
\label{prop spectral shift 2d equation 1}
\mathrm{shift}_{2,\aleph}(\Lambda;1)= @\frac14\mathbb{1}_{(\mu, \lambda+2\mu)}(\Lambda) \quad \text{where}\quad 
@
=
\begin{cases}
+ &\text{for }\aleph=\mathrm{DF}\\
- &\text{for }\aleph=\mathrm{FD}
\end{cases}
\end{equation}
and $\mathbb{1}_A$ denotes the characteristic function of the set $A$.
\end{prop}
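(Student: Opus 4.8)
The plan is to implement the six-step algorithm from subsection~\ref{A streamlined algorithm} directly for the two-dimensional operator $\LL'_{2,\aleph}$, with $\overline{\xi'}=1$, for each of the two boundary conditions $\mathrm{DF}$ and $\mathrm{FD}$. First I would record the two thresholds, which by~\eqref{thresholds} are $\Lambda_*^{(1)}=\mu$ and $\Lambda_*^{(2)}=\lambda+2\mu$, delimiting the single intermediate zone $I^{(1)}=(\mu,\lambda+2\mu)$ and the upper zone $I^{(2)}=(\lambda+2\mu,+\infty)$. In $d=2$ the multiplicities are $m_1=m_2=1$, so the scattering ``matrices'' $S^{(1)}(\Lambda)$ and $S^{(2)}(\Lambda)$ are scalar unimodular factors, and $\det S^{(k)}=S^{(k)}$. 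The key structural simplification, which I would stress, is the diagonal form of the boundary operators in~\eqref{B'}: because $\mathcal{B}'_\aleph$ does \emph{not} couple the tangential and normal components (this is precisely the point made in remark~(ii) after Theorem~\ref{main theorem}), imposing the boundary conditions on the generalised eigenfunctions~\eqref{eigenfunctions in I1}--\eqref{eigenfunctions in I2} decouples into two scalar conditions, making the scattering coefficients explicitly computable.

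Next I would carry out Step~3 and Step~4 concretely. In the intermediate zone $I^{(1)}$ there is one propagating (transverse) mode with oscillatory exponentials $e^{\pm i\zeta_1 z}$ and one evanescent (longitudinal) mode $e^{-\sqrt{1-\Lambda/(\lambda+2\mu)}\,z}$; matching against the two decoupled rows of~\eqref{B'} and eliminating the evanescent amplitude $C$ yields a single reflection coefficient $S^{(1)}(\Lambda)=c^+/c^-$ of modulus one, whose argument I would compute in closed form. In the upper zone $I^{(2)}$ both modes propagate and $S^{(2)}$ is diagonal; because the boundary conditions decouple the components, $\det S^{(2)}$ is a product of two scalar reflection phases, each of which I expect to be constant (the reflection being a pure sign $\pm1$ once the components are decoupled), so that $\arg\det S^{(2)}$ contributes no $\Lambda$-dependence beyond a constant. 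I would then fix the shift constants $\mathfrak{s}^{(k)}$ via the jump condition~\eqref{equation jumps}: here I must determine, at each threshold, whether it is soft ($j_*^{(k)}=m_k=1$) or rigid ($j_*^{(k)}=0$) by checking via~\eqref{solution for thresholds} whether a bounded non-decaying threshold solution compatible with the boundary condition exists. This soft/rigid classification is where the $\mathrm{DF}$ versus $\mathrm{FD}$ distinction enters and produces the opposite signs $@$.

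For Step~5 I would observe that $\LL'_{2,\aleph}$ has no genuine (square-integrable) eigenvalues below the continuous spectrum: any candidate eigenfunction must be a purely decaying combination of evanescent modes, and the decoupled boundary conditions~\eqref{B'} force it to vanish, so $N_{\aleph,\mathrm{1D}}(\Lambda;1)\equiv0$. Hence by~\eqref{spectral shift function} the spectral shift reduces to $\mathrm{shift}_{2,\aleph}=\frac{1}{2\pi}\varphi_\aleph$. Assembling the pieces, $\varphi_\aleph$ vanishes for $\Lambda\le\mu$, equals the computed argument of $S^{(1)}$ (plus its shift) on $(\mu,\lambda+2\mu)$, and settles to a constant on $(\lambda+2\mu,+\infty)$; the net effect, after imposing the threshold jumps, should be a function supported on $(\mu,\lambda+2\mu)$ whose value times $\frac{1}{2\pi}$ gives $@\,\tfrac14$, yielding~\eqref{prop spectral shift 2d equation 1}.

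The main obstacle I anticipate is Step~4: correctly pinning down the additive constants $\mathfrak{s}^{(k)}$ and the branch of $\arg$ so that the phase shift is continuous on each zone and matches the prescribed jumps at both thresholds. The soft/rigid analysis at $\Lambda_*^{(1)}=\mu$ and $\Lambda_*^{(2)}=\lambda+2\mu$ is delicate because it requires examining the limiting (multiple-root) solutions of the ODE, and it is exactly this bookkeeping—rather than the reflection-coefficient algebra, which is routine once the components decouple—that determines both the plateau structure and the crucial sign $@$ distinguishing $\mathrm{DF}$ from $\mathrm{FD}$. I would cross-check the final constant $\tfrac14$ against the $d=2$ specialisation of the main formula~\eqref{main theorem equation 1} and against~\eqref{FD less DF} to confirm the sign assignment.
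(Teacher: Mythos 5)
Your proposal is correct and follows essentially the same route as the paper: the paper likewise implements the six-step algorithm verbatim in $d=2$, computes the scattering matrices explicitly (finding them to be the constants $\pm1$ and $\operatorname{diag}(-1,1)$), classifies $\Lambda_*^{(1)}$ and $\Lambda_*^{(2)}$ as soft or rigid via the threshold-solution test to fix the jumps, and proves in a separate lemma that there are no eigenvalues below or embedded in the continuous spectrum so that $N_{\aleph,\mathrm{1D}}\equiv0$. The only detail left open in your plan --- the $\Lambda$-dependence of $\arg S^{(1)}$ --- resolves in the computation to a pure constant, exactly as you anticipate for $S^{(2)}$.
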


In order to prove Proposition~\ref{prop spectral shift 2d} let us implement the algorithm from subsection~\ref{A streamlined algorithm}. The associated one-dimensional spectral problem \eqref{one-dimensional spectral problem} has continuous spectrum $[\mu, +\infty)$, with thresholds
\begin{equation*}
\label{thresholds 2D}
\Lambda_*^{(1)}= \mu\,,
\qquad 
\Lambda_*^{(2)}=(\lambda+2\mu)\,.
\end{equation*}
The latter partition the continuous spectrum into two intervals $I^{(1)}=(\mu, \lambda+2\mu)$ and $I^{(2)}=(\lambda+2\mu, +\infty)$ of multiplicity $1$ and $2$, respectively.

The eigenfunctions of the continuous spectrum read
\begin{multline}
\label{25 July 2023 equation 5}
\mathbf{u}(z;\Lambda)
=
\frac{1}{\sqrt{4\pi} \sqrt{\Lambda}}
\begin{pmatrix}
\left( \frac{\Lambda}{\mu}-1\right)^{1/4}
\left[c_1^- e^{-i \left( \frac{\Lambda}{\mu}-1\right)^{1/2} z} - c_1^+ e^{i \left( \frac{\Lambda}{\mu}-1\right)^{1/2} z}\right]
\\
\left( \frac{\Lambda}{\mu}-1\right)^{-1/4}
\left[c_1^- e^{-i \left( \frac{\Lambda}{\mu}-1\right)^{1/2} z} + c_1^+ e^{i \left( \frac{\Lambda}{\mu}-1\right)^{1/2} z}\right]
\end{pmatrix}
\\
+
C \,
\sqrt{\frac{\lambda+2\mu}{\Lambda}}
\begin{pmatrix}
1
\\
i \left( 1-\frac{\Lambda}{\lambda+2\mu}\right)^{1/2}
\end{pmatrix}
e^{-\sqrt{1-\frac{\Lambda}{\lambda+2\mu}}\, z}
\end{multline}
for $\Lambda\in I^{(1)}$ and 
\begin{multline}
\label{26 July 2023 equation 1}
\mathbf{u}(z;\Lambda)
=
\frac{1}{\sqrt{4\pi} \sqrt{\Lambda}}
\begin{pmatrix}
\left( \frac{\Lambda}{\mu}-1\right)^{1/4}
\left[c_1^- e^{-i \left( \frac{\Lambda}{\mu}-1\right)^{1/2} z} - c_1^+ e^{i  \left( \frac{\Lambda}{\mu}-1\right)^{1/2} z}\right]
\\
\left( \frac{\Lambda}{\mu}-1\right)^{-1/4}
\left[c_1^- e^{-i  \left( \frac{\Lambda}{\mu}-1\right)^{1/2} z} + c_1^+ e^{i \left( \frac{\Lambda}{\mu}-1\right)^{1/2} z}\right]
\end{pmatrix}
\\
+
\frac{1}{\sqrt{4\pi} \sqrt{\Lambda}}
\begin{pmatrix}
\left( \frac{\Lambda}{\lambda+2\mu}-1\right)^{-1/4}
\left[c_2^- e^{-i \left( \frac{\Lambda}{\lambda+2\mu}-1\right)^{1/2} z} + c_2^+ e^{i \left( \frac{\Lambda}{\lambda+2\mu}-1\right)^{1/2} z}\right]
\\
\left( \frac{\Lambda}{\lambda+2\mu}-1\right)^{1/4}
\left[-c_2^- e^{-i \left( \frac{\Lambda}{\lambda+2\mu}-1\right)^{1/2} z} + c_2^+ e^{i \left( \frac{\Lambda}{\lambda+2\mu}-1\right)^{1/2} z}\right]
\end{pmatrix}
\end{multline}
for $\Lambda\in I^{(2)}$.

By imposing that~\eqref{25 July 2023 equation 5} and~\eqref{26 July 2023 equation 1} satisfy mixed boundary conditions \eqref{one-dimensional spectral problem}, \eqref{B'} we obtain the scattering matrices
\begin{equation}
\label{scattering 2d DF}
S_{\mathrm{DF}}(\Lambda)=
\begin{cases}
1 & \text{for} \quad \Lambda\in (\mu, \lambda+2\mu)\,,
\\
\begin{pmatrix}
-1 & 0\\
0 & 1
\end{pmatrix} 
&\text{for} \quad \Lambda\in (\lambda+2\mu, +\infty)
\end{cases}
\end{equation}
and
\begin{equation}
\label{scattering 2d FD}
S_{\mathrm{FD}}(\Lambda)=
\begin{cases}
-1 & \text{for} \quad \Lambda\in (\mu, \lambda+2\mu)\,,
\\
\begin{pmatrix}
-1 & 0\\
0 & 1
\end{pmatrix} 
&\text{for} \quad \Lambda\in (\lambda+2\mu, +\infty)\,.
\end{cases}
\end{equation}

\begin{lem}
\label{lemma thresholds 2D}
We have the following:
\begin{equation}
\label{lemma thresholds 2D equation 1}
\text{the threshold $\Lambda_*^{(1)}$ is}\quad
\begin{cases}
\text{soft} & \text{for} \quad \aleph=\mathrm{DF}\\
\text{rigid} & \text{for} \quad \aleph=\mathrm{FD}
\end{cases}\ ,
\end{equation}
\begin{equation}
\label{lemma thresholds 2D equation 2}
\text{the threshold $\Lambda_*^{(2)}$ is}\quad
\begin{cases}
\text{rigid} & \text{for} \quad \aleph=\mathrm{DF}\\
\text{soft} & \text{for} \quad \aleph=\mathrm{FD}
\end{cases}\ .
\end{equation}
\end{lem}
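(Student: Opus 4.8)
The plan is to unwind the definition of a soft/rigid threshold from Step~4 --- see \eqref{solution for thresholds} --- and reduce the classification to elementary linear algebra. Recall that, with $\overline{\xi'}=1$, the thresholds are $\Lambda_*^{(1)}=\mu$ and $\Lambda_*^{(2)}=\lambda+2\mu$, and that $\zeta_1^+(\Lambda_*^{(1)})=\zeta_2^+(\Lambda_*^{(2)})=0$. Thus at each threshold the leading term $\bv\,e^{i\zeta_+z}$ in \eqref{solution for thresholds} is simply a constant vector $\bv$, and $j_*^{(k)}$ is the number of linearly independent directions $\bv$ for which the ODE $\LL'\bu=\Lambda_*^{(k)}\bu$ admits a solution $\bu(z)=\bv+\mathbf{f}(z)$ with $\mathbf{f}(z)=o(1)$ satisfying the relevant boundary condition in \eqref{B'}. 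Letting $z\to+\infty$ shows that $\bv$ must itself be a constant solution, i.e.\ an eigenvector of $\LL'_\prin(0)$ with eigenvalue $\Lambda_*^{(k)}$; by \eqref{eigenvalues of L'prin} and \eqref{eigenvector vd} this forces $\bv\parallel(0,1)$ at $\Lambda_*^{(1)}$ (the $h_1$-eigendirection, orthogonal to $\bv_d(0)=(1,0)$) and $\bv\parallel(1,0)=\bv_d(0)$ at $\Lambda_*^{(2)}$. Since $m_1=m_2=1$, it then suffices to decide, in each of the four cases, whether the admissible solution space is trivial (rigid) or one-dimensional with $\bv\neq0$ (soft).

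First I would write down the admissible solutions at each threshold, discarding any linearly growing or merely oscillatory contribution (only $o(1)$ tails are allowed). At $\Lambda_*^{(1)}=\mu$ the shear mode $h_1$ gives a double root at $\zeta=0$, contributing the constant $(0,1)$ (its linearly growing companion is discarded), while $h_2$ is \emph{evanescent}: writing $\kappa:=\sqrt{1-\mu/(\lambda+2\mu)}>0$, the decaying branch is $\bv_d(i\kappa)e^{-\kappa z}\propto(1,i\kappa)\,e^{-\kappa z}$, matching \eqref{eigenfunctions in I1}. Hence the general admissible solution is $\bu(z)=A\,(0,1)+B\,(1,i\kappa)\,e^{-\kappa z}$. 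At $\Lambda_*^{(2)}=\lambda+2\mu$ the situation is reversed: $h_2$ degenerates to the constant $(1,0)$, but $h_1$ now has real roots $\zeta=\pm\sqrt{(\lambda+\mu)/\mu}$ and is therefore \emph{propagating}; being $O(1)$ but not $o(1)$, the corresponding oscillatory solutions cannot enter an admissible tail and must be dropped, leaving only $\bu(z)=A\,(1,0)$.

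Second I would impose \eqref{B'} and read off the count. Since in two dimensions $\mathcal{B}'_{\mathrm{DF}}\bu(0)=(u_1(0),\,-(\lambda+2\mu)u_2'(0))$ and $\mathcal{B}'_{\mathrm{FD}}\bu(0)=(-\mu\,u_1'(0),\,u_2(0))$, direct substitution gives the following. At $\Lambda_*^{(1)}$: the DF conditions $u_1(0)=0$, $u_2'(0)=0$ both reduce to $B=0$ and leave $A$ free, so $j_*^{(1)}=1=m_1$ (soft); the FD conditions $u_1'(0)=0$, $u_2(0)=0$ force $B=0$ and then $A=0$, so $j_*^{(1)}=0$ (rigid). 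At $\Lambda_*^{(2)}$, with $\bu=A(1,0)$: the DF condition $u_1(0)=A=0$ forces $A=0$ (rigid), whereas the FD conditions hold identically and leave $A$ free (soft). This is precisely \eqref{lemma thresholds 2D equation 1}--\eqref{lemma thresholds 2D equation 2}.

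The computation itself is short; the genuinely delicate points are two. First, one must correctly pin down the degenerate eigendirection at $\zeta=0$ --- it is the \emph{normal} direction $(0,1)$ for the shear mode but the \emph{tangential} direction $(1,0)$ for the pressure mode --- together with the complex evanescent vector $(1,i\kappa)$. Second, one must recognise that at the upper threshold the complementary shear mode is propagating and so cannot be absorbed into an $o(1)$ remainder, which is what collapses the admissible space to a single constant there. Once these are settled, the fact --- emphasised in the remark following Proposition~\ref{proposition invariant subspaces} --- that the mixed conditions in \eqref{B'} do \emph{not} mix tangential and normal components makes the four evaluations immediate and transparently exhibits the DF$\leftrightarrow$FD duality between the two thresholds.
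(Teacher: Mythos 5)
Your proof is correct and follows essentially the same route as the paper: the same ansatz $c_1(0,1)^T+c_2\,(1,i\kappa)^T e^{-\kappa z}$ with $\kappa=\sqrt{(\lambda+\mu)/(\lambda+2\mu)}$ at $\Lambda_*^{(1)}$ and the single constant $c\,(1,0)^T$ at $\Lambda_*^{(2)}$, followed by direct substitution into \eqref{B'}. The only difference is that you make explicit the justification for the form of the ansatz (the degenerate eigendirections at $\zeta=0$ and the exclusion of the propagating shear mode at the upper threshold), which the paper leaves implicit.
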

\begin{proof}
In accordance with \eqref{solution for thresholds}, for $\Lambda=\Lambda_*^{(1)}$ we seek solutions in the form
\begin{equation}
\label{27 July 2023 equation 3}
c_1 \begin{pmatrix}
0
\\
1
\end{pmatrix}
+
c_2
\begin{pmatrix}
1
\\
i\sqrt{\frac{\lambda+\mu}{\lambda+2\mu}}
\end{pmatrix}
e^{- \sqrt{\frac{\lambda+\mu}{\lambda+2\mu}}z}\,
\end{equation}
for some constants $c_1$ and $c_2$. Substituting \eqref{27 July 2023 equation 3} into our boundary conditions one finds that when $\aleph=\mathrm{DF}$ the function \eqref{27 July 2023 equation 3} satisfies boundary conditions for any $c_1\in \mathbb{R}$ and $c_2=0$, whereas when $\aleph=\mathrm{FD}$ the function \eqref{27 July 2023 equation 3} only satisfies boundary conditions for $c_1=c_2=0$. This gives us \eqref{lemma thresholds 2D equation 1}.

For $\Lambda=\Lambda_*^{(2)}$ we seek solutions in the form
\begin{equation}
\label{27 July 2023 equation 7}
c
\begin{pmatrix}
1
\\
0
\end{pmatrix}
\end{equation}
for some constant $c$. Now, it is easy to see that \eqref{27 July 2023 equation 7} satisfies $\mathrm{FD}$ boundary conditions for any $c\in \mathbb{R}$, whereas it satisfies $\mathrm{DF}$ boundary conditions only for $c=0$. This gives us \eqref{lemma thresholds 2D equation 2} and completes the proof.
\end{proof}

\begin{lem}
\label{lemma no eigenvalues 2d}
The operator $\LL'_{2,\aleph}$ does not have eigenvalues below or embedded into the continuous spectrum for either set of mixed boundary conditions $\aleph=\mathrm{DF},\mathrm{FD}$.
\end{lem}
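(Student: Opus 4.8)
The plan is to recast the statement analytically: a number $\Lambda$ is an eigenvalue of $\LL'_{2,\aleph}$ precisely when the constant-coefficient system $\LL'\bu=\Lambda\bu$ (with $\LL'$ as in~\eqref{L'}, specialised to $d=2$ and $\overline{\xi'}=1$) admits a nonzero solution $\bu\in L^2(0,+\infty;\mathbb{C}^2)$ that decays as $z\to+\infty$ and satisfies $\mathcal{B}'_\aleph\bu|_{z=0}=0$, cf.~\eqref{B'}. Since $\LL'_{2,\aleph}$ is self-adjoint and nonnegative --- being generated by the nonnegative energy form, cf.~\eqref{E}--\eqref{Green identity} --- every such $\Lambda$ lies in $[0,+\infty)$; those strictly below the continuous spectrum lie in $[0,\mu)$, while the remaining (embedded) candidates lie in $[\mu,+\infty)$. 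First I would classify, zone by zone, which fundamental solutions of $\LL'\bu=\Lambda\bu$ are square-integrable, reading off the characteristic exponents attached to the symbol eigenvalues $h_1,h_2$ in~\eqref{eigenvalues of L'prin}: the transverse branch decays iff $\Lambda<\mu$ (with rate $\kappa_1=\sqrt{1-\Lambda/\mu}$), and the longitudinal branch decays iff $\Lambda<\lambda+2\mu$ (with rate $\kappa_2=\sqrt{1-\Lambda/(\lambda+2\mu)}$).

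Second, I would dispatch the embedded candidates. On $I^{(2)}=(\lambda+2\mu,+\infty)$ both branches are oscillatory, so the only $L^2$ solution is $\bu\equiv 0$ and there is nothing to rule out. On $I^{(1)}=(\mu,\lambda+2\mu)$, and at the threshold $\Lambda=\mu$, only the longitudinal branch decays, so every $L^2$ solution is a scalar multiple of the single mode displayed on the last line of~\eqref{25 July 2023 equation 5}; a one-parameter family cannot meet the two scalar conditions in~\eqref{B'} unless its amplitude vanishes. Concretely, the tangential Dirichlet condition (for $\mathrm{DF}$) forces the first component to vanish, and the normal Dirichlet condition (for $\mathrm{FD}$) forces the last component to vanish, each of which is a nonzero multiple of the amplitude; hence the amplitude is $0$. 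At $\Lambda=\lambda+2\mu$ both branches fail to be $L^2$ (one oscillatory, one growing linearly in $z$). Therefore there are no embedded eigenvalues.

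Third --- the genuinely informative case --- I would treat $\Lambda\in(0,\mu)$ (and, for good measure, $\Lambda<0$): here both branches decay, so the admissible solutions form a two-dimensional space spanned by $\bw_1 e^{-\kappa_1 z}$ and $\bw_2 e^{-\kappa_2 z}$ with explicit polarisation vectors $\bw_1$ (transverse) and $\bw_2$ (longitudinal). Imposing~\eqref{B'} yields a $2\times2$ homogeneous \emph{secular system}, and I expect its determinant to collapse, after elementary simplification, to a strictly positive factor times $\Lambda$ for each of $\mathrm{DF}$ and $\mathrm{FD}$; it is thus nonvanishing throughout $(-\infty,0)\cup(0,\mu)$ and forces $\bu=0$. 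This step encodes the physics: for free boundary conditions the analogous determinant has a root (the Rayleigh surface wave), whereas for the mixed conditions it does not.

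The one delicate point, and the step I expect to be the main obstacle, is the borderline value $\Lambda=0$, at which $\kappa_1=\kappa_2$ and the two polarisation vectors become parallel: the secular determinant vanishes, but only because the exponential ansatz degenerates, so this vanishing is spurious rather than a genuine eigenvalue. I would settle $\Lambda=0$ separately and conceptually: a hypothetical eigenfunction would, by the Green identity~\eqref{Green identity} together with the boundary conditions, have vanishing elastic energy, hence a vanishing symmetrised gradient; by strong convexity~\eqref{conditions on lambda and mu} such a field is a rigid motion, and no nonzero rigid motion has the separated form required here while decaying as $z\to+\infty$, so it must vanish identically. Collecting the three zones then yields the absence of eigenvalues below and embedded into the continuous spectrum for both $\mathrm{DF}$ and $\mathrm{FD}$.
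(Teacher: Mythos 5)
Your proposal is correct and follows essentially the same route as the paper: a zone-by-zone construction of the decaying solutions of $\LL'\bu=\Lambda\bu$, followed by the observation that imposing~\eqref{B'} yields secular determinants (which indeed reduce to a positive factor times $\Lambda$ for both $\mathrm{DF}$ and $\mathrm{FD}$) with no roots in the relevant intervals, and a direct check at the thresholds. The only difference is that you additionally dispose of $\Lambda\le 0$ via the degenerate-exponent/rigid-motion discussion, whereas the paper starts from $\Lambda\in(0,\mu)$ and leaves the nonnegativity of $\LL'_{2,\aleph}$ implicit --- a harmless refinement rather than a genuinely different argument.
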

\begin{proof}
For $\Lambda\in (0,\mu)$ we seek an eigenfunction in the form
\begin{equation}
\label{31 July 2023 equation 1}
c_1
\begin{pmatrix}
-i\left( 1-\frac{\Lambda}{\mu}\right)^{1/2}
\\
1
\end{pmatrix}
e^{- \sqrt{1-\frac{\Lambda}{\mu}} z}
\\
+
c_2 \,
\begin{pmatrix}
1
\\
i \left( 1-\frac{\Lambda}{\lambda+2\mu}\right)^{1/2}
\end{pmatrix}
e^{-\sqrt{1-\frac{\Lambda}{\lambda+2\mu}}\, z}\,.
\end{equation}
Substituting \eqref{31 July 2023 equation 1} into the $\mathrm{FD}$ boundary conditions we obtain
\begin{equation*}
\label{31 July 2023 equation 2}
\begin{pmatrix}
i \left( 1-\frac{\Lambda}{\mu}\right) & - \left( 1-\frac{\Lambda}{\lambda+2\mu}\right)^{1/2}
\\
1 & i \left( 1-\frac{\Lambda}{\lambda+2\mu}\right)^{1/2}
\end{pmatrix}
\begin{pmatrix}
c_1
\\
c_2
\end{pmatrix}
=
\begin{pmatrix}
0
\\
0
\end{pmatrix}\,.
\end{equation*}
The latter has a nontrivial solution if and only if
\begin{equation}
\label{31 July 2023 equation 3}
\chi(\Lambda)=\left( 1-\frac{\Lambda}{\lambda+2\mu}\right)^{1/2} \left[-\left( 1-\frac{\Lambda}{\mu}\right)+1 \right]
=\left( 1-\frac{\Lambda}{\lambda+2\mu}\right)^{1/2} \frac{\Lambda}{\mu} =0\,.
\end{equation}
But the characteristic equation \eqref{31 July 2023 equation 3} does not admit solutions in $(0,\mu)$. The case of $\mathrm{DF}$ is analogous, with no eigenfunctions in $(0,\mu)$, and we omit the details. All in all, there are no solutions below bottom of the essential spectrum for either set of mixed boundary conditions.

The threshold $\Lambda=\mu$ is not an eigenvalue. Indeed, an eigenfunction of the form
\begin{equation*}
\label{31 July 2023 equation 4}
c
\begin{pmatrix}
1
\\
i\sqrt{\frac{\lambda+\mu}{\lambda+2\mu}}
\end{pmatrix}
e^{- \sqrt{\frac{\lambda+\mu}{\lambda+2\mu}}z}
\end{equation*}
satisfies boundary conditions~\eqref{B'} only if $c=0$.

For $\Lambda\in (\mu,\lambda+2\mu)$ we seek an eigenfunction in the form
\begin{equation*}
\label{31 July 2023 equation 5}
c \,
\begin{pmatrix}
1
\\
i \left( 1-\frac{\Lambda}{\lambda+2\mu}\right)^{1/2}
\end{pmatrix}
e^{-\sqrt{1-\frac{\Lambda}{\lambda+2\mu}}\, z}\,.
\end{equation*}
Once again, the latter satisfies either set of mixed boundary conditions~\eqref{B'} only if $c=0$. Therefore, there are no eigenvalues in $(\mu,\lambda+2\mu)$.

Finally, it is easy to see that $\Lambda=\lambda+2\mu$ is not an eigenvalue, and that there are no square integrable solutions of our one-dimensional spectral problem for values of the spectral parameter $\Lambda>\lambda+2\mu$.
\end{proof}

Now, Lemma~\ref{lemma no eigenvalues 2d} implies that the one-dimensional counting function vanishes identically. Therefore, on account of \eqref{spectral shift function} and \eqref{phase shift general}--\eqref{equation jumps}, combining \eqref{scattering 2d DF}, \eqref{scattering 2d FD} with Lemma~\ref{lemma thresholds 2D} one arrives at~\eqref{prop spectral shift 2d equation 1}.

\subsubsection{Computing $\mathrm{shift}_{\perp,\aleph}$: normally polarised waves}

Let us now examine our one-dimensional spectral problem restricted to the subspace $\mathbf{P}^\perp$ \eqref{space bold P perp}. The goal of this subsection is to prove the following.
\begin{prop}
\label{prop spectral shift perp}
We have
\begin{equation}
\label{prop spectral shift perp equation 1}
\mathrm{shift}_{\perp,\aleph}(\Lambda;\overline{\xi'})= @\frac{d-2}{4}\mathbb{1}_{(\mu, +\infty)}(\Lambda) \quad \text{where}\quad 
@
=
\begin{cases}
- &\text{for }\aleph=\mathrm{DF}\\
+ &\text{for }\aleph=\mathrm{FD}
\end{cases}.
\end{equation}
\end{prop}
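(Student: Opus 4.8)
The plan is to exploit the decoupling afforded by the invariant subspace $\mathbf{P}^\perp$ to reduce the computation to $d-2$ identical scalar half-line problems. First I would observe that, for a vector field $\bu_\perp\in\mathbf{P}^\perp$ written as in~\eqref{proof proposition invariant subspaces equation 2}, one has $u_d\equiv 0$ and $\bu'\perp\xi'$, so the scalar factor $i\xi'\cdot\bu'+\dr u_d/\dr z$ multiplying the longitudinal term in~\eqref{L'} vanishes identically. Consequently $\LL'$ acts on each component $f_j$ simply as $\mu(|\xi'|^2-\dr^2/\dr z^2)$, and with $\overline{\xi'}$ as in~\eqref{special xi'} the eigenvalue equation reads $-\mu f_j''=(\Lambda-\mu)f_j$. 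Thus $\LL'_{\perp,\aleph}$ is an orthogonal direct sum of $d-2$ copies of the scalar operator $-\mu\,\dr^2/\dr z^2+\mu$ on $[0,+\infty)$; the continuous spectrum is $[\mu,+\infty)$ with constant multiplicity $d-2$ and there is a single threshold $\Lambda_*^{(1)}=\mu$.

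Next I would read off the boundary conditions directly from~\eqref{10 August 2023 equation 6} and~\eqref{10 August 2023 equation 4}: the condition $\mathcal{B}'_\aleph\bu_\perp|_{z=0}=0$ forces $f_j(0)=0$ (Dirichlet) for $\aleph=\mathrm{DF}$ and $f_j'(0)=0$ (Neumann) for $\aleph=\mathrm{FD}$. Seeking generalised eigenfunctions $c^+e^{i\zeta z}+c^-e^{-i\zeta z}$ with $\zeta=\sqrt{\Lambda/\mu-1}$ and imposing these conditions, the $1\times 1$ scattering coefficient is \emph{constant} in $\Lambda$: it equals $-1$ in the Dirichlet (DF) case and $+1$ in the Neumann (FD) case.

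The key step is the threshold analysis feeding into the jump condition~\eqref{equation jumps}. At $\Lambda=\mu$ the bounded threshold solutions of the form~\eqref{solution for thresholds} are exactly the constants (the solution linear in $z$ is excluded by boundedness). A nonzero constant satisfies the Neumann condition but violates the Dirichlet one, so per scalar copy $m_1=1$ with $j_*^{(1)}=0$ (rigid) for DF and $j_*^{(1)}=1$ (soft) for FD. Hence~\eqref{equation jumps} prescribes a phase-shift jump at $\mu$ equal to $\pi(j_*^{(1)}-\tfrac12)$, i.e.\ $-\pi/2$ for DF and $+\pi/2$ for FD; since the scattering coefficient carries no further $\Lambda$-dependence and $\varphi_\aleph=0$ below the threshold, the phase shift is constant and equal to $\mp\pi/2$ per copy on $(\mu,+\infty)$ (with the DF/FD sign). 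Finally I would verify, exactly as in Lemma~\ref{lemma no eigenvalues 2d}, that the Dirichlet/Neumann half-line problems admit no $L^2$ eigenvalues, so the one-dimensional counting function entering~\eqref{spectral shift function} vanishes. Summing the $d-2$ copies and dividing by $2\pi$ then yields $\mathrm{shift}_{\perp,\aleph}=\pm\tfrac{d-2}{4}\,\mathbb{1}_{(\mu,+\infty)}$, with the sign $-$ for DF and $+$ for FD, which is precisely~\eqref{prop spectral shift perp equation 1}.

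I expect the only genuinely delicate point to be the bookkeeping of the threshold jump. Because the scattering coefficient is $\Lambda$-independent, the \emph{entire} value of the phase shift above $\mu$ is produced by the soft/rigid classification via~\eqref{equation jumps}, rather than by any variation of $\arg\det S$; one must therefore pin down $j_*^{(1)}$ and $m_1$ with care and track the sign convention that separates DF from FD.
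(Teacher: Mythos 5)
Your proposal is correct and follows essentially the same route as the paper: reduction of $\LL'_{\perp,\aleph}$ to $\mu(1-\dr^2/\dr z^2)$ with scalar Dirichlet (DF) / Neumann (FD) conditions, the constant scattering matrix $\mp I_{d-2}$, the rigid/soft classification of the single threshold $\Lambda_*=\mu$ via constant threshold solutions, and the absence of eigenvalues, assembled through the jump condition~\eqref{equation jumps}. Your per-copy bookkeeping of the jump ($\mp\pi/2$ per scalar component, summed over $d-2$ copies) is just a componentwise rephrasing of the paper's treatment of the full $(d-2)$-dimensional system and yields the identical result.
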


In order to prove Proposition~\ref{prop spectral shift perp} let us implement the algorithm from subsection~\ref{A streamlined algorithm}.

One can easliy check that when restricted to normally polarised waves the operator $\LL'$ acts as
\begin{equation*}
\label{L' perp}
\LL'_{\perp,\aleph}\bu=\mu\left(1-\frac{\dr^2}{\dr z^2}\right)\bu.
\end{equation*}
This implies that the one-dimensional spectral problem \eqref{one-dimensional spectral problem} for $\LL'=\LL'_{\perp,\aleph}$ has only one threshold
\begin{equation}
\label{threshold d-2}
\Lambda_*=\mu,
\end{equation}
and the essential spectrum $[\mu, +\infty)$ has multiplicity $d-2$.

For $\Lambda>\mu$ the eigenfunctions of the continuous spectrum read
\begin{equation}
\label{9.1 - Second subspace eigenfunction}
\mathbf{u}(z;\Lambda) = \sum_{j=1}^{d-2} \mathbf{e}_j \left(c_j^+ e^{i\sqrt{\frac{\Lambda}{\mu}-1}} + c_j^- e^{-i\sqrt{\frac{\Lambda}{\mu}-1}} \right)\,,
\end{equation}
where $(\mathbf{e}_j)_\alpha=\delta_{j\alpha}$.

By imposing that~\eqref{9.1 - Second subspace eigenfunction} satisfy mixed boundary conditions \eqref{B'} we obtain the scattering matrices
\begin{equation}
\label{scattering matrix d-2}
S_\aleph(\Lambda)=@ I_{d-2}\,, \qquad \quad
@=\begin{cases}
- & \text{for}\ \,\aleph=\mathrm{DF}\\
+ & \text{for}\ \,\aleph=\mathrm{FD}
\end{cases}, \quad \Lambda\in (\mu, +\infty).
\end{equation}
where $I_{d-2}$ is the $(d-2)$-dimensional identity matrix. 

\begin{lem}
\label{lemma thresholds d-2}
The threshold \eqref{threshold d-2} is
\begin{equation}
\label{lemma thresholds d-2 equation 1}
\begin{cases}
\text{rigid} & \text{for} \quad \aleph=\mathrm{DF}\\
\text{soft} & \text{for} \quad \aleph=\mathrm{FD}
\end{cases}\ .
\end{equation}
\end{lem}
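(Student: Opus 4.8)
The plan is to pin down the integer $j_*$ appearing in \eqref{solution for thresholds} directly, since, by the definitions of \emph{rigid} and \emph{soft} given in Step~4 of the algorithm, the threshold \eqref{threshold d-2} is rigid precisely when $j_*=0$ and soft precisely when $j_*=m=d-2$. First I would exploit the fact that on $\mathbf{P}^\perp$ the operator $\LL'_{\perp,\aleph}$ acts diagonally, as $\mu\left(1-\frac{\dr^2}{\dr z^2}\right)$ on each of the $d-2$ transverse scalar components. Consequently the threshold equation $\LL'_{\perp,\aleph}\bu=\mu\,\bu$ decouples into $d-2$ identical scalar problems $f_j''=0$, whose general solution is the affine function $f_j(z)=a_j+b_j z$.

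Next I would impose the structural requirement of \eqref{solution for thresholds}. At $\Lambda_*=\mu$ with $|\overline{\xi'}|=1$ the transverse wavenumber satisfies $\zeta_1^+(\mu)=\sqrt{\tfrac{\mu}{\mu}-1}=0$, so the factor $e^{\ir\zeta_1^+ z}$ degenerates to a constant and \eqref{solution for thresholds} asks for a solution of the form $\bv+\mathbf{f}(z)$ with $\mathbf{f}(z)=o(1)$ as $z\to+\infty$. This forces $b_j=0$ for every $j$: the linearly growing branch $z\,\mathbf{e}_j$, which is the second solution associated with the double characteristic root at the threshold, must be discarded. Hence the only admissible threshold candidates are the constant vector fields $\bu=\mathbf{a}\in P^\perp$.

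The remaining step is to count how many such constants survive the boundary conditions, which I would read off from \eqref{10 August 2023 equation 6} and \eqref{10 August 2023 equation 4} specialised to $f_j\equiv a_j$. For $\aleph=\mathrm{DF}$ the boundary operator returns $\sum_{j=1}^{d-2}\begin{pmatrix}\psi_j\\0\end{pmatrix}f_j(0)=\sum_{j=1}^{d-2}\begin{pmatrix}\psi_j\\0\end{pmatrix}a_j$; since the $\psi_j$ are linearly independent this vanishes only when $\mathbf{a}=0$, whence $j_*=0$ and the threshold is rigid. For $\aleph=\mathrm{FD}$ the boundary operator returns $-\mu\sum_{j=1}^{d-2}\begin{pmatrix}\psi_j\\0\end{pmatrix}\frac{\dr f_j}{\dr z}(0)$, which vanishes automatically for constants, so all $d-2$ independent directions in $P^\perp$ are admissible, giving $j_*=d-2=m$ and a soft threshold. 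This yields \eqref{lemma thresholds d-2 equation 1}.

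I do not anticipate a genuine obstacle here: the diagonalisation turns the $\mathrm{DF}$ and $\mathrm{FD}$ conditions into, respectively, scalar Dirichlet and Neumann conditions on each transverse component, and the computation is essentially trivial once the reduction is in place. The only point requiring care is the degeneracy of the characteristic root at the threshold — one must remember to exclude the unbounded solution $z\,\mathbf{e}_j$ before testing the boundary conditions — together with the bookkeeping that soft here corresponds to the full multiplicity $m=d-2$ rather than to $1$.
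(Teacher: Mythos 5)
Your argument is correct and follows essentially the same route as the paper: one seeks threshold solutions of the form \eqref{solution for thresholds}, which at $\Lambda_*=\mu$ reduces to constant vector fields valued in $P^\perp$, and then tests these against \eqref{10 August 2023 equation 6} and \eqref{10 August 2023 equation 4}, finding that DF forces all constants to vanish ($j_*=0$, rigid) while FD imposes no constraint ($j_*=d-2=m$, soft). The extra care you take in discarding the linearly growing branch of the degenerate characteristic root is implicit in the paper's ansatz but is a worthwhile point to make explicit.
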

\begin{proof}
In accordance with \eqref{solution for thresholds}, for $\Lambda=\Lambda_*$ we seek solutions in the form
\begin{equation}
\label{proof lemma thresholds d-2 equation 1}
\begin{pmatrix}
c_1 \\ c_2 \\ ... \\ c_{d-2} \\ 0 \\ 0
\end{pmatrix}
\end{equation}
for some constants $c_j$, $j=1,\ldots, d-2$. Substituting the latter into \eqref{B'} one immediately sees that~\eqref{proof lemma thresholds d-2 equation 1} satisfies $\mathrm{DF}$ boundary conditions only if all the $c_j$'s vanish, whereas it satisfies $\mathrm{FD}$ boundary conditions for any choice of constants $c_j$'s. Hence, one has \eqref{lemma thresholds d-2 equation 1}.
\end{proof}

\begin{lem}
\label{lemma no eigenvalues perp}
The operator $\LL'_{\perp,\aleph}$, $\aleph\in\{\mathrm{DF}, \mathrm{FD}\}$, does not have eigenvalues, either below or embedded into the continuous spectrum.
\end{lem}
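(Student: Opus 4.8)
The plan is to exploit the fact that, on the subspace $\mathbf{P}^\perp$ of normally polarised waves, the operator $\LL'_{\perp,\aleph}$ acts diagonally as $\mu\left(1-\frac{\dr^2}{\dr z^2}\right)$ on each of the $d-2$ transverse components. This reduces the search for eigenvalues to a scalar problem: a genuine eigenfunction (a bound state, as opposed to a generalised eigenfunction of the continuous spectrum) corresponds to an $L^2[0,+\infty)$ solution of $-\mu f''+\mu f=\Lambda f$, that is $f''=\left(1-\frac{\Lambda}{\mu}\right)f$, for each component $f=f_j$, subject to the boundary condition at $z=0$ inherited from~\eqref{B'}.

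First I would treat the case $\Lambda<\mu$, below the continuous spectrum. Here $1-\frac{\Lambda}{\mu}>0$, so the only solution decaying at $+\infty$ --- hence the only candidate in $L^2$ --- is $f_j(z)=c_j\,e^{-\kappa z}$ with $\kappa=\sqrt{1-\frac{\Lambda}{\mu}}>0$. Reading off the restriction of~\eqref{B'} to $\mathbf{P}^\perp$, recorded explicitly in~\eqref{10 August 2023 equation 6} and~\eqref{10 August 2023 equation 4}, the $\mathrm{DF}$ condition forces $f_j(0)=0$ while the $\mathrm{FD}$ condition forces $\frac{\dr f_j}{\dr z}(0)=0$; since $\kappa>0$, either requirement immediately yields $c_j=0$ for every $j$. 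Thus no eigenvalue lies below the threshold for either set of mixed boundary conditions.

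Next I would dispose of the threshold $\Lambda=\mu$ and the region $\Lambda>\mu$. At $\Lambda=\mu$ the equation degenerates to $f_j''=0$, whose solutions $a_j+b_j z$ belong to $L^2[0,+\infty)$ only if they vanish identically, so the threshold is not an eigenvalue. For $\Lambda>\mu$ the quantity $1-\frac{\Lambda}{\mu}$ is negative and the solutions are the oscillatory exponentials $e^{\pm i\sqrt{\Lambda/\mu-1}\,z}$; no nontrivial linear combination of these is square integrable on the half-line, so there are no embedded eigenvalues either.

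Since each step reduces to an elementary scalar ODE computation, I do not anticipate a genuine obstacle; the only point requiring care is applying the correct component of~\eqref{B'} in each of the $\mathrm{DF}$ and $\mathrm{FD}$ cases, which the decomposition~\eqref{10 August 2023 equation 6}--\eqref{10 August 2023 equation 4} makes transparent. The argument mirrors, but is considerably simpler than, the one used for $\LL'_{2,\aleph}$ in Lemma~\ref{lemma no eigenvalues 2d}, precisely because the transverse block is diagonal and scalar.
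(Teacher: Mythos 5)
Your proposal is correct and follows essentially the same route as the paper: reduce to the decoupled scalar ODE on each transverse component, observe that for $\Lambda<\mu$ the only decaying solution is a pure exponential killed by either boundary condition (Dirichlet trace for $\mathrm{DF}$, Neumann trace for $\mathrm{FD}$), and note that for $\Lambda\ge\mu$ there are no square-integrable solutions at all. Your explicit treatment of the threshold via $f''=0$ is a minor elaboration of a step the paper dispatches in one line, but the argument is the same.
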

\begin{proof}
For $\Lambda\in (0,\mu)$ we seek an eigenfunction in the form
\begin{equation*}
\begin{pmatrix}
c_1 \\ c_2 \\ ... \\ c_{d-2} \\ 0 \\ 0
\end{pmatrix} e^{-\sqrt{1-\frac{\Lambda}{\mu}}}\,.
\end{equation*}
But the latter does not satisfy either set of mixed boundary conditions unless all the constants $c_j$ vanish; therefore, there are no eigenvalues in $(0,\mu)$.

Finally, it is easy to see that there are no square integrable solutions of the one-dimensional spectral problem for values of the spectral parameter $\Lambda\ge \mu$. Hence, there are no eigenvalues in $[\mu,+\infty)$ either.
\end{proof}

As in the previous subsection, Lemma~\ref{lemma no eigenvalues perp} implies that the one-dimensional counting function vanishes identically. Therefore, on account of \eqref{spectral shift function} and \eqref{phase shift general}--\eqref{equation jumps}, combining \eqref{scattering matrix d-2} with Lemma~\ref{lemma thresholds d-2} one arrives at~\eqref{prop spectral shift perp equation 1}.

\subsubsection{Putting things together}
\label{Putting things together}

Combining Proposition~\ref{prop spectral shift 2d}, Proposition~\ref{prop spectral shift perp}, and formulae~\eqref{rescaling the spectral parameter}, \eqref{decompositon of shift}, \eqref{spectral shifr parallel equal spectral shift 2D} we obtain
\begin{equation}
\label{spectral shift general}
\mathrm{shift}_\aleph(\Lambda;\xi')
=
@\begin{cases}
0 & \text{for }\ \Lambda<\mu \,|\xi'|^2
\\
\frac{d-3}{4} & \text{for }\ \mu \,|\xi'|^2<\Lambda<(\lambda+2\mu)\, |\xi'|^2
\\
\frac{d-2}{4} & \text{for }\ \Lambda>(\lambda+2\mu)\, |\xi'|^2
\end{cases}
\quad \text{with}\quad
@=\begin{cases}
- & \text{for}\ \,\aleph=\mathrm{DF},\\
+ & \text{for}\ \,\aleph=\mathrm{FD}.
\end{cases}
\end{equation}
Substituting \eqref{spectral shift general} into \eqref{theorem algorithm second Weyl coefficient equation1} and integrating we arrive at~\eqref{main theorem equation 1}. This completes the proof of Theorem~\ref{main theorem}.

\section{Explicit examples}
\label{Explicit examples}

In this section we verify our formulae for the second Weyl coefficients by examining the asymptotics of the eigenvalue counting function for explicit examples: the disk, and flat cylinders in dimensions $d=2$ and $d=3$.

\

The choice of examples is motivated by the fact that they possess the following properties.
\begin{enumerate}[(i)]
\item 
They allow for separation of variables for the operator of linear elasticity with mixed boundary conditions.

\item
They satisfy the conditions on branching Hamiltonian billiards from Theorem~\ref{theorem two-term asymptotics with billiards conditions}, so that the two-term asymptotics for the counting function is valid.

\item For flat cylinders, variables separate completely and one can write down the full spectrum explicitly. Therefore, unlike in \cite{2terms}, we can verify our formulae \emph{analytically}, using asymptotic expansions for certain number-theoretic series determined by our eigenvalues.
\end{enumerate}

\subsection{Two-dimensional examples}
\label{Two-dimensional examples}

\subsubsection{The disk}
\label{The disk}

Let $M\subset \mathbb{R}^2$ be the unit disk and let us work in standard polar coordinates $(r,\theta)$. Following \cite[Chapter~XIII]{MoFe} (see also~\cite{LMS}), we introduce a fictitious third coordinate $z$ orthogonal to the disk and seek solutions in the form
\begin{equation}
\label{solutions disk}
\bu(r,\theta)=\grad \psi_1(r,\theta)+ \curl\left(\psi_2(r,\theta)\,\hat{\mathbf{z}}\right)\,,
\end{equation}
where $\hat{\mathbf{z}}$ is the unit vector in the direction of $z$ and $\psi_j$, $j=1,2$, are auxiliary scalar potentials. Substituting \eqref{solutions disk} into \eqref{L u=Lambda u} one obtains that the scalar potentials must satisfy the Helmholtz equations
\begin{equation}
\label{helmholz 2d disk}
-\Delta \psi_j=\omega_{j,\Lambda} \psi_j, \qquad j=1,2,
\end{equation}
\begin{equation}
\label{omegas helmholtz}
\omega_{1,\Lambda}:= \frac{\Lambda}{\lambda+2\mu}, \qquad \omega_{2,\Lambda}:= \frac{\Lambda}{\mu}\,.
\end{equation}
But now the general solution to \eqref{helmholz 2d disk} regular at $r=0$ reads
\begin{equation}
\label{solution helmholtz 2d}
\psi_j(r,\phi)=c_{j,0} J_0\left(\sqrt{\omega_{j,\Lambda}} r\right)+\sum_{k=1}^\infty J_k\left(\sqrt{\omega_{j,\Lambda}} r\right) \left(c_{j,k,+} \mathrm{e}^{\mathrm{i}k\theta}+c_{j,k,-} \mathrm{e}^{-\mathrm{i}k\theta}\right),
\end{equation}
where the $J_k$'s are Bessel functions of the first kind. By substituting \eqref{solution helmholtz 2d} and imposing that \eqref{solutions disk} satisfies $\mathrm{DF}$ boundary conditions
\[
\left.\begin{pmatrix}
(\lambda +2 \mu ) \,\partial_r u_1+\lambda  (u_1+   \partial_\theta u_2)
   \\
   u_2
\end{pmatrix}\right|_{r=1} =
   \begin{pmatrix}
   0
   \\
   0
\end{pmatrix}\,,
\]
one obtains the secular equation
\begin{multline}
\label{secular equation DN}
\mu k J_k\left(\sqrt{\omega_{2,\Lambda}}\right) \left[\omega_{2,\Lambda}
   J_k\left(\sqrt{\omega_{1,\Lambda}}\right)-2  \sqrt{\omega_{1,\Lambda}} J_{k+1}\left(\sqrt{\omega_{1,\Lambda}}\right)\right]
\\   
   +\mu \sqrt{\omega_{2,\Lambda}} J_{k+1}\left(\sqrt{\omega_{2,\Lambda}}\right) \left[2 \sqrt{\omega_{1,\Lambda}}
   J_{k+1}\left(\sqrt{\omega_{1,\Lambda}}\right)-(2 k +\omega_{2,\Lambda})
   J_k\left(\sqrt{\omega_{1,\Lambda}}\right)\right]=0.
\end{multline}

For $\mathrm{FD}$ boundary conditions one obtains an analogous formula, which we omit.

One can use Mathematica to find the zeroes of \eqref{secular equation DN} (and the corresponding equation for $\mathrm{FD}$ boundary conditions) numerically and compute the eigenvalue counting function $N_\aleph(\Lambda)$, $\aleph\in\{\mathrm{DF}, \mathrm{FD}\}$, for reasonably large values of the parameter $\Lambda$.

\

The numerical results are shown in Figures~\ref{fig:diskDN} and~\ref{fig:diskND}.

\begin{figure}[htb] 
\begin{center}
\includegraphics[width=\textwidth]{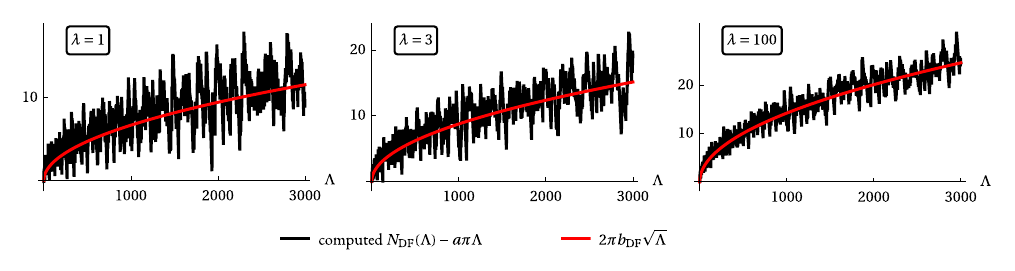}
\caption{The DF eigenvalue problem for the disk. In all images $\mu=1$.}\label{fig:diskDN}
\end{center}
\end{figure}

\begin{figure}[htb] 
\begin{center}
\includegraphics[width=\textwidth]{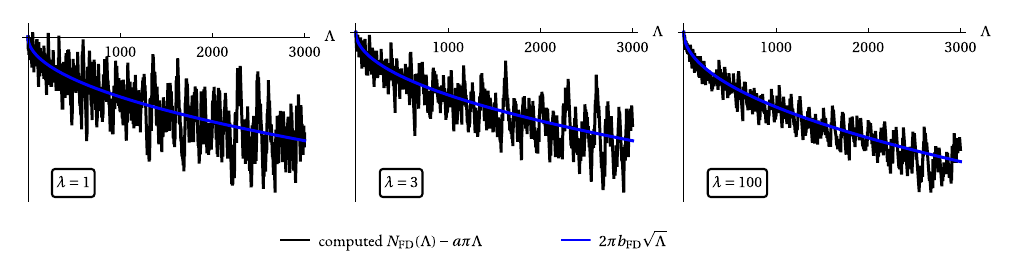}
\caption{The FD eigenvalue problem for the disk. In all images $\mu=1$.}\label{fig:diskND}
\end{center}
\end{figure}

\subsubsection{Flat cylinders}
\label{Flat cylinders 2D}

Consider the two-dimensional cylinder $M:=\mathbb{T}\times [0,h]$, where $\mathbb{T}$ is the one-dimensional torus and $h>0$ is the height of the cylinder, equipped with coordinates $(x^1,x^2)\in [0,2\pi)\times[0,h]$. Of course,
\begin{equation}
\label{volume surface area 2d cylnder}
\operatorname{Vol}_2(M)=2\pi h, \qquad \operatorname{Vol}_1(\partial M)=4\pi.
\end{equation}
We separate variables by seeking a solution in the form
\begin{equation}
\label{solutions 2D cylinder}
\bu(x^1,x^2)=\grad \psi_1(x^1,x^2)+ \curl\left(\psi_2(x^1,x^2)\,\hat{\mathbf{z}}\right)\,,
\end{equation}
where $\hat{\mathbf{z}}$ is the unit vector in the auxiliary coordinate $x^3$ (orthogonal to the $(x^1,x^2)$-plane) and $\psi_j$, $j=1,2$, are scalar potentials. As in subsection~\ref{The disk}, the scalar potentials satisfy Helmholtz equation \eqref{helmholz 2d disk}, \eqref{omegas helmholtz}. The general solution for $\psi_j$, $j=1,2$, reads
\begin{equation}
\label{general solution psi 2d cylinder}
\psi_j(x^1,x^2)=\sum_{\xi\in \mathbb{Z}} \left(
c_{j,\xi,+}e^{i\left(x^1 \xi+\sqrt{\omega_{j, \Lambda}-\xi^2} x^2 \right)} 
+
c_{j,\xi,-}e^{i\left(x^1 \xi-\sqrt{\omega_{j, \Lambda}-\xi^2} x^2 \right)}
\right)\,.
\end{equation}

Substituting \eqref{general solution psi 2d cylinder} into \eqref{solutions 2D cylinder} and, in turn, imposing boundary conditions $\mathcal{B}_{\mathrm{DF}}$ yields the secular equation
\begin{equation}
\label{secular equation 2d cylinder DF}
\Lambda ^2 
   \left(\frac{\Lambda}{\mu} -\xi^2\right) \sin\left(h
   \sqrt{\frac{\Lambda}{\mu
   }-\xi^2}\right) \sin \left(h
   \sqrt{\frac{\Lambda }{\lambda +2
   \mu }-\xi^2}\right)=0\,.
\end{equation}
Similarly, imposing boundary conditions $\mathcal{B}_{\mathrm{FD}}$ yields the secular equation
\begin{equation}
\label{secular equation 2d cylinder FD}
\Lambda ^2 
   \left(\frac{\Lambda}{\lambda+2\mu} -\xi^2\right) \sin\left(h
   \sqrt{\frac{\Lambda}{\mu
   }-\xi^2}\right) \sin \left(h
   \sqrt{\frac{\Lambda }{\lambda +2
   \mu }-\xi^2}\right)=0\,.
\end{equation}

A careful examination of \eqref{solutions 2D cylinder}--\eqref{secular equation 2d cylinder FD} yields the following.

\begin{thm}
\label{theorem DF spectrum 2d cylinder}
The eigenvalues of the Dirichlet-free (DF) eigenvalue problem for the operator of linear elasticity on the two-dimensional cylinder are:
\begin{enumerate}[(i)]
\item Eigenvalues
\begin{equation}
\label{DF 2d series 1}
\frac{k^2\pi^2}{h^2}(\lambda+2\mu), \qquad k=1,2,\dots,
\end{equation}
with multiplicity $1$.
\item Eigenvalues
\begin{equation}
\label{DF 2d series 2}
\frac{k^2\pi^2}{h^2}\mu, \qquad k=1,2,\dots,
\end{equation}
with multiplicity $1$.
\item Eigenvalues
\begin{equation}
\label{DF 2d series 3}
n^2\,\mu, \qquad n=1,2,\dots,
\end{equation}
with multiplicity $2$.
\item Eigenvalues
\begin{equation}
\label{DF 2d series 4}
\left(n^2+\frac{k^2\pi^2}{h^2}\right)\mu, \qquad n,k=1,2,\dots,
\end{equation}
with multiplicity $2$.
\item Eigenvalues
\begin{equation}
\label{DF 2d series 5}
\left(n^2+\frac{k^2\pi^2}{h^2}\right)(\lambda+2\mu), \qquad n,k=1,2,\dots,
\end{equation}
with multiplicity $2$.
\end{enumerate}
\end{thm}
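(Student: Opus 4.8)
The plan is to exploit the isometry group of the cylinder. Since both $\LL$ and the $\mathrm{DF}$ boundary operator commute with translations along the torus factor $\mathbb{T}$, I would decompose every eigenfunction into Fourier modes $e^{\ir\xi x^1}\,\mathbf{w}(x^2)$, $\xi\in\mathbb{Z}$, thereby reducing \eqref{L u=Lambda u} for each fixed $\xi$ to a fourth-order ODE boundary value problem on $[0,h]$ with a four-dimensional solution space. The Helmholtz ansatz \eqref{solutions 2D cylinder}, with the two scalar potentials solving the Helmholtz equations \eqref{helmholz 2d disk}, parametrises this space (generically by the four constants $c_{j,\xi,\pm}$, with degenerate forms reserved for the thresholds); I would first record that the parametrisation is \emph{complete} mode-by-mode, so that imposing the $\mathrm{DF}$ conditions at $x^2=0$ and $x^2=h$ and demanding a nontrivial kernel of the resulting $4\times4$ system is exactly the secular equation \eqref{secular equation 2d cylinder DF}. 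No eigenfunctions are then missed.

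Away from the two thresholds, i.e.\ for $\Lambda$ with $\omega_{1,\Lambda}\neq\xi^2$ and $\omega_{2,\Lambda}\neq\xi^2$, the eigenvalues can be read off directly from the factorisation in \eqref{secular equation 2d cylinder DF}. Solving $h\sqrt{\omega_{2,\Lambda}-\xi^2}=k\pi$ gives $\Lambda=\mu(\xi^2+k^2\pi^2/h^2)$, and solving $h\sqrt{\omega_{1,\Lambda}-\xi^2}=k\pi$ gives $\Lambda=(\lambda+2\mu)(\xi^2+k^2\pi^2/h^2)$, $k\geq1$. I would then organise these values by the torus mode: the single mode $\xi=0$ produces the simple eigenvalues \eqref{DF 2d series 1} and \eqref{DF 2d series 2}, whereas for $\xi=n\geq1$ the two conjugate modes $\pm n$ give the same value and hence the multiplicity-$2$ families \eqref{DF 2d series 4} and \eqref{DF 2d series 5}. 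The multiplicity count rests on the observation that at a simple zero of one sine factor, the other factors being nonzero, the $4\times4$ boundary matrix has rank three, hence a one-dimensional kernel for each of $\xi=\pm n$.

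The delicate point, and the step I expect to be the main obstacle, is the behaviour at the thresholds $\Lambda=\mu\xi^2$ and $\Lambda=(\lambda+2\mu)\xi^2$. There a square-root argument vanishes, the two exponentials in \eqref{general solution psi 2d cylinder} coalesce, the parametrisation degenerates, and the relevant $\sin$ collapses to $\sin0=0$; at the transverse threshold even the polynomial factor and a sine factor vanish simultaneously, so the naive vanishing order of \eqref{secular equation 2d cylinder DF} overcounts the true multiplicity. At these values the factored determinant cannot be trusted, and I would re-solve the reduced problem with the correct degenerate basis (constant plus linear in $x^2$ for the resonant polarisation, evanescent or oscillatory for the other). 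Carrying this out, I would check that at $\Lambda=\mu\xi^2$ with $\xi=n\neq0$ the field $\bu=\curl(\psi_2\,\hat{\mathbf{z}})$ with $\psi_2=a\,e^{\ir\xi x^1}$ --- equivalently $u_1\equiv0$, $u_2=-\ir\xi a\,e^{\ir\xi x^1}$ --- satisfies $u_1|_{x^2=0,h}=0$, $(\TT\bu)_2|_{x^2=0,h}=0$ and $\LL\bu=\mu\xi^2\bu$, so that $\mu n^2$ is a genuine eigenvalue of multiplicity $2$, namely \eqref{DF 2d series 3}. By contrast, a direct analysis of the degenerate $4\times4$ system at the longitudinal threshold $\Lambda=(\lambda+2\mu)\xi^2$ should show that the tangential Dirichlet condition is incompatible with any nontrivial combination, so these values do not occur; this is exactly the asymmetry between the presence of \eqref{DF 2d series 3} and the exclusion of the $k=0$ case of \eqref{DF 2d series 5}. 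Handling this degenerate system, and verifying that it contributes nothing to \eqref{DF 2d series 3} beyond the constant transverse mode, is where most of the labour will lie.

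Finally I would dispose of the remaining special cases. The prefactor $\Lambda^2$ corresponds to $\Lambda=0$, the $\xi=0$ threshold, where the constant normal displacement $\bu=(0,c)$ satisfies the $\mathrm{DF}$ conditions but lies outside the positive families \eqref{DF 2d series 1}--\eqref{DF 2d series 5} recorded in the theorem and is dealt with separately. I would then confirm exhaustiveness by noting that, for each $\xi$, the only zeros of \eqref{secular equation 2d cylinder DF} are those already enumerated and that there are no square-integrable solutions below the first threshold. Collecting the contributions over all $\xi\in\mathbb{Z}$ --- with any coincidences among the five families (possible only for non-generic $\lambda,\mu,h$) merely adding multiplicities --- then yields precisely \eqref{DF 2d series 1}--\eqref{DF 2d series 5} with the stated multiplicities.
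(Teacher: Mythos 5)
Your proposal follows essentially the same route as the paper: separation of variables via the two Helmholtz potentials \eqref{solutions 2D cylinder}--\eqref{general solution psi 2d cylinder}, reduction to the secular equation \eqref{secular equation 2d cylinder DF}, and a mode-by-mode count over $\xi\in\mathbb{Z}$ --- the paper compresses all of this into the phrase ``a careful examination yields''. Your explicit treatment of the threshold degeneracies (where the vanishing order of \eqref{secular equation 2d cylinder DF} overcounts, producing the family \eqref{DF 2d series 3} at $\Lambda=\mu n^2$ while excluding $(\lambda+2\mu)n^2$) is precisely the examination the paper leaves implicit, and your conclusions there are correct.
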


\begin{thm}
\label{theorem FD spectrum 2d cylinder}
The eigenvalues of the free-Dirichlet (FD) eigenvalue problem for the operator of linear elasticity on the two-dimensional cylinder are:
\begin{enumerate}[(i)]
\item Eigenvalues
\begin{equation}
\label{FD 2d series 1}
\frac{k^2\pi^2}{h^2}(\lambda+2\mu), \qquad k=1,2,\dots,
\end{equation}
with multiplicity $1$.
\item Eigenvalues
\begin{equation}
\label{FD 2d series 2}
\frac{k^2\pi^2}{h^2}\mu, \qquad k=1,2,\dots,
\end{equation}
with multiplicity $1$.
\item Eigenvalues
\begin{equation}
\label{FD 2d series 3}
\left(n^2+\frac{k^2\pi^2}{h^2}\right)\mu, \qquad n,k=1,2,\dots,
\end{equation}
with multiplicity $2$.
\item Eigenvalues
\begin{equation}
\label{FD 2d series 4}
n^2(\lambda+2\mu), \qquad n=1,2,\dots,
\end{equation}
with multiplicity $2$.
\item Eigenvalues
\begin{equation}
\label{FD 2d series 5}
\left(n^2+\frac{k^2\pi^2}{h^2}\right)(\lambda+2\mu), \qquad n,k=1,2,\dots,
\end{equation}
with multiplicity $2$.
\end{enumerate}
\end{thm}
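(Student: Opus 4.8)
The plan is to build on the separation of variables already carried out in \eqref{solutions 2D cylinder}--\eqref{secular equation 2d cylinder FD}. Having reduced the eigenvalue equation to the scalar Helmholtz problems \eqref{helmholz 2d disk}--\eqref{omegas helmholtz} for the Morse--Feshbach potentials $\psi_1,\psi_2$, I would expand in the periodic variable $x^1$ and analyse each Fourier mode $\xi\in\mathbb Z$ in isolation. Writing $\psi_j=\er^{\ir\xi x^1}P_j(x^2)$ turns the problem into a two-point boundary value problem on $[0,h]$ whose general solution is governed by the four amplitudes $c_{j,\xi,\pm}$ of \eqref{general solution psi 2d cylinder}. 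Imposing the four scalar FD conditions $\mathcal B_{\mathrm{FD}}$ (namely vanishing normal displacement and vanishing tangential traction, cf.~\eqref{FD bc}) at $x^2=0$ and $x^2=h$ yields a homogeneous $4\times4$ linear system in these amplitudes; a nontrivial solution exists exactly when its determinant vanishes, and this determinant is, up to the nonvanishing scalar factor $\Lambda^2$, the left-hand side of \eqref{secular equation 2d cylinder FD}. The eigenvalues are therefore the positive zeros of its three remaining factors, and the substance of the proof is to read these off together with the dimension of the corresponding eigenspace.

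The three factors are matched to the five series as follows. The factor $\sin\!\big(h\sqrt{\Lambda/\mu-\xi^2}\big)$ vanishes precisely when $\Lambda=\mu\big(\xi^2+k^2\pi^2/h^2\big)$, which gives series (ii) for $\xi=0$ and series (iii) for $\xi=n\ge1$; the factor $\sin\!\big(h\sqrt{\Lambda/(\lambda+2\mu)-\xi^2}\big)$ vanishes when $\Lambda=(\lambda+2\mu)\big(\xi^2+k^2\pi^2/h^2\big)$, producing series (i) for $\xi=0$ and series (v) for $\xi=n\ge1$; and the factor $\Lambda/(\lambda+2\mu)-\xi^2$ vanishes at $\Lambda=n^2(\lambda+2\mu)$ with $\xi=n\ge1$, giving series (iv). Since $\xi$ enters the secular equation only through $\xi^2$ while the modes $\er^{\pm\ir n x^1}$ are linearly independent, every $\xi=n\ge1$ contributes a two-dimensional eigenspace whereas the $\xi=0$ modes are simple; this reproduces the multiplicities stated in Theorem~\ref{theorem FD spectrum 2d cylinder}. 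I would also verify that no eigenvalues arise below the first threshold $\Lambda=\mu\xi^2$: there both radicals are imaginary, the sines become non-vanishing hyperbolic sines, and a direct inspection of the $4\times4$ system shows its kernel is trivial.

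The delicate part, and the main obstacle, is the treatment of the degenerate and coincident configurations, where the product structure of \eqref{secular equation 2d cylinder FD} no longer literally computes the determinant. Three situations must be examined by recomputing the kernel of the $4\times4$ boundary matrix directly. First, at the thresholds $\Lambda=\mu\xi^2$ and $\Lambda=(\lambda+2\mu)\xi^2$ one of the radicals vanishes, the exponential basis of \eqref{general solution psi 2d cylinder} degenerates into an affine profile, and the naive zero at $k=0$ must be tested: the computation should show that $\Lambda=\mu\xi^2$ is \emph{not} an eigenvalue (this is why series (ii)--(iii) start at $k=1$), while $\Lambda=(\lambda+2\mu)\xi^2$ yields exactly a one-dimensional contribution per mode. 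Second, at $\Lambda=(\lambda+2\mu)\xi^2$ the degenerate factor and a sine factor vanish simultaneously, so one must confirm that the eigenspace for a single $\xi=n$ remains one-dimensional, giving multiplicity $2$ after summing over $\xi=\pm n$ rather than an inflated value; the same care applies at any accidental coincidence forced by special ratios of the Lam\'e parameters to $\pi^2/h^2$. Third, the Morse--Feshbach map $(\psi_1,\psi_2)\mapsto\bu$ of \eqref{solutions 2D cylinder} is not injective --- constant and harmonic potentials, together with the zero-frequency rigid translation $\bu=(c,0)$ corresponding to the spurious root $\Lambda=0$ of the prefactor $\Lambda^2$, lie in its kernel --- so I would quotient out this gauge freedom and check that distinct admissible potential modes map to linearly independent displacement fields, ensuring that the count at the level of potentials transfers faithfully to the displacement problem. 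The finitely many zero modes do not appear among the listed positive eigenvalues and are immaterial for the Weyl asymptotics. Once these points are settled, assembling the contributions of all $\xi\in\mathbb Z$ produces exactly the families (i)--(v) with the asserted multiplicities.
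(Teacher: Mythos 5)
Your proposal is correct and follows essentially the same route as the paper, which derives the secular equation \eqref{secular equation 2d cylinder FD} via the Morse--Feshbach potentials and then reads off the five series by ``a careful examination'' of its factors. Your additional attention to the threshold/coincident roots ($k=0$, the degenerate factor $\Lambda/(\lambda+2\mu)-\xi^2$) and to the non-injectivity of the potential representation is exactly the content of that unstated careful examination, and your factor-by-factor matching and multiplicity count ($\xi=0$ simple, $\xi=\pm n$ doubled) agree with the theorem.
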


\begin{figure}[tb] 
\begin{center}
\includegraphics[width=\textwidth]{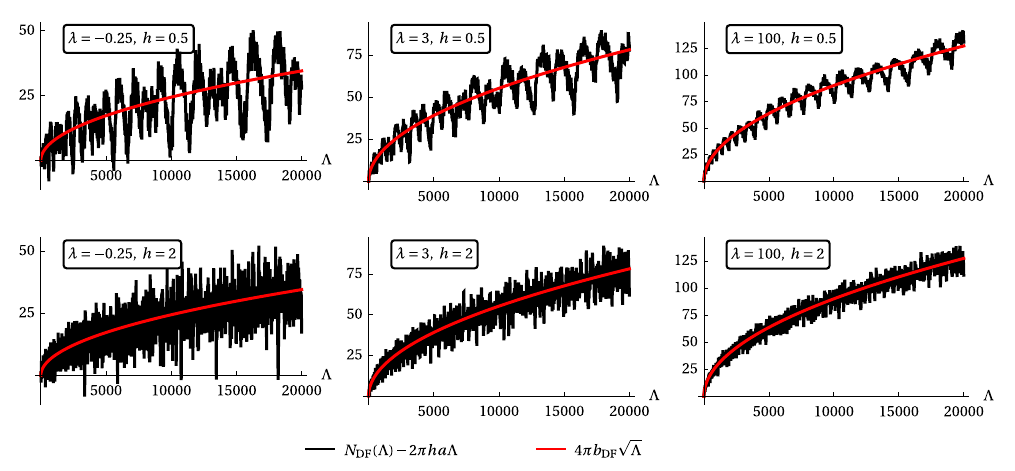}
\caption{The DF eigenvalue problem for 2D flat cylinders. In all images $\mu=1$.}\label{fig:cyl2DN}
\end{center}
\end{figure}

\begin{figure}[tb] 
\begin{center}
\includegraphics[width=\textwidth]{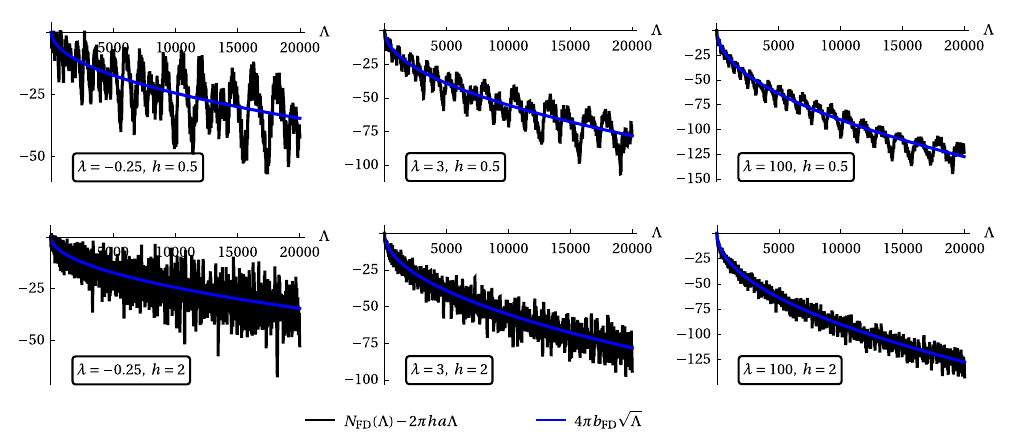}
\caption{The FD eigenvalue problem for 2D flat cylinders. In all images $\mu=1$.}\label{fig:cyl2ND}
\end{center}
\end{figure}

Observe that the DF (Theorem~\ref{theorem DF spectrum 2d cylinder}) and the FD (Theorem~\ref{theorem FD spectrum 2d cylinder}) spectra coincide, except for fact that the series of eigenvalues~\eqref{DF 2d series 3} in the DF spectrum is replaced by the series of eigenvalues~\eqref{FD 2d series 4} in the FD spectrum.

Theorems~\ref{theorem DF spectrum 3d cylinder} and~\ref{theorem FD spectrum 3d cylinder} allow us to write down the eigenvalue counting functions $N_{\mathrm{DF}}$ and $N_{\mathrm{FD}}$ explicitly. They read
\begin{multline}
\label{N DF 2d cylinder}
N_{\mathrm{DF}}(\Lambda)
=
\left\lfloor \frac{h}{\pi}\sqrt{\frac{\Lambda}{\lambda+2\mu}} \right\rfloor
+
\left\lfloor \frac{h}{\pi}\sqrt{\frac{\Lambda}{\mu}} \right\rfloor
+
2\left\lfloor \sqrt{\frac{\Lambda}{\mu}} \right\rfloor
\\
+
2\sum_{n=1}^{\left\lfloor \sqrt{\frac{\Lambda}{\mu}}\right\rfloor} 
 \left\lfloor\frac{h}{\pi}\sqrt{\frac{\Lambda}{\mu}- n^2} \right\rfloor
+
2\sum_{n=1}^{\left\lfloor \sqrt{\frac{\Lambda}{\lambda+2\mu}} \right\rfloor} \left\lfloor\frac{h}{\pi}\sqrt{\frac{\Lambda}{\lambda+2\mu}- n^2} \right\rfloor,
\end{multline}

\begin{equation}
\label{N FD 2d cylinder}
N_{\mathrm{FD}}(\Lambda)
=
N_{\mathrm{DF}}(\Lambda)+2\left( \left\lfloor \sqrt{\frac{\Lambda}{\lambda+2\mu}} \right\rfloor-\left\lfloor \sqrt{\frac{\Lambda}{\mu}} \right\rfloor\right)\,.
\end{equation}
Here $\lfloor\,\cdot\, \rfloor$ denotes the integer part (floor function).

\

Let us verify formula~\eqref{main theorem equation 1} by computing the asymptotic expansions of~\eqref{N DF 2d cylinder} and~\eqref{N FD 2d cylinder} as $\Lambda \to +\infty$.

\begin{prop}
\label{prop asymptotic expansions 2d cylinder}
The functions \eqref{N DF 2d cylinder} and \eqref{N FD 2d cylinder} admit the following two-term asymptotic expansion:
\begin{equation}
\label{prop asymptotic expansions 2d cylinder equation 1}
N_{\aleph}(\Lambda)=\frac{h}2 \left(\frac{1}{\mu}+\frac{1}{\lambda+2\mu}\right) \Lambda
\pm
\left(\frac{1}{\mu^{1/2}}-\frac{1}{(\lambda+2\mu)^{1/2}}\right) \Lambda^{1/2}+o(\Lambda^{1/2}) \quad \text{as}\quad \Lambda\to +\infty
\end{equation}
with sign
\begin{equation}
\label{prop asymptotic expansions 2d cylinder equation 2}
\begin{cases}
+ & \text{for}\ \,\aleph=\mathrm{DF},\\
- & \text{for}\ \,\aleph=\mathrm{FD}.
\end{cases}
\end{equation}
\end{prop}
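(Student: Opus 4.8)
The plan is to read off the two-term asymptotics directly from the closed forms \eqref{N DF 2d cylinder} and \eqref{N FD 2d cylinder}, the guiding observation being that each double sum appearing there is a lattice-point count inside a quarter-ellipse. Writing $A:=\Lambda/\mu$ and $B:=\Lambda/(\lambda+2\mu)$, I first record that
\[
T(A):=\sum_{n=1}^{\lfloor\sqrt A\rfloor}\left\lfloor\frac h\pi\sqrt{A-n^2}\right\rfloor=\#\Bigl\{(n,k)\in\mathbb{Z}^2:\ n\ge1,\ k\ge1,\ n^2+\tfrac{k^2\pi^2}{h^2}\le A\Bigr\},
\]
since for each admissible $n$ the inner floor counts exactly the integers $k\ge1$ with $n^2+k^2\pi^2/h^2\le A$. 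Thus the two double sums in \eqref{N DF 2d cylinder} equal $2T(A)+2T(B)$, and the entire problem is reduced to a two-term expansion for $T$.

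The second step is to linearise $T(A)$ by symmetry. The full ellipse $\{n^2+k^2\pi^2/h^2\le A\}$ is invariant under $n\mapsto-n$ and $k\mapsto-k$; splitting its lattice points into the four open quadrant interiors, the four half-axes, and the origin yields
\[
F(A)=4\,T(A)+2\lfloor\sqrt A\rfloor+2\left\lfloor\tfrac h\pi\sqrt A\right\rfloor+1,\qquad F(A):=\#\Bigl\{(n,k)\in\mathbb{Z}^2:\ n^2+\tfrac{k^2\pi^2}{h^2}\le A\Bigr\}.
\]
Hence $T(A)=\tfrac14 F(A)-\tfrac12\lfloor\sqrt A\rfloor-\tfrac12\lfloor\tfrac h\pi\sqrt A\rfloor-\tfrac14$, which reduces matters to an asymptotic expansion for the \emph{full} ellipse count $F(A)$. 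Here $F(A)$ is the number of integer points in the dilate $\sqrt A\,K$ of the fixed ellipse $K=\{x^2+y^2\pi^2/h^2\le1\}$, whose area is $h$.

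The crucial third step is the lattice-point estimate $F(A)=hA+o(A^{1/2})$. Since $\partial K$ is smooth with everywhere nonzero curvature, van der Corput's classical theorem gives $F(A)=hA+O(A^{1/3})$, and $A^{1/3}=o(A^{1/2})$. I expect this to be the \textbf{main obstacle}: a naive Euler--Maclaurin or perimeter bound for the quarter-ellipse delivers only an $O(A^{1/2})$ remainder, which would swamp the subleading term we wish to isolate; the genuine cancellation, arising from the oscillation of the fractional parts along the curved arc, is the analytic heart of the argument. Feeding this back and using $\lfloor x\rfloor=x+O(1)$ on the axis terms gives
\[
2T(A)=\frac{hA}{2}-\Bigl(1+\frac h\pi\Bigr)\sqrt A+o\bigl(A^{1/2}\bigr),
\]
and the analogous expansion with $A$ replaced by $B$.

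Finally I assemble \eqref{N DF 2d cylinder}, tracking the cancellations. The $\Lambda$-terms combine to $\tfrac h2(A+B)=\tfrac h2\bigl(\tfrac1\mu+\tfrac1{\lambda+2\mu}\bigr)\Lambda$, matching the Weyl term. For the $\Lambda^{1/2}$-terms, the coefficient of $\sqrt A$ is $\tfrac h\pi+2-(1+\tfrac h\pi)=1$ (contributions from $\lfloor\tfrac h\pi\sqrt A\rfloor$, $2\lfloor\sqrt A\rfloor$, and $2T(A)$), while the coefficient of $\sqrt B$ is $\tfrac h\pi-(1+\tfrac h\pi)=-1$; since $\sqrt A=\mu^{-1/2}\Lambda^{1/2}$ and $\sqrt B=(\lambda+2\mu)^{-1/2}\Lambda^{1/2}$, the subleading term of $N_{\mathrm{DF}}$ is $\bigl(\mu^{-1/2}-(\lambda+2\mu)^{-1/2}\bigr)\Lambda^{1/2}$, i.e.\ \eqref{prop asymptotic expansions 2d cylinder equation 1} with the $+$ sign. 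For $N_{\mathrm{FD}}$ I use \eqref{N FD 2d cylinder}: the extra term $2(\lfloor\sqrt B\rfloor-\lfloor\sqrt A\rfloor)=2(\sqrt B-\sqrt A)+O(1)$ leaves the leading term untouched but reverses the sign of the $\Lambda^{1/2}$-coefficient, producing the $-$ sign. All accumulated remainders are $o(\Lambda^{1/2})$, completing the proof.
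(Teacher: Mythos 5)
Your proposal is correct and follows essentially the same route as the paper: the paper's proof consists precisely of reducing everything to the estimate $\sum_{n=1}^{\lfloor\sqrt x\rfloor}\lfloor a\sqrt{x-n^2}\rfloor=\frac{\pi a}{4}x-\frac12(a+1)x^{1/2}+o(x^{1/2})$, which is your $T$ with $a=h/\pi$, and your bookkeeping of the $\sqrt A$ and $\sqrt B$ coefficients matches the paper's conclusion exactly. The only difference is that the paper states this lattice-point estimate without proof, whereas you justify it by symmetrising to the full ellipse and invoking van der Corput's $O(A^{1/3})$ bound --- correctly identifying that a naive $O(A^{1/2})$ perimeter bound would not suffice.
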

\begin{proof}
Formula~\eqref{prop asymptotic expansions 2d cylinder equation 1} follows from \eqref{N FD 2d cylinder}, \eqref{N DF 2d cylinder}, and the estimate
\begin{equation*}
\label{tricky estimate 2d}
\sum_{n=1}^{\lfloor \sqrt{x}\rfloor} \left\lfloor a \sqrt{x-n^2} \right\rfloor=\frac{\pi a}{4}x-\frac12\left(a+1 \right) x^{1/2}+o(x^{1/2})\quad \text{as}\quad x\to +\infty\,, \quad a>0\,.
\end{equation*}
\end{proof}

On account of~\eqref{volume surface area 2d cylnder}, Proposition~\ref{prop asymptotic expansions 2d cylinder} agrees with Theorem~\ref{main theorem} as well as formula~\eqref{FD less DF}.

\


Figures~\ref{fig:cyl2DN} and~\ref{fig:cyl2ND} show a comparison between the actual counting functions~\eqref{N DF 2d cylinder}, \eqref{N FD 2d cylinder} and the two-term asymptotic expansions~\eqref{prop asymptotic expansions 2d cylinder equation 1}, \eqref{prop asymptotic expansions 2d cylinder equation 2}.

\subsection{Three-dimensional examples}
\label{Three-dimensional examples}

\subsubsection{Flat cylinders}
\label{Flat cylinders 3D}

Consider the three-dimensional cylinder $M:=\mathbb{T}^2\times [0,h]$, where $\mathbb{T}^2$ is the flat two-dimensional torus with side $2\pi$ and $h>0$ is the height of the cylinder, equipped with coordinates $(x^1,x^2,x^3)\in [0,2\pi)^2\times[0,h]$. Of course,
\begin{equation}
\label{volume surface area 3d cylnder}
\operatorname{Vol}_3(M)=4\pi^2 h, \qquad \operatorname{Vol}_2(\partial M)=8\pi^2.
\end{equation}
We separate variables by seeking a solution in the form
\begin{equation}
\label{solutions 3D cylinder}
\bu(x^1,x^2,x^3)=\grad \psi_1(x^1,x^2,x^3)+ \curl\left(\psi_2(x^1,x^2,x^3)\,\hat{\mathbf{z}}\right)+\curl\curl\left(\psi_3(x^1,x^2,x^3)\,\hat{\mathbf{z}}\right)\,,
\end{equation}
where $\hat{\mathbf{z}}$ is the unit vector in the (positive) direction $x^3$ and $\psi_j$, $j=1,2,3$, are scalar potentials. Once again, the scalar potentials satisfy Helmholtz equation \eqref{helmholz 2d disk}, with $\omega_{1,\Lambda}$ and $\omega_{2, \Lambda}$ defined in accordance with \eqref{omegas helmholtz}, and $\omega_{3, \Lambda}:=\omega_{2, \Lambda}$. The general solution for $\psi_j$, $j=1,2,3$, reads
\begin{equation}
\label{general solution psi 3d cylinder}
\psi_j(x^1,x^2,x^3)=\sum_{(\xi_1,\xi_2)\in \mathbb{Z}^2} \left(
c_{j,\xi_1,\xi_2,+}e^{i\left(x^1 \xi_1+x^2 \xi_2+\sqrt{\omega_{j, \Lambda}-n} x^3 \right)} 
+
c_{j,\xi_1,\xi_2,-}e^{i\left(x^1 \xi_1+x^2 \xi_2+\sqrt{\omega_{j, \Lambda}-n} x^2 \right)}
\right)\,,
\end{equation}
where $n:=\xi_1^2+\xi_2^2$.
Substituting \eqref{general solution psi 3d cylinder} into \eqref{solutions 3D cylinder} and, in turn, imposing boundary conditions $\mathcal{B}_{\mathrm{DF}}$ at $x^3=0$ and $x^3=h$ yields the secular equation
\begin{equation}
\label{secular equation 3d cylinder DF}
\Lambda ^2 n^2
   \left(\frac\Lambda\mu -  n\right) \sin ^2\left(h
   \sqrt{\frac{\Lambda}{\mu
   }-n}\right) \sin \left(h
   \sqrt{\frac{\Lambda }{\lambda +2
   \mu }-n}\right)=0\,.
\end{equation}
Similarly, imposing boundary conditions $\mathcal{B}_{\mathrm{FD}}$ yields the secular equation
\begin{equation}
\label{secular equation 3d cylinder FD}
\Lambda ^2 n^2
   \left(\frac\Lambda\mu -  n\right) \left(\frac{\Lambda}{\lambda+2\mu} -n\right) \sin ^2\left(h
   \sqrt{\frac{\Lambda}{\mu
   }-n}\right) \sin \left(h
   \sqrt{\frac{\Lambda }{\lambda +2
   \mu }-n}\right)=0\,.
\end{equation}

The $\mathrm{DF}$ (resp.~$\mathrm{FD}$) spectrum is a subset of the zeroes of \eqref{secular equation 3d cylinder DF} (resp.~\eqref{secular equation 3d cylinder FD}). A direct examination of solutions of~\eqref{secular equation 3d cylinder DF} and~\eqref{secular equation 3d cylinder FD} yields the following.

Let $r_2:\mathbb{N} \to \mathbb{N}$ be the sum of squares function:
\begin{equation*}
r_2(n):=\#\left\{(a,b)\in \mathbb{Z}^2\ | \ n=a^2+b^2\right\}\,.
\end{equation*}

\begin{thm}
\label{theorem DF spectrum 3d cylinder}
The eigenvalues of the Dirichlet-free eigenvalue problem for the operator of linear elasticity on the three-dimensional flat cylinder are:
\begin{enumerate}[(i)]
\item Eigenvalues
\begin{equation}
\label{DF series 1}
\frac{k^2\pi^2}{h^2}(\lambda+2\mu), \qquad k=1,2,\dots,
\end{equation}
with multiplicity $1$.
\item Eigenvalues
\begin{equation}
\label{DF series 2}
n\,\mu, \qquad n=1,2,\dots,
\end{equation}
with multiplicity $r_2(n)$.
\item Eigenvalues
\begin{equation}
\label{DF series 3}
\left(n+\frac{k^2\pi^2}{h^2}\right)\mu, \qquad n,k=1,2,\dots,
\end{equation}
with multiplicity $2r_2(n)$.
\item Eigenvalues
\begin{equation}
\label{DF series 4}
\left(n+\frac{k^2\pi^2}{h^2}\right)(\lambda+2\mu), \qquad n,k=1,2, \dots, 
\end{equation}
with multiplicity $r_2(n)$.
\end{enumerate}
\end{thm}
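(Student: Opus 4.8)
The plan is to leverage the explicit separation of variables already set up in this subsection. Fourier-decomposing in the torus variables $(x^1,x^2)$ reduces the eigenvalue problem to a family of one-dimensional boundary value problems on the interval $[0,h]$ in the $x^3$ variable, one for each lattice covector $(\xi_1,\xi_2)\in\mathbb{Z}^2$; I would organise the entire argument around the integer $n=\xi_1^2+\xi_2^2$. For fixed $(\xi_1,\xi_2)$, substituting the general potentials \eqref{general solution psi 3d cylinder} into the ansatz \eqref{solutions 3D cylinder} and imposing the mixed conditions \eqref{DF bc} at $x^3=0$ and $x^3=h$ produces precisely the secular equation \eqref{secular equation 3d cylinder DF}. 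The bulk of the proof is then a careful reading of the zeros of that equation together with an honest bookkeeping of multiplicities.

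First I would dispose of the generic factors. Away from the thresholds, the equation $\sin\bigl(h\sqrt{\Lambda/(\lambda+2\mu)-n}\bigr)=0$ forces $\Lambda/(\lambda+2\mu)-n=k^2\pi^2/h^2$ and yields the pressure series \eqref{DF series 4}, carried by the single potential $\psi_1$; the factor $\sin^2\bigl(h\sqrt{\Lambda/\mu-n}\bigr)=0$ yields the shear series \eqref{DF series 3}, the square encoding the two independent transverse polarisations carried by $\psi_2$ and $\psi_3$. The prefactor $\Lambda^2 n^2$ is spurious, arising from the potential parametrisation rather than from genuine eigenfunctions, and I would discard it after confirming that $\Lambda=0$ is excluded. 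The remaining factor $\Lambda/\mu-n=0$ corresponds to the threshold value $\Lambda=n\mu$ of series \eqref{DF series 2}. The lattice count is then transparent: each value of $n$ is realised by exactly $r_2(n)$ covectors $(\xi_1,\xi_2)$, so that the two-fold shear degeneracy gives multiplicity $2r_2(n)$ in \eqref{DF series 3}, while the single pressure mode and the (to be confirmed) single surviving transverse mode at threshold give $r_2(n)$ in \eqref{DF series 4} and \eqref{DF series 2} respectively. For generic Lamé parameters the three families are disjoint; coincidences force the multiplicities to add, which I would note but set aside.

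The delicate points are the two degenerate regimes where the structure of the one-dimensional problem changes, and these I expect to be the main obstacle. The first is the threshold $\Lambda=n\mu$ (series \eqref{DF series 2}): there $\sqrt{\Lambda/\mu-n}=0$, the two oscillatory transverse solutions collapse, and the governing ODE acquires a double root at the origin, producing solutions linear in $x^3$ alongside a hyperbolic pressure contribution, since $\Lambda/(\lambda+2\mu)-n<0$ under \eqref{conditions on lambda and mu}. A direct analysis of the coupled boundary conditions at this double root is needed to show that precisely one transverse mode per covector survives, giving multiplicity $r_2(n)$ rather than $2r_2(n)$. The second is the zero mode $n=0$, which must be bypassed entirely because the $n^2$ factor makes \eqref{secular equation 3d cylinder DF} vanish identically: here the transverse potentials $\curl(\psi_2\hat{\mathbf{z}})$ and $\curl\curl(\psi_3\hat{\mathbf{z}})$ contribute nothing and only $\grad\psi_1$ survives, so solving the resulting scalar problem with a free normal traction condition yields the pressure series \eqref{DF series 1} with multiplicity $1$.

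Finally, I would close the argument by establishing that the potential representation \eqref{solutions 3D cylinder} exhausts the eigenfunctions on each Fourier mode, so that none is missed and none double-counted; this completeness-and-injectivity check is the true crux, and it demands particular care precisely at $n=0$, where the transverse potentials degenerate. The absence of eigenvalues below or embedded beneath the relevant thresholds can be ruled out by the same direct substitutions used in Lemmas~\ref{lemma no eigenvalues 2d} and~\ref{lemma no eigenvalues perp}, and the two-dimensional Theorem~\ref{theorem DF spectrum 2d cylinder} serves both as a consistency check for the zero-mode analysis and as a template for the threshold bookkeeping.
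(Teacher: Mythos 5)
Your overall route is the same as the paper's: the paper reduces the problem to the secular equation \eqref{secular equation 3d cylinder DF} via the potential ansatz \eqref{solutions 3D cylinder}, \eqref{general solution psi 3d cylinder} and then simply asserts that ``a direct examination of solutions'' yields the theorem, so your itemised reading of the factors, the lattice count via $r_2(n)$, and the threshold analysis at $\Lambda=n\mu$ are a faithful (and more explicit) elaboration of what the paper leaves implicit.

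There is, however, a genuine gap at precisely the point you single out as the crux, namely the zero mode $n=0$. You claim that there ``only $\grad\psi_1$ survives'' and that this mode therefore contributes only the pressure series \eqref{DF series 1}. But the ansatz \eqref{solutions 3D cylinder} is not surjective on the $n=0$ Fourier block: for $\psi_2,\psi_3$ depending on $x^3$ alone one has $\curl(\psi_2\hat{\mathbf z})=\nabla\psi_2\times\hat{\mathbf z}=0$ and $\curl\curl(\psi_3\hat{\mathbf z})=\grad(\psi_3')-(\Delta\psi_3)\hat{\mathbf z}=0$, so the representation produces only fields parallel to $\hat{\mathbf z}$ and misses the two tangential polarisations. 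These are not spurious: $\bu=\sin(k\pi x^3/h)\,\mathbf e_j$, $j=1,2$, is divergence-free, satisfies $\LL\bu=\mu k^2\pi^2 h^{-2}\bu$, vanishes tangentially on $\partial M$, and has identically vanishing normal traction, hence is a genuine DF eigenfunction. So the ``completeness-and-injectivity check'' cannot be closed in the way you describe; carried out honestly it yields the additional eigenvalues $\mu k^2\pi^2/h^2$ with multiplicity $2$ --- the $n=0$ instance of series \eqref{DF series 3}, which the statement restricts to $n\ge1$. (Compare the two-dimensional Theorem~\ref{theorem DF spectrum 2d cylinder}, where the analogous modes do appear as series \eqref{DF 2d series 2}; the 2D template you invoke is misleading here precisely because in two dimensions the curl potential still captures the $\xi=0$ tangential mode, whereas in three dimensions both transverse potentials degenerate.) These extra modes contribute only $O(\Lambda^{1/2})=o(\Lambda)$ to the counting function, so the asymptotic verification in Proposition~\ref{prop asymptotic expansions 3d cylinder} is unaffected, but as a proof of the theorem read as an exhaustive description of the spectrum the argument does not close at $n=0$.
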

\begin{thm}
\label{theorem FD spectrum 3d cylinder}
The eigenvalues of the free-Dirichlet eigenvalue problem for the operator of linear elasticity on the three-dimensional flat cylinder are:
\begin{enumerate}[(i)]
\item Eigenvalues
\begin{equation}
\label{FD series 1}
\frac{k^2\pi^2}{h^2}(\lambda+2\mu), \qquad k=1,2,\dots,
\end{equation}
with multiplicity $1$.
\item Eigenvalues
\begin{equation}
\label{FD series 2}
n\,\mu, \qquad n=1,2,\dots,
\end{equation}
with multiplicity $r_2(n)$\footnote{Here and further on by multiplicity zero we mean that the corresponding number is not an eigenvalue.}.
\item Eigenvalues
\begin{equation}
\label{FD series 3}
\left(n+\frac{k^2\pi^2}{h^2}\right)\mu, \qquad n,k=1,2,\dots,
\end{equation}
with multiplicity $2r_2(n)$.
\item Eigenvalues
\begin{equation}
\label{FD series 4}
n\,(\lambda+2\mu), \qquad n=1,2,\dots,
\end{equation}
with multiplicity $r_2(n)$.
\item Eigenvalues
\begin{equation}
\label{FD series 5}
\left(n+\frac{k^2\pi^2}{h^2}\right)(\lambda+2\mu), \qquad n,k=1,2, \dots, 
\end{equation}
with multiplicity $r_2(n)$.
\end{enumerate}
\end{thm}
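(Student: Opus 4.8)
The plan is to extract the full spectrum from the factored secular equation \eqref{secular equation 3d cylinder FD} by translating each factor into a family of genuine eigenfunctions and, crucially, determining the correct multiplicities. First I would fix a Fourier mode $(\xi_1,\xi_2)\in\mathbb{Z}^2$ and set $n=\xi_1^2+\xi_2^2$, so that the ansatz \eqref{solutions 3D cylinder} together with \eqref{general solution psi 3d cylinder} reduces the eigenvalue problem to a system of ordinary differential equations in $x^3\in[0,h]$ for the amplitudes associated with $\psi_1,\psi_2,\psi_3$. Since $\omega_{1,\Lambda}=\Lambda/(\lambda+2\mu)$ governs the longitudinal (pressure) potential $\psi_1$ while $\omega_{2,\Lambda}=\omega_{3,\Lambda}=\Lambda/\mu$ govern the two transverse (shear) potentials $\psi_2,\psi_3$, the $x^3$-wavenumbers are $\sqrt{\Lambda/(\lambda+2\mu)-n}$ and $\sqrt{\Lambda/\mu-n}$ respectively; imposing the FD boundary conditions \eqref{FD bc} at $x^3=0$ and $x^3=h$ produces the linear system in the coefficients $c_{j,\xi_1,\xi_2,\pm}$ whose solvability condition is precisely \eqref{secular equation 3d cylinder FD}.

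The heart of the argument is the factor-by-factor analysis of \eqref{secular equation 3d cylinder FD} for $n\ge 1$. The factor $\sin\!\big(h\sqrt{\Lambda/\mu-n}\big)$ vanishes exactly when $\Lambda=(n+k^2\pi^2/h^2)\mu$; since it appears squared --- reflecting the two independent shear polarisations carried by $\psi_2$ and $\psi_3$, which decouple in the boundary conditions --- each such $\Lambda$ carries, per lattice mode, multiplicity $2$, yielding series (iii) with total multiplicity $2r_2(n)$. The factor $\sin\!\big(h\sqrt{\Lambda/(\lambda+2\mu)-n}\big)$ vanishes when $\Lambda=(n+k^2\pi^2/h^2)(\lambda+2\mu)$, the purely longitudinal family, giving series (v) with multiplicity $r_2(n)$. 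The two linear factors $(\Lambda/\mu-n)$ and $(\Lambda/(\lambda+2\mu)-n)$ correspond to the threshold values $\Lambda=n\mu$ and $\Lambda=n(\lambda+2\mu)$, at which the relevant $x^3$-wavenumber degenerates to zero and the two exponentials collapse; here I would exhibit explicitly the surviving eigenfunction (constant, or affine, in $x^3$) satisfying the FD conditions and verify it is nontrivial, producing series (ii) and (iv) with multiplicity $r_2(n)$ each. The prefactors $\Lambda^2$ and $n^2$ are spurious for $n\ge1$ and must be discarded after checking that they force $\bu\equiv 0$.

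I would then treat the degenerate mode $n=0$ (i.e. $\xi_1=\xi_2=0$) separately, where the $\curl$-constructions in \eqref{solutions 3D cylinder} collapse and the displacement depends on $x^3$ alone; solving the resulting one-dimensional elasticity problem with FD conditions directly yields the longitudinal standing waves $\Lambda=k^2\pi^2/h^2\,(\lambda+2\mu)$ of series (i), each of multiplicity $1$. Finally I would assemble the count: summing the per-mode multiplicities over all $(\xi_1,\xi_2)$ with $\xi_1^2+\xi_2^2=n$ introduces the arithmetic weight $r_2(n)$, and one verifies that the families (i)--(v) exhaust all solutions of \eqref{secular equation 3d cylinder FD} corresponding to genuine square-integrable eigenfunctions. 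The comparison with Theorem~\ref{theorem DF spectrum 3d cylinder} is then immediate: the FD secular equation carries the additional factor $(\Lambda/(\lambda+2\mu)-n)$, which is exactly what introduces the extra series (iv), absent in the DF spectrum.

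The main obstacle is the multiplicity bookkeeping rather than the algebra of the secular equation: the factored form of \eqref{secular equation 3d cylinder FD} signals where nontrivial kernels occur but does not by itself record the dimension of each eigenspace. Establishing the multiplicities $2r_2(n)$ versus $r_2(n)$ --- and in particular distinguishing the genuine threshold eigenfunctions at $\Lambda=n\mu$ and $\Lambda=n(\lambda+2\mu)$ from spurious roots of the secular polynomial --- requires a direct dimension count of the kernel of the boundary-condition map for each $\Lambda$, together with careful separate handling of the degenerate mode $n=0$.
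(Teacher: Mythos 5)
Your route is the same as the paper's: the paper derives the secular equation \eqref{secular equation 3d cylinder FD} from the potential ansatz \eqref{solutions 3D cylinder}--\eqref{general solution psi 3d cylinder} and then states that a ``direct examination'' of its solutions yields the theorem, so your factor-by-factor analysis with explicit kernel-dimension counts is precisely the (unwritten) argument, carried out in more detail. Your treatment of the modes with $n=\xi_1^2+\xi_2^2\ge 1$ is sound: the SH polarisation (from $\psi_2$) decouples from the P--SV pair (from $\psi_1,\psi_3$), and under FD conditions the SV and P parts further decouple at the relevant values of $\Lambda$, which is what produces the multiplicity $2r_2(n)$ for \eqref{FD series 3} versus $r_2(n)$ for \eqref{FD series 2}, \eqref{FD series 4} and \eqref{FD series 5}; the constant-in-$x^3$ threshold eigenfunctions at $\Lambda=n\mu$ and $\Lambda=n(\lambda+2\mu)$ are genuine, as you indicate.

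The one genuine gap is in your handling of $n=0$. You assert that the one-dimensional FD elasticity problem in the variable $x^3$ ``directly yields the longitudinal standing waves'' of series \eqref{FD series 1} only. It does not: for $\bu=\bu(x^3)$ the FD conditions read $u_3(0)=u_3(h)=0$ together with $u_1'(0)=u_1'(h)=u_2'(0)=u_2'(h)=0$, so besides $u_3=\sin(k\pi x^3/h)$ with $\Lambda=(\lambda+2\mu)k^2\pi^2/h^2$ one also finds the transverse standing waves $u_{1,2}\propto\cos(k\pi x^3/h)$ with $\Lambda=\mu k^2\pi^2/h^2$ and multiplicity $2$ (including the two rigid translations at $\Lambda=0$). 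These are invisible to the ansatz \eqref{solutions 3D cylinder} because $\curl(\psi\,\hat{\mathbf{z}})$ and $\curl\curl(\psi\,\hat{\mathbf{z}})$ vanish identically for potentials depending on $x^3$ alone, and they are also absent from the list you are trying to prove exhaustive. So if you solve the $n=0$ problem ``directly,'' as you propose, you will produce eigenvalues not on the list, and your writeup must confront this rather than assert agreement with item (i). Since the number of such eigenvalues below $\Lambda$ is $O(\Lambda^{1/2})=o(\Lambda)$, this does not affect the two-term asymptotics in Proposition~\ref{prop asymptotic expansions 3d cylinder}, but it does affect the literal claim that series (i)--(v) exhaust the FD spectrum, and any proof of the statement as written has to resolve this point explicitly.
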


Observe that the DF (Theorem~\ref{theorem DF spectrum 3d cylinder}) and the FD (Theorem~\ref{theorem FD spectrum 3d cylinder}) spectra coincide, except for the additional series of eigenvalues~\eqref{FD series 4} in the FD spectrum.

Theorems~\ref{theorem DF spectrum 3d cylinder} and~\ref{theorem FD spectrum 3d cylinder} allow us to write down the eigenvalue counting functions $N_{\mathrm{DF}}$ and $N_{\mathrm{FD}}$ explicitly. They read
\begin{multline}
\label{N DF 3d cylinder}
N_{\mathrm{DF}}(\Lambda)
=
\left\lfloor \frac{h}{\pi}\sqrt{\frac{\Lambda}{\lambda+2\mu}} \right\rfloor
+
\sum_{n=1}^{\left\lfloor \frac{\Lambda}{\mu}\right\rfloor} r_2(n) \left( 2\left\lfloor\frac{h}{\pi}\sqrt{\frac{\Lambda}{\mu}- n} \right\rfloor+1\right)
\\+
\sum_{n=1}^{\left\lfloor \frac{\Lambda}{\lambda+2\mu} \right\rfloor} r_2(n) \left\lfloor\frac{h}{\pi}\sqrt{\frac{\Lambda}{\lambda+2\mu}- n} \right\rfloor \,,
\end{multline}
\begin{equation}
\label{N FD 3d cylinder}
N_{\mathrm{FD}}(\Lambda)
=
N_{\mathrm{DF}}(\Lambda)
+
\sum_{n=1}^{\left\lfloor \frac{\Lambda}{\lambda+2\mu} \right\rfloor} r_2(n).
\end{equation}

Let us verify formula~\eqref{main theorem equation 1} by computing the asymptotic expansions of~\eqref{N FD 3d cylinder} and~\eqref{N DF 3d cylinder} as $\Lambda \to +\infty$.

\begin{prop}
\label{prop asymptotic expansions 3d cylinder}
The functions \eqref{N FD 3d cylinder} and \eqref{N DF 3d cylinder} admit the following two-term asymptotic expansion:
\begin{equation}
\label{prop asymptotic expansions 3d cylinder equation 1}
N_{\aleph}(\Lambda)=\frac{2h}3 \left(\frac{2}{\mu^{3/2}}+\frac{1}{(\lambda+2\mu)^{3/2}}\right) \Lambda^{3/2}
\mp
\frac{\pi}{2(\lambda+2\mu)}\Lambda+o(\Lambda) \quad \text{as}\quad \Lambda\to +\infty
\end{equation}
with sign
\begin{equation}
\label{prop asymptotic expansions 3d cylinder equation 2}
\begin{cases}
- & \text{for}\ \,\aleph=\mathrm{DF},\\
+ & \text{for}\ \,\aleph=\mathrm{FD}.
\end{cases}
\end{equation}
\end{prop}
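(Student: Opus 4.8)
The plan is to derive the two-term asymptotics directly from the explicit counting functions \eqref{N DF 3d cylinder} and \eqref{N FD 3d cylinder}, reducing everything to the asymptotic analysis of a single arithmetic sum. Writing $a:=h/\pi$, $X_1:=\Lambda/\mu$ and $X_2:=\Lambda/(\lambda+2\mu)$, the heart of the matter is the \emph{master sum}
\[
\Sigma(X):=\sum_{n=1}^{\lfloor X\rfloor} r_2(n)\left\lfloor a\sqrt{X-n}\,\right\rfloor,
\]
in terms of which \eqref{N DF 3d cylinder} reads $N_{\mathrm{DF}}(\Lambda)=2\Sigma(X_1)+R(X_1)+\Sigma(X_2)+O(\Lambda^{1/2})$, where $R(X):=\sum_{n=1}^{\lfloor X\rfloor}r_2(n)$ comes from the ``$+1$'' term and the $O(\Lambda^{1/2})$ absorbs $\lfloor a\sqrt{X_2}\rfloor$, while $N_{\mathrm{FD}}=N_{\mathrm{DF}}+R(X_2)$. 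The first step is to establish the estimate
\[
\Sigma(X)=\frac{2\pi a}{3}\,X^{3/2}-\frac{\pi}{2}\,X+o(X)\qquad\text{as }X\to+\infty .
\]

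I would prove this via a lattice-point interpretation rather than through equidistribution of fractional parts. Indeed, $\Sigma(X)$ counts exactly the lattice points $(p,q,k)\in\mathbb{Z}^3$ with $(p,q)\neq(0,0)$, $k\ge 1$, lying in the solid ellipsoid $p^2+q^2+(k/a)^2\le X$. Using the reflection symmetry $k\mapsto -k$ and peeling off the slice $k=0$, I would relate $\Sigma(X)$ to the \emph{full} count $\Phi(X):=\#\{(p,q,k)\in\mathbb{Z}^3:\ p^2+q^2+(k/a)^2\le X\}$ by
\[
\Sigma(X)=\tfrac12\bigl(\Phi(X)-\#\{(p,q)\in\mathbb{Z}^2:p^2+q^2\le X\}\bigr)+O(X^{1/2}),
\]
the $O(X^{1/2})$ absorbing the column $(p,q)=(0,0)$, which contributes $\lfloor a\sqrt X\rfloor$. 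The slice is governed by the Gauss circle estimate $\#\{(p,q):p^2+q^2\le X\}=\pi X+o(X)$, which supplies the $-\tfrac{\pi}{2}X$ term, while the volume of the ellipsoid supplies the leading $\tfrac{2\pi a}{3}X^{3/2}$.

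The crucial --- and only genuinely delicate --- ingredient is the bound $\Phi(X)=\tfrac{4}{3}\pi a\,X^{3/2}+o(X)$, i.e. that the number of lattice points in the dilated ellipsoid equals its volume with error $o(X)$. The naive bound obtained by counting lattice points within bounded distance of the boundary surface is only $O(X)$, which is insufficient; one must invoke a genuine lattice-point theorem for smooth, strictly convex bodies in $\mathbb{R}^3$ (of van der Corput--Hlawka type), which, thanks to the nonvanishing Gaussian curvature of the ellipsoid, yields an error $O(X^{3/4})=o(X)$. This is the step I expect to be the main obstacle, and the one requiring the most care to cite or reproduce at the appropriate level of rigour.

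With the master estimate in hand, the conclusion is a short computation. Substituting $\Sigma(X_1)$, $\Sigma(X_2)$ and $R(X_1)=\pi X_1+o(X_1)$ into the expression for $N_{\mathrm{DF}}$, the order-$\Lambda$ contributions organise themselves as $-\pi X_1$ (from $2\Sigma(X_1)$), $+\pi X_1$ (from $R(X_1)$), and $-\tfrac{\pi}{2}X_2$ (from $\Sigma(X_2)$); the two $X_1=\Lambda/\mu$ terms cancel, leaving only $-\tfrac{\pi}{2}X_2=-\tfrac{\pi}{2(\lambda+2\mu)}\Lambda$. This cancellation is precisely the arithmetic shadow of the fact that the second Weyl coefficient \eqref{main theorem equation 1} carries no $\mu^{-1}$-type contribution at this order when $d=3$. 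The leading term reassembles into $\tfrac{2h}{3}\bigl(2\mu^{-3/2}+(\lambda+2\mu)^{-3/2}\bigr)\Lambda^{3/2}$, in agreement with Weyl's law, and the FD case follows by adding $R(X_2)=\pi X_2+o(X_2)$, which flips the sign of the second term. This yields \eqref{prop asymptotic expansions 3d cylinder equation 1}--\eqref{prop asymptotic expansions 3d cylinder equation 2}.
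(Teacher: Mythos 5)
Your proposal is correct, and its overall architecture coincides with the paper's: both reduce the claim to the Gauss circle estimate $\sum_{n\le x}r_2(n)=\pi x+o(x)$ and to the key estimate $\sum_{n=1}^{\lfloor x\rfloor}r_2(n)\lfloor a\sqrt{x-n}\rfloor=\tfrac{2\pi a}{3}x^{3/2}-\tfrac{\pi}{2}x+o(x)$, and then conclude by direct substitution into \eqref{N DF 3d cylinder} and \eqref{N FD 3d cylinder}; your bookkeeping of the order-$\Lambda$ terms (the cancellation of the two $\pm\pi\Lambda/\mu$ contributions for DF, and the sign flip upon adding $R(X_2)$ for FD) is exactly right. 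The difference is that the paper simply asserts the key estimate without proof, whereas you supply one: the identification of the sum with the count of lattice points in the half-ellipsoid $p^2+q^2+(k/a)^2\le X$, $k\ge 1$, $(p,q)\ne(0,0)$, the reflection-and-slice identity $\Sigma(X)=\tfrac12\bigl(\Phi(X)-\#\{p^2+q^2\le X\}\bigr)-\lfloor a\sqrt X\rfloor$, and the appeal to a Hlawka/van der Corput lattice-point theorem for convex bodies with nonvanishing Gaussian curvature (giving $\Phi(X)=\tfrac43\pi aX^{3/2}+O(X^{3/4})$) are all correct, and you rightly flag that the naive boundary-layer bound $O(X)$ would not suffice, so citing a genuine second-order lattice-point result is unavoidable at this step. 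In short, your argument proves strictly more than the paper writes down, while arriving at the same two-term expansion.
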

\begin{proof}
Formula~\eqref{prop asymptotic expansions 3d cylinder equation 2} follows from \eqref{N FD 3d cylinder}, \eqref{N FD 3d cylinder}, and the estimates
\begin{equation*}
\label{gauss circle problem}
\sum_{n=1}^{\lfloor x \rfloor} r_2(n)=\pi x+ O(x^{1/3}) \quad \text{as}\quad x\to+\infty,
\end{equation*}
\begin{equation*}
\label{tricky estimate}
\sum_{n=1}^{\lfloor x \rfloor}\left\lfloor a \sqrt{x-n} \right\rfloor r_2(n)=\frac{2\pi a}{3}x^{3/2}-\frac\pi2x+ o(x) \quad \text{as}\quad x\to+\infty, \quad a>0.
\end{equation*}
\end{proof}

On account of~\eqref{volume surface area 3d cylnder}, Proposition~\ref{prop asymptotic expansions 3d cylinder} agrees with Theorem~\ref{main theorem} as well as formula~\eqref{DF less FD}.

\

Figures~\ref{fig:cyl3DN} and~\ref{fig:cyl3ND} show a comparison between the actual counting functions~\eqref{N DF 3d cylinder}, \eqref{N FD 3d cylinder} and the two-term asymptotic expansions~\eqref{prop asymptotic expansions 3d cylinder equation 1}, \eqref{prop asymptotic expansions 3d cylinder equation 2}.

\begin{figure}[htb] 
\begin{center}
\includegraphics[width=\textwidth]{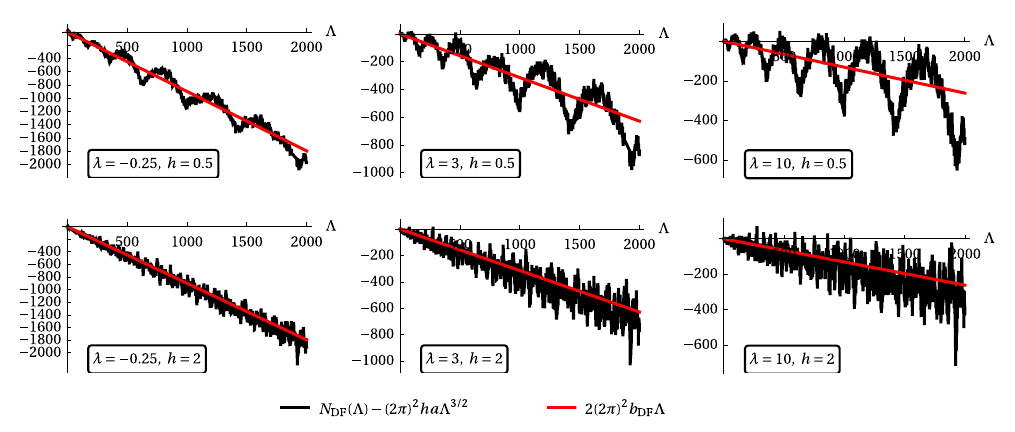}
\caption{The DF eigenvalue problem for 3D flat cylinders. In all images $\mu=1$.}\label{fig:cyl3DN}
\end{center}
\end{figure}

\begin{figure}[htb] 
\begin{center}
\includegraphics[width=\textwidth]{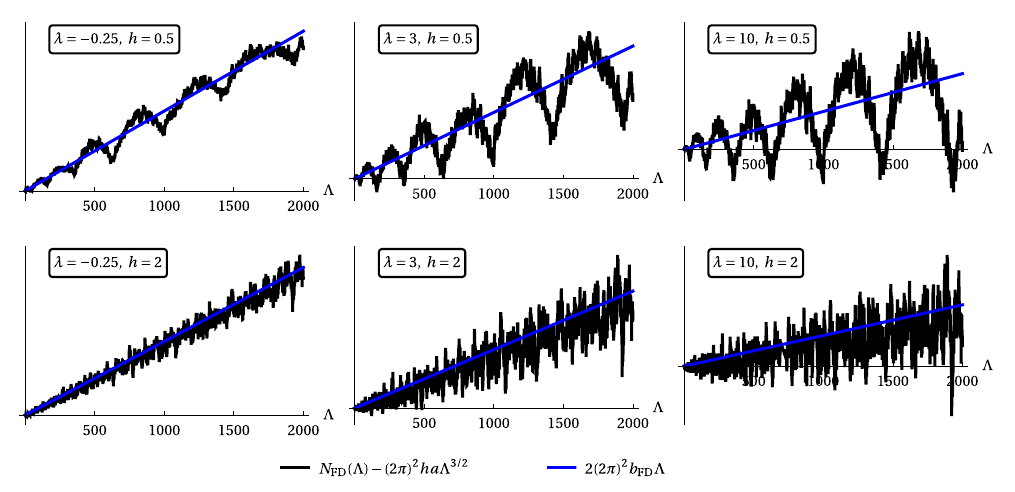}
\caption{The FD eigenvalue problem for 3D flat cylinders. In all images $\mu=1$.}\label{fig:cyl3ND}
\end{center}
\end{figure}

\section*{Acknowledgements}
\addcontentsline{toc}{section}{Acknowledgements}

We are indebted to Michael Levitin, Yiannis Petridis and Dmitri Vassiliev for insightful conversations on aspects of this paper, and to Gerd Grubb and Grigori Rozenblum for useful bibliographic suggestions.

MC was partially supported by a grant of the Heilbronn Institute for Mathematical Research (HIMR) via the UKRI/EPSRC and by EPSRC Fellowship EP/X01021X/1. 
IM was supported by a MAC-MIGS Summer Internship (MAC-MIGS CDT, Maxwell Institute Graduate School, Edinburgh).

\begin{appendices} 
%

\end{appendices}

\end{document}